\pdfoutput=1
\RequirePackage{ifpdf}
\ifpdf 
\documentclass[pdftex]{sigma}
\else
\documentclass{sigma}
\fi

%
%
%
\newcommand{\so}{\scriptscriptstyle \rm I}
\newcommand{\st}{\scriptscriptstyle \rm I\hspace{-1pt}I}
\newcommand{\sth}{\scriptscriptstyle \rm I\hspace{-1pt}I\hspace{-1pt}I}
\newcommand{\sso}{\scriptscriptstyle \rm i}
\newcommand{\sst}{\scriptscriptstyle \rm i\hspace{-1pt}i}


\newcommand{\bu}{\bar u}
\newcommand{\bv}{\bar v}

\newcommand{\bt}{{\bar t}}
\newcommand{\btt}{{\bar\tau}}

\newcommand{\bw}{\bar w}

\newcommand{\be}[1]{\begin{equation}\label{#1}}
\newcommand{\ba}[1]{\begin{multline}\label{#1}}
\newcommand{\ee}{\end{equation}}
\newcommand{\ea}{\end{multline}}

\newtheorem{thm}{Theorem}[section]
\newtheorem{prop}[thm]{Proposition}
\newtheorem{lem}[thm]{Lemma}
\newtheorem{cor}[thm]{Corollary}

\theoremstyle{definition}
\newtheorem{rem}[thm]{Remark}
\newtheorem{exam}[thm]{Example}
 \makeatletter
 \@addtoreset{equation}{section}
 \makeatother
 
\newcommand{\bea}{\begin{eqnarray}}
\newcommand{\eea}{\end{eqnarray}}


\def\BB{{\mathbb{B}}}
\def\CC{{\mathbb{C}}}
\def\hBB{\hat{\mathbb{B}}}

\def\ZZ{{\mathbb Z}}
\def\TT{{\mathbb{T}}}

\def\rvac{|0\rangle}

\def\Ee{{\sf e}}

\def\r#1{\eqref{#1}}
\def\sk#1{\left(#1\right)}

\def\fgo{\mathfrak{f}}

\def\ot{\otimes}

\def\sT{{\sf T}}

\def\Ig{I_{\mathfrak{g}}}

\def\rvec{|0\rangle}
\def\lvec{\langle0|}

\def\hc{{\sf h}}

\def\BBo{\mathbb{B}_{\mathfrak{o}}}
\def\BBg{\mathbb{B}_{\mathfrak{gl}}}
\def\cBo{\mathcal{B}_{\mathfrak{o}}}
\def\cBg{\mathcal{B}_{\mathfrak{gl}}}

\def\vk{\chi}
\def\Z{\mathcal{Z}}
\def\Oml{\Omega^L}
\def\Omr{\Omega^R}
\def\Gml{\Gamma^L}
\def\Gmr{\Gamma^R}

\def\eigen{\tau}
\def\lp{{\ell'}}
\def\kp{{k'}}
\def\ip{{i'}}
\def\jp{{j'}}

\def\hlam{\hat{\lambda}}
\def\halp{\hat{\alpha}}

\begin{document}

\allowdisplaybreaks

\newcommand{\arXivNumber}{2412.05224}

\renewcommand{\PaperNumber}{078}

\FirstPageHeading

\ShortArticleName{Rectangular Recurrence Relations in $\mathfrak{gl}_{n}$ and $\mathfrak{o}_{2n+1}$ Invariant Integrable Models}

\ArticleName{Rectangular Recurrence Relations in $\boldsymbol{\mathfrak{gl}_{n}}$\\ and $\boldsymbol{\mathfrak{o}_{2n+1}}$ Invariant Integrable Models}

\Author{Andrii LIASHYK~$^{\rm a}$, Stanislav PAKULIAK~$^{\rm b}$ and Eric RAGOUCY~$^{\rm b}$}

\AuthorNameForHeading{A.~Liashyk, S.~Pakuliak and E.~Ragoucy}

\Address{$^{\rm a)}$~Beijing Institute of Mathematical Sciences and Applications (BIMSA),\\
\hphantom{$^{\rm a)}$}~No.~544, Hefangkou Village Huaibei Town, Huairou District Beijing 101408, P.R.~China}
\EmailD{\href{mailto:liashyk@bimsa.cn}{liashyk@bimsa.cn}}

\Address{$^{\rm b)}$~Laboratoire d'Annecy-le-Vieux de Physique Th\'eorique (LAPTh),\\
\hphantom{$^{\rm b)}$}~Chemin de Bellevue, BP 110, F-74941, Annecy-le-Vieux Cedex, France}
\EmailD{\href{mailto:pakuliak@lapth.cnrs.fr}{pakuliak@lapth.cnrs.fr}, \href{mailto:ragoucy@lapth.cnrs.fr}{ragoucy@lapth.cnrs.fr}}

\ArticleDates{Received February 25, 2025, in final form September 01, 2025; Published online September 21, 2025}

\Abstract{A new method is introduced to derive general recurrence relations for off-shell Bethe vectors in quantum integrable models with either type $\mathfrak{gl}_n$ or type $\mathfrak{o}_{2n+1}$ symmetries. These recurrence relations describe how to add a single parameter $z$ to specific subsets of Bethe parameters, expressing the resulting Bethe vector as a linear combination of monodromy matrix entries that act on Bethe vectors which do not depend on $z$. We refer to these recurrence relations as \textit{rectangular} because the monodromy matrix entries involved are drawn from the upper-right rectangular part of the matrix. This construction is achieved within the framework of the zero mode method.}

\Keywords{Yangians; recurrence relations for Bethe vectors; nested algebraic Bethe ansatz}

\Classification{82B23; 81R12; 17B37; 17B80}

\section{Introduction}

We will consider generic $\mathfrak{g}$-invariant quantum integrable models
in the framework of the algebraic Bethe ansatz \cite{Fad96}. In such models the
monodromy matrix depends on a spectral parameter and satisfies the $RTT$ relations \cite{FRT} with a $\mathfrak{g}$-invariant
$R$-matrix. In the Hilbert space of physical states,
one can always construct from the local operators of the model a basis of states
 which are eigenvectors of a set of commuting Hamiltonians. These states form a representation of the finite-dimensional Lie
algebra $\mathfrak{g}$. In the algebraic
Bethe ansatz, they are constructed from the monodromy matrix entries
which depend on spectral parameters satisfying the Bethe equations.
These states are called on-shell Bethe vectors. When the spectral parameters
are generic (not forced to obey the Bethe equations), the Bethe vectors are called off-shell and their combinatorial properties
are defined solely by the $RTT$ relation with a given $R$-matrix.
As a~consequence, one can replace the monodromy matrix of a generic
model by the fundamental $T$-operator of the Yangian $Y(\mathfrak{g})$
in its matrix realization \cite{Drinfeld,Molev}.
The commutation relation for this $T$-operator~$T(u)$,
which depends on a formal spectral parameter $u$, coincides with
the commutation relations of monodromy matrix in a generic $\mathfrak{g}$-invariant
integrable model and the generators of the finite dimensional symmetry
may be identified with the zero modes of the matrix entries~$T_{i,j}(u)$
of the Yangian fundamental $T$-operator. Below we will explore the
Yangian fundamental $T$-operator~$T(u)$ calling it the monodromy
matrix of a generic $\mathfrak{g}$-invariant integrable model.\looseness=-1

One of the key problems in quantum integrable models is the presentation
of the Bethe vector scalar products in a determinant form.
For periodic boundary conditions, this problem was
investigated and fully solved in \cite{S-Det89}
for the integrable system associated with the simplest Lie algebra $\mathfrak{gl}_2$,
where the structure of the Bethe vectors is quite simple.

In the models associated to higher rank symmetries the structure of the Bethe vectors
is rather complicated. In principle, the nested Bethe ansatz (\cite{KR1, KR2} for $\mathfrak{gl}_n$ and \cite{Res91} for $\mathfrak{o}_n$) ensures that the
Bethe vectors can be expressed as a combinatorial expression
of the monodromy matrix entries acting on a vacuum vector.
However, whether there is a determinant form for scalar products of Bethe vectors in the general case remains an open question and has answers only in the cases $\mathfrak{gl}_3$ \cite{BPRS12} and \smash{$\mathfrak{gl}_{2|1}$} \cite{HLPRS16}.

It is worth noting that there is another method for studying eigenvectors and correlation functions in integrable systems, which is based on the so-called quantum separation of variables (SoV) method \cite{Sk90, Sk95}. Recently,
for models associated with the Lie algebra $\mathfrak{gl}_n$ was proposed~\cite{GLMS17} another construction to describe eigenvector using only one creation operator $B(u)$ closely related to SoV.
A little later, significant progress has been made in SoV method \cite{MN18}, where it was proposed to describe vectors in terms of the actions of transfer matrices on a certain vector, which is a fairly universal construction.
Both groups develop their methods significantly~\cite{GLMR20, MNV20, RV21} and managed to use this result to describe some overlaps of Bethe vectors.
However, the study of correlation functions in this approach is far from being complete and, up to now, applies mostly to $\mathfrak{gl}_n$ models.

On the other hand, if one can find formulas for the action of the
monodromy matrix entries on the off-shell Bethe vectors, as well as recurrence relations
for them, then one can find expressions for the scalar products of off-shell Bethe vectors, and ultimately obtain recurrence relations for the building blocks
of these scalar products. Expressions for the scalar product and the norm of Bethe vectors were obtained in \cite{IzeK84,Kor82} for $\mathfrak{gl}_2$ invariant models and in \cite{Res86} for $\mathfrak{gl}_3$ invariant models.
General expressions for the scalar product, the norm and the recurrence relations were achieved in \cite{HLPRS-SP17,HLPRS-SP18} for models associated to $\mathfrak{gl}(m|n)$.
Here by recurrence relations for the off-shell Bethe
vectors we mean the possibility to express a Bethe vector
with an extended set of Bethe parameters (for example, with an extension by a parameter $z$)
as a linear combination of the action of monodromy matrix entries $T_{i,j}(z)$ with
$i<j$ acting on off-shell Bethe vectors which do not depend on $z$.

The action formulas of the monodromy entries on the off-shell Bethe vectors
in $\mathfrak{gl}(m|n)$- and $\mathfrak{o}_{2n+1}$-invariant integrable models
were given in \cite{HLPRS17b} and \cite{LP2}, respectively.
The existence of recurrence relations is less investigated. Some examples
of such relations can be found in the aforementioned papers. One of
the goals of this paper is to fill this gap and produce all possible
recurrence relations for the Bethe vectors in the models
with $\mathfrak{gl}_{n}$ and $\mathfrak{o}_{2n+1}$ symmetries.

In the present paper, we show that in order to solve this problem, one has to combine a~single
(simple) action of the monodromy matrix entry with the action of the zero modes of the
monodromy matrix, the latter being identified with the simple root generators of the Lie algebra
$\mathfrak{g}$. We first perform the calculation of the recurrence relations
using this method for the Bethe vectors in $\mathfrak{gl}_n$-invariant models
and then extend this approach to the Bethe vectors in
$\mathfrak{o}_{2n+1}$-invariant models. The obtained recurrence relations will
be then tested in several limiting cases to observe the reductions over rank of
$\mathfrak{g}$ and embeddings of $Y(\mathfrak{gl}_{\ell})\otimes
Y(\mathfrak{gl}_{n-\ell})$ into $Y(\mathfrak{gl}_{n})$ and
 of $Y(\mathfrak{o}_{2\ell+1})\otimes Y(\mathfrak{gl}_{n-\ell})$
into $Y(\mathfrak{o}_{2n+1})$.

This paper is motivated by two key objectives. First, we would like to
generalize the nested Bethe ansatz to the cases when not only extreme nodes
of the Dynkin diagram are singularized, but also when the singularized node is inside the
Dynkin diagram. In the case of the $\mathfrak{gl}_n$ algebra, some progress in this direction was made in the recent papers \cite{KT1,KT2} in the framework of the trace formula presentation of off-shell Bethe vectors. To the best of our knowledge,
for other series, the nested Bethe ansatz method was developed
 only in the cases when the singularized node is an extreme node of $\mathfrak{gl}_n$-type.
 In our approach, the recurrence relations for the off-shell Bethe vectors in
 $\mathfrak{o}_{2n+1}$-invariant integrable models are generalization of the
 nested Bethe ansatz for arbitrary singularized node in the Dynkin diagram
 of Lie algebra $\mathfrak{o}_{2n+1}$. The second motivation is to develop
 methods to investigate the recurrence relations and analytical properties
 of the highest coefficients in the Bethe vector scalar product in
 $\mathfrak{g}$-invariant integrable models. An application of our approach
 to the properties of highest coefficients in $\mathfrak{o}_{2n+1}$-invariant
 integrable models can be found in the paper \cite{LPR25}.

Theorems~\ref{rec-pr} and \ref{main-th} are the main results of the paper.
It is surprising that these rectangular relations were not known previously,
although some examples of these recurrence relations for the Bethe
vectors in $\mathfrak{gl}_n$-invariant integrable models were obtained earlier
in \cite{G23,HLPRS-SP17}.

The plan of the paper is as follows. In Section \ref{sect:prelim}, we recall some basic algebraic notions
on Yangians, which are at the core of the models under consideration, as well as some definitions on Bethe vectors.
Section \ref{sect:gln} deals with the rectangular recurrence relations for $\mathfrak{gl}_n$ models. Here we verify also the consistency of the recurrence
relations with the embedding $Y(\mathfrak{gl}_{\ell})\otimes
Y(\mathfrak{gl}_{n-\ell})$ into~$Y(\mathfrak{gl}_{n})$ and compare our approach with
the recent papers \cite{KT1,KT2}.
The case of $\mathfrak{o}_{2n+1}$ is studied in Section \ref{sect:oN}. The main result is contained in Section \ref{sect:rec-oN}, where
rectangular recurrence relations for $\mathfrak{o}_{2n+1}$ models are presented.
Some particular cases and examples are displayed in Section \ref{sect:ex-rec}.
We also show that the recurrence relations are consistent with the embedding of $Y(\mathfrak{gl}_n)$
or $Y(\mathfrak{o}_{2a+1})\otimes Y(\mathfrak{gl}_{n-a})$ in
$Y(\mathfrak{o}_{2n+1})$: the Bethe vectors exhibit a nice factorisation property over these subalgebras (see Sections
\ref{sect:red-gln} and \ref{sect:red-o.x.gl}). We conclude in Section \ref{conclu}, and two appendices are devoted to the technical proofs.

\section{Preliminaries\label{sect:prelim}}

\subsection{Algebraic context}
Let $\mathfrak{g}$ be either the classical Lie algebra $\mathfrak{gl}_n$ or the
orthogonal algebra $\mathfrak{o}_{2n+1}$,
where $n=2,3,\dots$.
We will use the set of positive integers
$I_{\mathfrak{gl}_n}=\{1,\dots,n\}$ to index elements of the operators in~$\operatorname{End}({\CC}^{n})$ and the set of integers
$I_{\mathfrak{o}_{2n+1}}=\{-n,-n+1,\dots,-1,0,1,2,\dots,n\}$
to index elements of the operators in $\operatorname{ End}\bigl({\CC}^{2n+1}\bigr)$.
 We will use the notation $\Ig$ to describe the two sets of indices simultaneously.
Let $N=n$ and $N=2n+1$ for the algebras $\mathfrak{gl}_n$ and $\mathfrak{o}_{2n+1}$
respectively.

{\bf $\boldsymbol{RTT}$ presentation of the Yangians $\boldsymbol{Y(\mathfrak{gl}_n)}$ and $\boldsymbol{Y(\mathfrak{o}_{2n+1})}$.}
Let $R_{\mathfrak{g}}(u,v)$ be the $\mathfrak{g}$-invariant $R$-matrix \cite{Yang,ZZ79}
\begin{equation}\label{R-mat}
 R_{\mathfrak{g}}(u,v) = \mathbf{I}\otimes\mathbf{I} + \frac{c \mathbf{P}}{u-v} - \frac{c \mathbf{Q}}{u-v+c\kappa_n} ,
\end{equation}
where $\mathbf{I}=\sum_{i\in\Ig}\Ee_{i,i}$ is the identity operator acting in the space $\CC^{N}$ and $\Ee_{i,j}\in\operatorname{ End}\bigl(\CC^N\bigr)$ are~${N\times N}$ matrices with the only nonzero entry equal to 1 at
the intersection of the $i$-th row and $j$-th column. The operators $\mathbf{P}$
and $\mathbf{Q}$ act in
$\CC^{N}\ot\CC^{N}$. They read
\begin{equation*}\label{PQ}
\mathbf{P}=\sum_{i,j\in \Ig}\Ee_{i,j}\otimes\Ee_{j,i}, \qquad
\mathbf{Q}=\sum_{i,j\in \Ig}\Ee_{-i,-j}\otimes\Ee_{i,j} ,
\end{equation*}
and\footnote{The value of $\kappa_n=\infty$ for the
algebra $\mathfrak{gl}_n$ simply means that in this case the
$\mathfrak{g}$-invariant $R$-matrix does not contain the term
proportional to operator $\mathbf{Q}$ and so coincides with the Yang $R$-matrix
\cite{Yang}. The parameter $\kappa_n=n-1/2$ is relevant only for the algebra
$\mathfrak{g}=\mathfrak{o}_{2n+1}$.\label{fn:kappa}}
\[
\kappa_n=
\begin{cases}
\infty &\text{for } \mathfrak{g}=\mathfrak{gl}_{n} ,\\
n-1/2&\text{for } \mathfrak{g}=\mathfrak{o}_{2n+1} .
\end{cases}
\]

In the $\mathfrak{o}_{2n+1}$-case, for any matrix $M\in\operatorname{ End}\bigl(\CC^{2n+1}\bigr)$, we denote by $M^{\rm t}$ the transposition with respect to the secondary diagonal
\smash{$
\bigl(M^{\rm t}\bigr)_{i,j}=M_{-j,-i} $}.
In particular, $\mathbf{Q} = \mathbf{P}^{{\rm t}_{1}} = \mathbf{P}^{{\rm t}_{2}}$.

The generic $\mathfrak{g}$-invariant integrable model is described by the monodromy
matrix $T(u)$ which depends on the spectral parameter $u$ and satisfies the commutation
relation
\begin{equation}\label{RTT}
 R_{\mathfrak{g}}(u,v) \left( T(u)\otimes\mathbf{I} \right)
 \left( \mathbf{I}\otimes T(v) \right) =
 \left( \mathbf{I}\otimes T(v) \right) \left( T(u)\otimes\mathbf{I} \right)
 R_{\mathfrak{g}}(u,v) ,
\end{equation}
with the $\mathfrak{g}$-invariant $R$-matrix \r{R-mat}.

Equation \eqref{RTT} yields the commutation relations of the monodromy matrix entries
\begin{align}
 [ T_{i,j}(u), T_{k,l}(v) ] ={}&
 \frac{c}{u-v}( T_{k,j}(v)T_{i,l}(u) - T_{k,j}(u) T_{i,l}(v) )\nonumber\\
 &+\frac{c}{u-v+c\kappa_n}\sum_{p=-n}^n(\delta_{k,-i}
 T_{p,j}(u)T_{-p,l}(v)-
 \delta_{l,-j} T_{k,-p}(v)T_{i,p}(u)).\label{rtt}
\end{align}
The parameter $c$ in \r{R-mat} and \r{rtt} is a Yangian deformation parameter.
In the limit $c\to 0$, the Yangian goes to the Borel subalgebra of the
corresponding
loop algebra. When non-zero, this parameter can be always changed to the value $c=1$
by rescaling formal spectral parameters $u$ and $v$ but for our convenience we prefer to keep it arbitrary.
Note that the second line in \eqref{rtt} occurs only in the case of $\mathfrak{o}_{2n+1}$ models, in accordance with the footnote \ref{fn:kappa}.
We will assume the following dependence of the monodromy matrix on the
spectral parameter
\begin{equation}\label{genser}
T_{i,j}(u)=\vk_i \delta_{ij}+\sum_{m\geq0} T_{i,j}[m](u/c)^{-m-1} ,
\end{equation}
where the parameters $\vk_i$, $i\in\Ig$, are twisting parameters.
For $\mathfrak{g}=\mathfrak{gl}_N$, these parameters are all independent, while for $\mathfrak{g}=\mathfrak{o}_{2n+1}$ they satisfy the relation
$\vk_i\vk_{-i}=1$ and $\vk_0 = 1$.
For both algebras, the possibility to start
the series expansion \r{genser}
with $\chi_i \delta_{ij}$ instead of $\delta_{ij}$
corresponds to the usual twist $T(z)\to D T(z)$ where $D$ is the diagonal matrix with $D_{ii}=\chi_i$.
These twisting maps are isomorphisms between two RTT-algebras, since the $R$-matrix commutes with $D\otimes D$ (with the restriction $D^{\rm t} = D$ in the $\mathfrak{o}_{2n+1}$ case).

{\bf Central element for $\boldsymbol{\mathfrak{g}=\mathfrak{o}_{2n+1}}$.}
Remark that for $\mathfrak{g}=\mathfrak{o}_{2n+1}$,
the pole at $u=v-c\kappa_n$ in the $R$-matrix \r{R-mat}
and in the commutation relation \r{RTT} implies that the monodromy matrix should satisfy following relations \cite{JLM18}
$
T(z)^{\rm t}\cdot T(z+c\kappa_n)= T(z+c\kappa_n)\cdot T(z)^{\rm t}=
\mathcal{C}(z)= \mathbf{I}$,
where $\mathcal{C}(z)$ is a central operator in the algebra defined by the relations \eqref{RTT}.
Let $\pi(z)$ be a~formal series $\pi(z)=1+\sum_{m\geq0}\pi_m (c/z)^{-m-1}$ which
solves the equation
$
\mathcal{C}(z)=\pi(z) \pi(z+c\kappa_n)$.
This equation can be solved inductively expressing the
coefficient of the formal series $\pi(z)$ through the coefficients of the
central element $\mathcal{C}(z)$. Then one can rescale the monodromy matrix~${
T(z)\to T'(z) = \pi(z)^{-1} T(z)}
$
in such a way that the rescaled monodromy matrix satisfies the equation
\begin{equation}\label{cen3}
 T'(z)^{\rm t}\cdot T'(z+c\kappa_n)= T'(z+c\kappa_n)\cdot T'(z)^{\rm t} = \mathbf{I} .
\end{equation}

{\bf Zero modes.}
The zero modes $\sT_{i,j} := T_{i,j}[0]$ (see \eqref{genser}) will play an important role in
our approach.
Considering expansion of $T_{i,j}(u)$ and the rational functions in \r{rtt} as series with respect to $1/u$, the coefficient of $u^{-1}$ in \r{rtt} yields
\begin{equation}
\label{Azm}
[\sT_{i,j},T_{k,l}(v)]=\vk_i \delta_{i,l} \sT_{k,j}(v)- \vk_j \delta_{k,j} T_{i,l}(v)
\end{equation}
for $\mathfrak{g}=\mathfrak{gl}_n$ and
\begin{equation}
\label{zm1}
[\sT_{i,j},T_{k,l}(v)]=\vk_i (\delta_{i,l} T_{k,j}(v)-\delta_{l,-j} T_{k,-i}(v))
-\vk_j (\delta_{k,j} T_{i,l}(v)-\delta_{k,-i} T_{-j,l}(v))
\end{equation}
for $\mathfrak{g}=\mathfrak{o}_{2n+1}$.

More generally,
in each integrable models,
 the monodromy matrix zero mode operators may be always defined as the
 operators built from the monodromy matrix entries
 which
satisfy the commutation relations of the finite-dimensional algebra $\mathfrak{g}$.

{\bf Embeddings.}
We describe different embeddings in the Yangians $Y(\mathfrak{gl}_{n})$ and $Y(\mathfrak{o}_{2n+1})$ that will be reflected in a factorisation (also called splitting) property of the Bethe vectors, see Sections~\ref{sect:red-gl.x.gl} and~\ref{sect:red-o.x.gl} below.

{\bf Embedding $\boldsymbol{Y(\mathfrak{gl}_a)\otimes Y(\mathfrak{gl}_{n-a})
\hookrightarrow Y(\mathfrak{gl}_{n})}$.}
From the commutation relations \eqref{rtt}, it is clear that in $Y(\mathfrak{gl}_{n})$ the elements
$T_{i,j}(z)$, $1\leq i,j\leq a$ generate a Yangian subalgebra~$Y(\mathfrak{gl}_{a})$ in $Y(\mathfrak{gl}_{n})$, while the elements $T_{a+i,a+j}(z)$, $1\leq i,j\leq n-a$ generate the Yangian subalgebra~$Y(\mathfrak{gl}_{n-a})$. However, these two subalgebras do not commute. To get an embedding of~${Y(\mathfrak{gl}_a)\otimes Y(\mathfrak{gl}_{n-a})}$ in $Y(\mathfrak{gl}_{n})$, one needs to consider quantum minors,
see relation (1.84), in \cite[Corollaries 1.7.2 and 1.11.4]{Molev}.

{\bf Embeddings of $\boldsymbol{Y\!(\mathfrak{gl}_{n})\!\hookrightarrow \!Y(\mathfrak{o}_{2n+1})}$.}
There are several ways to embed $Y(\mathfrak{gl}_{n})$ in $Y(\mathfrak{o}_{2n+1})$. We describe here two of them that have some relevance for the study of Bethe vectors. Let us denote the $Y(\mathfrak{gl}_n)$
monodromy matrix embedded in $Y(\mathfrak{o}_{2n+1})$ as $\TT(z)$.
\begin{itemize}\itemsep=0pt
\item From the commutation relations \eqref{rtt}, it is clear that in $Y(\mathfrak{o}_{2n+1})$ the elements
\begin{equation}\label{Tgl}
\TT_{i,j}(z) := T_{i,j}(z) ,\qquad 1\leq i,j\leq n
\end{equation}
 generate the Yangian $Y(\mathfrak{gl}_{n})$, since they obey the
commutation relations \eqref{rtt} without the second line (all the terms
 which include the Kronecker symbol $\delta$ vanish).

\item
Another embedding can be done, considering the elements $T_{i,j}(z)$, $-n\leq i,j\leq -1$ and defining
\begin{equation}\label{em-sh}
 \hat{\TT}_{i,j}(z) := T_{i - n - 1,j - n -1}(z), \qquad 1\leq i,j\leq n.
\end{equation}
It is easy to see that $\hat{\TT}_{i,j}(z)$ also obey the commutation relations \eqref{rtt} without the second line.
The twisting parameters for the monodromy $\hat{\TT}(z)$ in this case are $(\chi_{n+1-i})^{-1}$.
\end{itemize}

{\bf Embedding $\boldsymbol{Y(\mathfrak{o}_{2a+1})\otimes Y(\mathfrak{gl}_{n-a})
\hookrightarrow Y(\mathfrak{o}_{2n+1})}$.}
This type of embedding is more intricate. It uses the concept of
quasi-determinants \cite{GelfandRetakh} \smash{$\widehat T_{i,j}(z)$} of the monodromy matrix, and we refer to, e.g., \cite{Molev} for more details about quasi-determinants in the Yangian case.
Indeed, one can show that $\widehat T_{i,j}(z)$, $1\leq i,j\leq a$ generate a Yangian subalgebra $Y(\mathfrak{o}_{2a+1})$,
while~${T_{k+a,l+a}(z)}$, $1\leq k,l\leq n-a$ generate a Yangian subalgebra $Y(\mathfrak{gl}_{n-a})$. Moreover, we have
\smash{$\big[\widehat T_{i,j}(z) , T_{k+a,l+a}(z)\big]=0$} for $1\leq i,j\leq a$ and $1\leq k,l\leq n-a$.
A detailed presentation of this approach is beyond the scope of the present article, we
refer to \cite[Theorem 3.7 and Corollary 3.10]{JLM18} for a detailed presentation of the construction.
It is remarkable that this type of embedding
is reflected in a simple way
on the structure of Bethe vectors, see Section \ref{sect:red-o.x.gl} below.

\subsection{Bethe vectors}

In the framework of algebraic Bethe ansatz, the states in the
Hilbert space of the physical model are defined by the vectors $\rvec$ and $\lvec$
such that
\begin{equation}\label{rvec}
T_{i,j}(z)\rvec =0 ,\quad i>j ,\qquad T_{i,i}(z)\rvec=\lambda_i(z)\rvec ,\quad i\in\Ig
\end{equation}
and
\begin{equation}\label{lvec}
\lvec T_{i,j}(z) =0 ,\quad i<j ,\qquad \lvec T_{i,i}(z)=\lambda_i(z)\lvec ,\quad i\in\Ig .
\end{equation}
In \r{rvec}, the monodromy matrix elements are acting to the right, while in \r{lvec}
they are acting to the left. If they exist, such vectors are called {\it vacuum vectors}.

The functions $\lambda_i(z)$ are characterizing the physical model under
consideration. Since we are
considering generic model, we will consider these functions as free functional
parameters. For $\mathfrak{gl}_n$-invariant
integrable models, the functions $\lambda_i(z)$ are all independent, while for
$\mathfrak{o}_{2n+1}$-invariant models and due to \r{cen3}
they satisfy the relations \cite{LP1}
\begin{equation}\label{lam}
\lambda_{-j}(z)=\frac{1}{\lambda_{j}(z_j)}\prod_{s=j+1}^n\frac{\lambda_s(z_{s-1})}{\lambda_s(z_s)},\qquad j=0,1,\dots,n ,
\end{equation}
where we introduced the \textit{shifted spectral parameter}
\begin{equation}\label{zs}
z_s=z-c\left(s-\frac12\right),\qquad s=0,1,\dots,n .
\end{equation}

Due to our choice \r{genser} of dependence of monodromy matrix entries
on the formal spectral parameter, the free functional parameters $\lambda_i(z)$
are formal series
\[
\lambda_i(z)=\vk_i+\sum_{\ell\geq 0}\lambda_i[\ell] (z/c)^{-\ell-1}
\]
with respect to the formal parameter $z$.

The Bethe vectors
 in the integrable model
depends on a collection of sets
\[
\bt=\begin{cases}
\bigl\{\bt^1,\dots,\bt^{n-1}\bigr\}&\text{for}\  \mathfrak{g}=\mathfrak{gl}_n,\\
\bigl\{\bt^0,\bt^1,\dots,\bt^{n-1}\bigr\}&\text{for}\  \mathfrak{g}=\mathfrak{o}_{2n+1}.
\end{cases}
\]
Here, the set $\bt^s=\{t^s_1,\dots,t^s_{r_s}\}$ denotes a collection of $r_s$
Bethe parameters. The non-negative integer~$r_s$ is the cardinality $|\bt^s|$ of the set
$\bt^s$.
The superscripts on the (sets of) Bethe parameters denotes their color.
 The colors are in correspondence with the simple roots of the algebra
 $\mathfrak{g}$.
This can be formalized through the operators $\mathsf{h}_i$ defined by
\begin{equation}\label{Cartan}
\vk_i \hc_i=\sT_{i,i}-\lambda_i[0]
\end{equation}
such that the vacuum vectors have zero eigenvalue
$
\hc_i \rvec =0$, $ \lvec \hc_i=0 $.
According to \r{Azm} and \r{zm1}, the monodromy matrix entries
$T_{k,l}(z)$ are eigenvector for the adjoint action of the operators
$\hc_i$
$
[\hc_i,T_{k,l}(z)]= (\delta_{i,l}-\delta_{i,k}) T_{k,l}(z)
$
for $\mathfrak{gl}_N$-invariant monodromies
and
\[
[\hc_i,T_{k,l}(z)]=(\delta_{i,l}-\delta_{i,k}+\delta_{i,-k}-\delta_{i,-l}) T_{k,l}(z)
\]
for $\mathfrak{o}_{2n+1}$-invariant monodromies. Using the operators $\hc_i$ \r{Cartan}, one also defines the operators
\begin{equation}\label{col-op}
\mathsf{t}_s=\sum_{i=s+1}^n \hc_i ,
\end{equation}
where $s=1,\dots,n-1$ for $\mathfrak{gl}_N$-invariant monodromies
and $s=0,1,\dots,n-1$ for $\mathfrak{o}_{2n+1}$-invariant monodromies.

 The Bethe vectors themselves are certain
 polynomials of the non-commutative monodromy entries $T_{i,j}(u)$ for $i <j$ depending on various Bethe parameters acting on the right vacuum vector $\rvec$
\begin{equation}\label{bvgr}
\BB\bigl(\bar t\bigr)=\mathcal{P}\bigl(T_{i< j}
\bigl(\bar t\bigr)\bigr)
 \rvec=\mathcal{B}\bigl(\bar t\bigr) \rvec,
\end{equation}
where the polynomial $\mathcal{B}\bigl(\bar t\bigr)=\mathcal{P}\bigl(T_{i< j}\bigl(\bar t\bigr)\bigr)$ is called a \textit{pre-Bethe vector}.

Analogously, left or dual Bethe vectors
are polynomials of monodromy entries $T_{i,j}(u)$ for~${i > j}$ acting to the left vacuum vector $\rvec$
\begin{equation}\label{bvgl}
\CC\bigl(\bar t\bigr)=\lvec \mathcal{P}'\bigl(T_{i > j}\bigl(\bar t\bigr)\bigr)=\lvec \mathcal{C}\bigl(\bar t\bigr).
\end{equation}
The polynomials $\mathcal{P}$ and $\mathcal{P}'$ are related by the transposition antihomomorphism (see Remark~\ref{rem21}).
The ordering of the non-commutative entries $T_{i,j}(t^s_a)$ in the polynomials
$\mathcal{P}$ and $\mathcal{P}'$ and the structure
 of these polynomials can be fixed in the framework of the
nested Bethe ansatz \cite{KR1, KR2} or by the method of projections
\cite{HLPRS17,KhP-Kyoto}. When the Bethe parameters
are generic, we call such Bethe vectors \r{bvgr} and \r{bvgl} {\it off-shell Bethe vectors}.

 The off-shell Bethe vectors $\BB\bigl(\bt\bigr)$ and
$\CC\bigl(\bt\bigr)$ are eigenvectors of the operators
$\mathsf{t}_s$ \r{col-op}
\[
\mathsf{t}_s\cdot\BB\bigl(\bt\bigr)=r_s \BB\bigl(\bt\bigr),\qquad \CC\bigl(\bt\bigr)\cdot\mathsf{t}_s=
r_s \CC\bigl(\bt\bigr),\qquad i=1,\dots,n,
\]
with $s=1,\dots,n-1$ for $\mathfrak{gl}_n$
and $s=0,1,\dots,n-1$ for $\mathfrak{o}_{2n+1}$. This property can be proved
using the recurrence relations for the Bethe vectors and the
action of the monodromy matrix entries $T_{i,i}(z)$ on Bethe vectors
(see proof of Proposition~3.1 in the case of $\mathfrak{o}_{2n+1}$ in \cite{LPR25}).

\begin{rem}\label{rem21}
In what follows, we will consider only the Bethe vectors
$\BB\bigl(\bt\bigr)$. All the relations for the dual Bethe vectors $\CC\bigl(\bt\bigr)$
can be obtained from the corresponding relations for $\BB\bigl(\bt\bigr)$
using the transposition antihomomorphism.
When we consider different embeddings, we will
use the notation $\BBg\bigl(\bt\bigr)$ and $\BBo\bigl(\bt\bigr)$ to distinguish the
off-shell Bethe vectors in the models with different symmetries.
But most often we will the use notation $\BB\bigl(\bt\bigr)$
since it will be clear from the context what type of Bethe
vector we are exploring.
\end{rem}

\begin{rem}\label{rem: chi homogeneity}
The commutation relations \eqref{rtt} between the monodromy matrix entries do not depend on the parameters $\chi_{i}$ explicitly, nor does the definition of the vacuum state \eqref{bvgr}. Since the Bethe vectors are polynomials in the entries of the monodromy matrix, they do not depend explicitly on the parameters $\chi_{i}$ either.
It is only when using the expansion \eqref{genser} that the dependence in the $\chi_i$ parameters becomes explicit, as for instance in the relations \eqref{Azm} or \eqref{zm1} which involve the zero mode action. Therefore, any relation involving only the Bethe vectors, the entries $T_{ij}(z)$ and/or the eigenvalues $\lambda_i(z)$ should not depend on the $\chi$'s. In other words, if the $\chi_i$ parameters appear explicitly in such relation, each coefficient of the parameters~$\chi_i$ should be set to zero independently. We will use this property in Appendix~\ref{ApB}.
\end{rem}

{\bf On-shell Bethe vectors.}
Due to the $RTT$ commutation relations \r{RTT}, the trace
of the monodromy matrix (the transfer matrix)
\begin{equation}\label{trace}
\mathcal{T}(z)=\sum_{i\in\Ig}T_{i,i}(z)
\end{equation}
commutes
$
\mathcal{T}(z) \mathcal{T}(z')=\mathcal{T}(z') \mathcal{T}(z)
$
for two different formal spectral parameters $z$ and $z'$. Upon expansion in $z$, the transfer matrix~\eqref{trace} generates a family of commuting operators.
The Bethe vectors become eigenvectors of the transfer matrix (also known as {\it on-shell Bethe vectors}) if the Bethe
parameters satisfy so called Bethe equations (see below \r{BE}).
To describe the Bethe equations in the $\mathfrak{g}$-invariant integrable models,
we introduce the rational functions
\[
f(u,v)=1+g(u,v)=h(u,v)g(u,v)=\frac{u-v+c}{u-v},\qquad \fgo(u,v)=\frac{u-v+c/2}{u-v} .
\]
 We also define the functions
\[
 \alpha_s(z) = \frac{\lambda_s(z)}{\lambda_{s+1}(z)}, \qquad s = 0, \dots, n-1.
\]

We will use the following convention for the products of scalar
functions depending on sets of parameters, for example,
\begin{equation}\label{eq:convention}
\lambda_s(\bt^s)=\prod_{t^s_a \in \bt^s}\lambda_s(t^s_a),\qquad
f\bigl(\bt^s,\bt^{s'}\bigr)=\prod_{t^s_a \in \bt^s}
\prod_{t^{s'}_b \in \bt^{s'}}f\bigl(t^s_a, t^{s'}_b\bigr),\qquad \mbox{etc.}
\end{equation}
with $f\bigl(\varnothing,\bt\bigr)=f\bigl(\bt,\varnothing\bigr)=1$.

The recurrence relations for the off-shell Bethe vectors will
be written as sums over partitions of the sets of Bethe parameters.
A {\it partition} $ \bigl\{\bt^s_{\so},\bt^s_{\st},\bt^s_{\sth}\bigr\}\vdash \bt^s$
corresponds to a decomposition into (possibly empty) disjoint subsets
$\bt^s_{\so}$, $\bt^s_{\st}$, $\bt^s_{\sth}$ such that
$\bt^s=\bt^s_{\so}\cup\bt^s_{\st}\cup\bt^s_{\sth}$
and \smash{$\bt^s_{\so}\cap\bt^s_{\st}=\bt^s_{\so}\cap\bt^s_{\sth}=\bt^s_{\st}\cap\bt^s_{\sth}=\varnothing$}.
The cardinalities of the subsets satisfy the equality
$
\big|\bt^s\big|=\big|\bt^s_{\so}\big|+\big|\bt^s_{\st}\big|+\big|\bt^s_{\sth}\big|$,
where some of the cardinalities $\big|\bt^s_{\so}\big|$, $\big|\bt^s_{\st}\big|$,
$\big|\bt^s_{\sth}\big|$ can be zero.
The partition $\bigl\{\bt^s_{\so},\bt^s_{\st}\bigr\}\vdash\bt^s$ is
defined analogously.

The on-shell Bethe vectors are eigenvectors of the transfer matrix
\begin{equation}\label{eq:transf}
\mathcal{T}(z)\cdot\BB\bigl(\bt\bigr)=\eigen\bigl(z;\bt\bigr) \BB\bigl(\bt\bigr)
\end{equation}
if each set $\bt^s$ of Bethe parameters satisfy {\it the Bethe equations}
\cite{KR2,Res91}
\begin{gather}
\alpha_s\bigl(\bt^s_{\so}\bigr)=\frac{\lambda_s\bigl(\bt^s_{\so}\bigr)}{\lambda_{s+1}\bigl(\bt^s_{\so}\bigr)}=
\frac{f_s\bigl(\bt^s_{\so},\bt^s_{\st}\bigr)}{f_s\bigl(\bt^s_{\st},\bt^s_{\so}\bigr)}
\frac{f\bigl(\bt^{s+1},\bt^s_{\so}\bigr)}{f\bigl(\bt^s_{\so},\bt^{s-1}\bigr)}
\qquad\!
\text{with } \begin{cases}s=1,\dots,n-1 &\text{for } \mathfrak{gl}_{n},\\
s=0,1,\dots,n-1 &\text{for } \mathfrak{o}_{2n+1},\end{cases}\!\!\!\label{BE}
\end{gather}
for any disjoint partition $\bigl\{\bt^s_{\so},\bt^s_{\st}\bigr\}\vdash\bt^s$ and with the conditions $\bt^{0} = \bt^n = \varnothing$ for $\mathfrak{gl}_{n}$ and $\bt^{-1} = \bt^n = \varnothing$ for $\mathfrak{o}_{2n+1}$.
In \r{BE}, we have used the convention \eqref{eq:convention}
for products of functions
and
the functions $f_s(u,v)$ are defined as
\[
f_s(u,v)=\begin{cases}\fgo(u,v),  &s=0,\\f(u,v),  &s=1,\dots,n-1 .
\end{cases}
\]
For the
$\mathfrak{gl}_{n}$-invariant models, the eigenvalue $\eigen\bigl(z;\bt\bigr)$ in \eqref{eq:transf}
is equal to \cite{KR2}
\[
\eigen\bigl(z;\bt\bigr)=\sum_{s=1}^n \lambda_s(z) f\bigl(\bt^s,z\bigr) f\bigl(z,\bt^{s-1}\bigr) .
\]
In the case of $\mathfrak{o}_{2n+1}$-invariant models, this eigenvalue is
\cite{LP2,Res91}
\begin{align*}
\eigen\bigl(z;\bt\bigr)={}&\lambda_0(z) f\bigl(\bt^0,z_0\bigr) f\bigl(z,\bt^0\bigr)+
\sum_{s=1}^n \bigl(\lambda_s(z) f\bigl(\bt^s,z\bigr) f\bigl(z,\bt^{s-1}\bigr)\\
&
+\lambda_{-s}(z) f\bigl(\bt^{s-1},z_{s-1}\bigr) f\bigl(z_s,\bt^{s}\bigr)\bigr) ,
\end{align*}
where we have used the notations \r{zs} and
 $\lambda_{-s}(z)$ satisfy the relations \r{lam}.

\section[Bethe vectors in gl\_n-invariant models]{Bethe vectors in $\boldsymbol{\mathfrak{gl}_n}$-invariant models}\label{sect:gln}

Denote by $\bigl\{\bt^s\bigr\}_i^{j}:=\bigl\{\bt^i,\bt^{i+1},\dots,\bt^{j}\bigr\}$
 a partial collection of sets $\bt^s$ of Bethe parameters
 for~$i\leq s\leq j$.
 For example, \smash{$\bt=\bigl\{\bt^s\bigr\}_1^{n-1}$} for $\mathfrak{gl}_n$ Bethe vectors.
We will always assume that the partial collection of sets
\smash{$\bigl\{\bt^s\bigr\}_i^{j}$} is empty if $j<i$.

For $1\leq \ell<n-1$ and $1<k\leq n$, we introduce the functions
\[
\psi_\ell\bigl(z;\bt\bigr)=g\bigl(z,\bt^{\ell-1}\bigr) h\bigl(\bt^\ell,z\bigr),\qquad \phi_k\bigl(z;\bt\bigr)=
h\bigl(z,\bt^{k-1}\bigr) g\bigl(\bt^k,z\bigr) .
\]
Remark that $\psi_1\bigl(z;\bt\bigr)=h\bigl(\bt^1,z\bigr)$ and $\phi_n\bigl(z;\bt\bigr)=h\bigl(z,\bt^{n-1}\bigr)$
because $g\bigl(z,\bt^0\bigr)=g\bigl(\bt^n,z\bigr)= 1$ since $\bt^0=\bt^n = \varnothing$.

\subsection[Action formula for T\_1,n(z) and T\_l+1,l]{Action formula for $\boldsymbol{T_{1,n}(z)}$ and $\boldsymbol{\sT_{\ell+1,\ell}}$}

Denote by $\bu^s=\bigl\{t^s_1,\dots,t^s_{r_s},z\bigr\}$ an extended set of the Bethe parameters of the color $s$. For shortness, we will write $\bu^s=\bigl\{\bt^s,z\bigr\}$.
The off-shell Bethe vectors
$\BB\bigl(\bt\bigr)$ in $\mathfrak{gl}_n$-invariant models are normalized in such a way that
the action of the monodromy entry $T_{1,n}(z)$ onto off-shell Bethe vector
$\BB\bigl(\bt\bigr)$ has the simple form\footnote{Note that normalization of the
off-shell Bethe vectors used in \r{Az1} differs from the normalization used
in the paper \cite{HLPRS17b}, but it is more suitable for the propose of this paper.}
\begin{equation}\label{Az1}
T_{1,n}(z)\cdot \BB\bigl(\bt\bigr)=\mu_1^n\bigl(z;\bt\bigr) \BB(\bu) ,
\end{equation}
where $\bu=\bigl\{\bu^1,\bu^2,\dots,\bu^{n-1}\bigr\}$.
The normalization factor $\mu_\ell^k\bigl(z;\bt\bigr)$ is defined as follows
for any $1\leq \ell<k\leq n$
\begin{equation}\label{Az2}
\mu_\ell^k\bigl(z;\bt\bigr)=\lambda_k(z) \psi_\ell\bigl(z;\bt\bigr) \phi_k\bigl(z;\bt\bigr)=\lambda_k(z)
g\bigl(z,\bt^{\ell-1}\bigr) h\bigl(\bt^\ell,z\bigr) h\bigl(z,\bt^{k-1}\bigr) g\bigl(\bt^k,z\bigr) .
\end{equation}

We denote by the symbol $\Z_{\ell}^k$ the operation of adding a parameter $z$ to the sets $\bt^\ell,\dots,\bt^{k-1}$
of Bethe parameters
in the off-shell Bethe vectors $\BB\bigl(\bt\bigr)$
\begin{equation}\label{Az3}
\Z_{\ell}^k\cdot \BB\bigl(\bt\bigr)=\BB\bigl(\bigl\{\bt^s\bigr\}_1^{\ell-1},\bigl\{\bt^s,z\bigr\}_\ell^{k-1},\bigl\{\bt^s\bigr\}_k^{n-1}\bigr) .
\end{equation}
For example, using this notation the action \r{Az1} can be written as follows:
\begin{equation}\label{Az4}
\Z_1^n\cdot\BB\bigl(\bt\bigr)=\frac{1}{\mu_1^n\bigl(z;\bt\bigr)} T_{1,n}(z)\cdot \BB\bigl(\bt\bigr) .
\end{equation}
We will use relation \r{Az1} in the form \r{Az4} as a base relation for
the inductive proof of Theorem~\ref{rec-pr}.

To find the rectangular recurrence relations for the off-shell Bethe vectors
$\BB\bigl(\bt\bigr)$, we will use the commutation relations \r{Azm} in the particular case
\begin{equation}\label{Az5}
[\sT_{\ell+1,\ell},T_{i,j}(z)]=\vk_{\ell+1} \delta_{\ell+1,j} T_{i,\ell}(z)-
\vk_{\ell} \delta_{\ell,i} T_{\ell+1,j}(z)
\end{equation}
and the action of the zero mode operators $\sT_{\ell+1,\ell}$ onto off-shell Bethe vectors.
These actions as well as the action \r{Az1} can be calculated in the framework of the
projection method. For more details, one can look at the \cite[Section~4.1]{LP2},
where the action of the zero modes on the off-shell Bethe vectors
in $\mathfrak{o}_{2n+1}$-invariant integrable model was calculated.
The calculation of the zero modes action for
the $\mathfrak{gl}_{n}$-invariant integrable model is similar, starting from the results stated in \cite{HLPRS17}.
We will write the action of the zero modes in the form
\begin{align}
\sT_{\ell+1,\ell}\cdot\BB\bigl(\bt\bigr) ={}&\sum_{\rm part}\bigl(\vk_{\ell+1}
\alpha_\ell\bigl(\bt^\ell_{\so}\bigr) \Oml\bigl(\bt^\ell_{\st},\bt^\ell_{\so}|\bt^{\ell-1},\bt^{\ell+1}\bigr)\nonumber\\
&- \vk_\ell \Omr\bigl(\bt^\ell_{\so},\bt^\ell_{\st}|\bt^{\ell-1},\bt^{\ell+1}\bigr)\bigr)
\BB\bigl(\bigl\{\bt^s\bigr\}_{1}^{\ell-1},\bt^\ell_{\st},\bigl\{\bt^s\bigr\}_{\ell+1}^{n-1}\bigr) ,\label{Az6}
\end{align}
where the sum in \r{Az6} goes over
partitions $ \bigl\{\bt^\ell_{\so},\bt^\ell_{\st}\bigr\}\vdash \bt^\ell$ such that
$\big|\bt^\ell_{\so}\big|=1$.
The functions $\Oml$ and $\Omr$ in \r{Az6}
are defined as follows:
\begin{gather}
\Oml\bigl(\bt^\ell_{\st},\bt^\ell_{\so}|\bt^{\ell-1},\bt^{\ell+1}\bigr)=\gamma\bigl(\bt^\ell_{\st},\bt^\ell_{\so}\bigr)
 \frac{h\bigl(\bt^\ell_{\so},\bt^{\ell-1}\bigr)}{g\bigl(\bt^{\ell+1},\bt^\ell_{\so}\bigr)},\nonumber\\
\Omr\bigl(\bt^\ell_{\so},\bt^\ell_{\st}|\bt^{\ell-1},\bt^{\ell+1}\bigr)=
\gamma\bigl(\bt^\ell_{\so},\bt^\ell_{\st}\bigr)
 \frac{h\bigl(\bt^{\ell+1},\bt^\ell_{\so}\bigr)}{g\bigl(\bt^\ell_{\so},\bt^{\ell-1}\bigr)} ,\label{Az7}
\end{gather}
where the function $\gamma(u,v)$ is
\begin{equation*}
\gamma(u,v)=\frac{f(u,v)}{h(u,v)h(v,u)}=\frac{g(u,v)}{h(v,u)} .
\end{equation*}

\subsection[Rectangular recurrence relations for gl\_n]{Rectangular recurrence relations for $\boldsymbol{\mathfrak{gl}_n}$}

For $m\in\ZZ$, we define the step function $\Theta(m)$
\[
\Theta(m)=\begin{cases}1,& m\geq 0,\\ 0,&  m<0.
\end{cases}
\]

Theorem~\ref{rec-pr} below will be proved in Appendix~\ref{ApA} by induction starting from \eqref{Az4}. Theorem~\ref{rec-pr} allows to express
off-shell Bethe vectors through the action of the monodromy entries on Bethe
vectors with a smaller number of Bethe parameters. This result is new
although some examples of this type of recurrence relations were know earlier
(see Corollary~\ref{cor1}).

\begin{thm}\label{rec-pr}
For any pair of positive integers $1\leq \ell< k\leq n$, the off-shell
Bethe vector
$\Z_\ell^{k}\cdot \BB\bigl(\bt\bigr)$ satisfies the
{\it rectangular} recurrence relation
\begin{align}
\Z_\ell^{k}\cdot \BB\bigl(\bt\bigr)&=
\BB\bigl(\bigl\{\bt^s\bigr\}_1^{\ell-1},\bigl\{\bu^{s}\bigr\}_\ell^{k-1},\bigl\{\bt^s\bigr\}_k^{n-1}\bigr)\nonumber\\
&=\frac{1}{\mu^k_\ell\bigl(z;\bt\bigr)}
\sum_{i=1}^\ell\sum_{j=k}^n \sum_{\rm part}
 \Xi^{\ell,k}_{i,j}\bigl(z;\bt_{\so},\bt_{\st},\bt_{\sth}\bigr)
T_{i,j}(z)\cdot\BB\bigl(\bt_{\st}\bigr) ,\label{Az19}
\end{align}
where the functions $\Xi^{\ell,k}_{i,j}\bigl(z;\bt_{\so},\bt_{\st},\bt_{\sth}\bigr)$
\begin{align}
\Xi^{\ell,k}_{i,j}\bigl(z;\bt_{\so},\bt_{\st},\bt_{\sth}\bigr)={}&
g\bigl(z, \bt^{\ell-1}_{\so}\bigr) g\bigl(\bt^k_{\sth}, z\bigr)\nonumber\\
&\times
\prod_{s=i}^{\ell-1}\Omr\bigl(\bt^s_{\so},\bt^s_{\st}|\bt^{s-1}_{\st},\bt^{s+1}_{\st}\bigr)
\prod_{s=k}^{j-1}\alpha_s\bigl(\bt^s_{\sth}\bigr)
\Oml\bigl(\bt^s_{\st},\bt^s_{\sth}|\bt^{s-1}_{\st},\bt^{s+1}_{\st}\bigr)\label{A-X-f}
\end{align}
depend on the partitions and
the sum in \r{Az19} goes over partitions
$ \bigl\{\bt^s_{\so},\bt^s_{\st},
\bt^s_{\sth}\bigr\}\vdash \bt^s$
with cardinalities
\begin{equation}\label{A-car}
\big|\bt^s_{\so}\big|=\begin{cases}\Theta(s-i),&s<\ell,\\0,&s\geq \ell,\end{cases}
\qquad
\big|\bt^s_{\sth}\big|=\begin{cases}0,&s\leq k-1,\\
\Theta(j-s-1), &s>k-1.\end{cases}
\end{equation}
 The sets $\bt^\ell,\dots,\bt^{k-1}$ are not partitioned
and in \r{Az19} $\bt^\ell_{\st}=\bt^\ell$ and $\bt^{k-1}_{\st}=\bt^{k-1}$.

If the set $\bt^s = \varnothing$ and according to
\r{A-car} one may have $\big|\bt^s_{\so}\big| = 1$ for some $i$ or
$\big|\bt^s_{\sth}\big| = 1$ for some $j$,
then the terms in the right-hand side
of \r{Az19} for such values of the indices $i$ and $j$ have to be discarded.
\end{thm}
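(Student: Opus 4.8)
The plan is to prove \r{Az19} by induction on $k-\ell$, the number of colors to which we must add the parameter $z$. The base case is $k-\ell=n-1$ (so $\ell=1$, $k=n$), which is exactly the normalization \r{Az4}; here no partitions occur, $\Xi^{1,n}_{1,n}=1$, and \r{Az19} reduces to $\Z_1^n\cdot\BB(\bt)=\tfrac{1}{\mu_1^n(z;\bt)}T_{1,n}(z)\cdot\BB(\bt)$. For the inductive step, I would use the zero-mode commutation relation \r{Az5} to push the range of colors affected by $z$ inward from one side at a time. Concretely, starting from $\Z_\ell^k$ and acting with $\sT_{\ell,\ell-1}$ (resp. $\sT_{k,k-1}$) on a Bethe vector where $z$ has been added to colors $\ell,\dots,k-1$, one either lengthens the block to include color $\ell-1$ (resp. $k-1$) or produces terms where $z$ survives only on the shorter block, with coefficients governed by $\Omr$ (resp. $\alpha_s\Oml$) as in the zero-mode action \r{Az6}.

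The key computational step is to compare, on the one hand, the zero-mode acting through the monodromy entry $T_{i,j}(z)$ via \r{Az5} — which turns $T_{i,j}(z)$ into a combination of $T_{i,\ell-1}(z)$ and $T_{\ell,j}(z)$ (or $T_{i,k-1}(z)$ and $T_{i,k}(z)$ on the other side) — against, on the other hand, the zero-mode acting through the Bethe vector $\BB(\bt_{\st})$ via \r{Az6}. Rearranging $\sT_{\ell,\ell-1}\bigl(T_{i,j}(z)\cdot\BB(\bt_{\st})\bigr)$ using the Leibniz rule and the two actions, and demanding that the result match the $\Z_{\ell-1}^k$ expansion predicted by \r{Az19} (now with the set $\bt^{\ell-1}$ partitioned into three parts according to \r{A-car}), forces the recursion $\Xi^{\ell-1,k}_{i,j}=\Omr\bigl(\bt^{\ell-1}_{\so},\bt^{\ell-1}_{\st}\,|\,\bt^{\ell-2}_{\st},\bt^{\ell}_{\st}\bigr)\,\Xi^{\ell,k}_{i,j}$ together with the shift of the prefactor $g(z,\bt^{\ell-1}_{\so})$ and the rescaling of the normalization $\mu_{\ell-1}^k/\mu_\ell^k$; the latter ratio is an elementary $f$- and $h$-function identity that one checks directly from \r{Az2}. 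The symmetric step on the upper index $k$ produces the factor $\alpha_s\bigl(\bt^s_{\sth}\bigr)\Oml$ and the prefactor $g(\bt^k_{\sth},z)$, where the $\alpha_s$ comes from the $\vk_{\ell+1}\alpha_\ell$ term in \r{Az6}. One must also verify that the two directions of induction commute — i.e., that lengthening on the left and on the right can be performed in either order — which follows because the two families of zero modes $\sT_{s+1,s}$ for $s<\ell$ and $s\geq k$ act on disjoint blocks of colors and the corresponding $\Omr$ and $\Oml$ factors in \r{A-X-f} are indexed over disjoint ranges $i\le s\le \ell-1$ and $k\le s\le j-1$.

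The main obstacle I anticipate is bookkeeping the partitions correctly. When $\sT_{\ell,\ell-1}$ acts on $\BB(\bt_{\st})$ it extracts a single parameter from $\bt^{\ell-1}$ (the $\bt^{\ell-1}_{\so}$ with $|\bt^{\ell-1}_{\so}|=1$), but at the same time the same zero mode acting on $T_{i,j}(z)$ via \r{Az5} produces the \emph{new} monodromy entries whose lower index is $\ell-1$; these two mechanisms must be disentangled so that the iterated partition cardinalities \r{A-car} — with the telescoping $\Theta(s-i)$ and $\Theta(j-s-1)$ counting how many colors lie between the summation index and the block boundary — emerge with the right multiplicities. The delicate point is that the same partition subset $\bt^s_{\so}$ appears simultaneously in the argument of $\Omr\bigl(\bt^s_{\so},\bt^s_{\st}|\bt^{s-1}_{\st},\bt^{s+1}_{\st}\bigr)$ and, through $\bt^{s+1}_{\st}$, inside the adjacent factor; checking that these cross-references close up consistently under the induction, and that the degenerate cases flagged in the last paragraph of the statement (an empty $\bt^s$ forced to have $|\bt^s_{\so}|=1$ or $|\bt^s_{\sth}|=1$, whose terms must be dropped) are exactly the ones where the zero-mode action \r{Az6} contributes nothing, is where the argument requires the most care. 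The details are deferred to Appendix~\ref{ApA}.
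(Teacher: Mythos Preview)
Your overall strategy --- induction from the base case $(\ell,k)=(1,n)$ given by \r{Az4}, using the zero modes $\sT_{s+1,s}$ together with \r{Az5}--\r{Az6} and the Leibniz rule --- is exactly the paper's method. But the direction of your inductive step is inverted, and as written the mechanism does not produce the new vector you need.

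Concretely: you propose to act with $\sT_{\ell,\ell-1}$ on $\Z_\ell^k\cdot\BB(\bt)$, where $z$ sits at colors $\ell,\dots,k-1$. By \r{Az6}, $\sT_{\ell,\ell-1}$ removes a single parameter of color~$\ell-1$; but $z$ is \emph{not} present at color $\ell-1$ in this vector, so every term you obtain is again of the form $\Z_\ell^k\cdot\BB(\ldots,\bt^{\ell-1}_{\st},\ldots)$ with the same $z$-block. You never produce $\Z_{\ell-1}^k\cdot\BB(\bt)$ (nor $\Z_{\ell+1}^k$), so there is nothing to match against. Relatedly, your claimed recursion $\Xi^{\ell-1,k}_{i,j}=\Omr(\bt^{\ell-1}_{\so},\bt^{\ell-1}_{\st}|\cdots)\,\Xi^{\ell,k}_{i,j}$ is in the wrong direction: from \r{A-X-f} it is $\Xi^{\ell+1,k}_{i,j}$ that carries the extra $\Omr$ factor (at $s=\ell$) compared to $\Xi^{\ell,k}_{i,j}$, together with the shift $g(z,\bt^{\ell-1}_{\so})\to g(z,\bt^{\ell}_{\so})$. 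Since your base case is the \emph{maximal} block $(1,n)$, the induction must \emph{shrink} the block.

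The paper's fix is exactly this reversal. First (Lemma~\ref{l-pr}) one proves \r{Az19} for $k=n$ and all $\ell$ by applying $\sT_{\ell+1,\ell}$ to both sides of the assumed formula for $\Z_\ell^n\cdot\BB(\bt)$: on the Bethe-vector side, the zero mode can now extract $z$ itself from color $\ell$ (producing the sought $\Z_{\ell+1}^n\cdot\BB(\bt)$ with a computable coefficient), while the remaining terms, where a genuine $t^\ell_a$ is removed, are handled by the induction hypothesis for $\Z_\ell^n$ on the smaller set. On the $T_{i,n}(z)\cdot\BB$ side one uses $[\sT_{\ell+1,\ell},T_{i,n}(z)]=-\vk_\ell\,\delta_{\ell,i}\,T_{\ell+1,n}(z)$ plus \r{Az6}. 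The terms proportional to $\vk_{\ell+1}$ cancel between the two sides, and the surviving $\vk_\ell$-terms collapse to the formula for $\Z_{\ell+1}^n$ thanks to the elementary identity
\[
g(z,\bt^{\ell-1}_{\so})\Bigl(h(\bt^\ell_{\so},\bt^{\ell-1}_{\so})-\frac{f(\bt^\ell_{\so},z)}{g(\bt^\ell_{\so},\bt^{\ell-1}_{\so})}\Bigr)=g(z,\bt^{\ell}_{\so}),
\]
which is the piece of ``bookkeeping'' you flag in your last paragraph. The downward induction on $k$ (apply $\sT_{k+1,k}$ to the known formula for $\Z_\ell^{k+1}$ to obtain $\Z_\ell^{k}$) is entirely parallel, with $\alpha_s\Oml$ in place of $\Omr$ and $g(\bt^k_{\sth},z)$ in place of $g(z,\bt^{\ell-1}_{\so})$. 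Once the direction is corrected, the rest of your outline --- including the observation that the left and right steps commute because the $\Omr$- and $\Oml$-products in \r{A-X-f} sit over disjoint color ranges --- goes through as you describe.
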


The cardinalities of the subsets $\bt^s_{\so}$ and $\bt^s_{\sth}$ given by
\r{A-car} for $s=1,\dots,n-1$ can be visualized in the following table:
\begin{equation*}
\begin{array}{c|ccc|ccc|ccc|ccc|ccc}
s&1&\cdots&i-1&i&\cdots&\ell-1&\ell&\cdots&k-1&k&\cdots&j-1&j&\cdots&n-1\\[1mm]
\hline\\[-4.6mm]
\big|\bt^s_{\so}\big|&0&\cdots&0&1&\cdots&1&0&\cdots&0&0&\cdots&0&0&\cdots&0\\[1mm]
\big|\bt^s_{\sth}\big|&0&\cdots&0&0&\cdots&0&0&\cdots&0&1&\cdots&1&0&\cdots&0
\end{array}
\end{equation*}

Remark that since $\ell<k$, the cardinalities in \eqref{A-car} show that for any color $s$, either $\bt^s_{\so}$ or $\bt^s_{\sth}$ is empty.
In other words, we always have partitions into a maximum of two subsets,
$ \bigl\{\bt^s_{\so},\bt^s_{\st}\bigr\}\vdash \bt^s$ or
$\bigl\{\bt^s_{\st},\bt^s_{\sth}\bigr\}\vdash \bt^s$
with cardinalities $\big|\bt^s_{\so}\big|\leq1$ and $\big|\bt^s_{\sth}\big|\leq1$.

We call the recurrence relation \r{Az19} a {\it rectangular recurrence relation} because
it contains the monodromy matrix entries $T_{i,j}(z)$ with $1\leq i\leq l$ and $k\leq j\leq n$.
This defines a rectangular submatrix in monodromy matrix with
vertices $T_{1,n}(z)$, $T_{l,n}(z)$, $T_{l,k}$ and $T_{1,k}(z)$. It lies above the diagonal of the monodromy matrix because $l<k$.

\begin{cor}\label{cor1}
Considering the particular case $k=\ell+1$ in the recurrence relation \r{Az19}, we get
\begin{gather*}
\BB\bigl(\bigl\{\bt^s\bigr\}_1^{\ell-1},\bigl\{\bt^\ell,z\bigr\},\bigl\{\bt^s\bigr\}_{\ell+1}^{n-1}\bigr)\\
\qquad=
\sum_{i=1}^\ell\sum_{j=\ell+1}^n\sum_{\rm part}\frac{T_{i,j}(z)\cdot
\BB\bigl(\bigl\{\bt^s\bigr\}_1^{i-1},\bigl\{\bt^s_{\st}\bigr\}_i^{\ell-1},\bt^\ell,
\bigl\{\bt^s_{\st}\bigr\}_{\ell+1}^{j-1},\bigl\{\bt^s\bigr\}_{j}^{n-1}\bigr)}
{\lambda_{\ell+1}(z) g\bigl(z,\bt^{\ell-1}_{\st}\bigr)h\bigl(z,\bt^\ell\bigr)h\bigl(\bt^\ell,z\bigr)g\bigl(\bt^{\ell+1}_{\st},z\bigr)}
\\
\qquad\phantom{=}{}\times
\prod_{s=i}^{\ell-1}\gamma\bigl(\bt^{s}_{\so},\bt^s_{\st}\bigr)
\frac{h\bigl(\bt^{s+1}_{\st},\bt^{s}_{\so}\bigr)}
{g\bigl(\bt^{s}_{\so},\bt^{s-1}_{\st}\bigr)}
\prod_{s=\ell+1}^{j-1}\alpha_s\bigl(\bt^s_{\sth}\bigr)\gamma\bigl(\bt^{s}_{\st},\bt^s_{\sth}\bigr)
\frac{h\bigl(\bt^{s}_{\sth},\bar t^{s-1}_{\st}\bigr)}{g\bigl(\bt^{s+1}_{\st},\bt^{s}_{\sth}\bigr)} .
\end{gather*}
Here the sum over partition is the same as in Theorem~{\rm\ref{rec-pr}}.
\end{cor}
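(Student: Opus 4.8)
The plan is to derive Corollary~\ref{cor1} directly from Theorem~\ref{rec-pr} by setting $k=\ell+1$. When $k=\ell+1$, the block of "unpartitioned" colors $\bt^\ell,\dots,\bt^{k-1}$ collapses to the single color $\bt^\ell$, which stays unsplit (this is the $\bt^\ell_{\st}=\bt^\ell$ clause of the theorem). First I would substitute $k=\ell+1$ into the normalization factor $\mu^k_\ell(z;\bt)$ from \eqref{Az2}: writing it out, $\mu^{\ell+1}_\ell(z;\bt)=\lambda_{\ell+1}(z)\,g(z,\bt^{\ell-1})\,h(\bt^\ell,z)\,h(z,\bt^\ell)\,g(\bt^{\ell+1},z)$. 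But in the recurrence the colors $\bt^{\ell-1}$ and $\bt^{\ell+1}$ are themselves partitioned, so the factor $g(z,\bt^{\ell-1}_{\so})\,g(\bt^{\ell+1}_{\sth},z)$ appearing in $\Xi^{\ell,k}_{i,j}$ (the first line of \eqref{A-X-f}) combines with the $g(z,\bt^{\ell-1})$ and $g(\bt^{\ell+1},z)$ inside $\mu$; since $\bt^{\ell-1}=\bt^{\ell-1}_{\so}\cup\bt^{\ell-1}_{\st}$ and $\bt^{\ell+1}=\bt^{\ell+1}_{\st}\cup\bt^{\ell+1}_{\sth}$, one uses the multiplicativity convention \eqref{eq:convention} to get $g(z,\bt^{\ell-1})/g(z,\bt^{\ell-1}_{\so})=g(z,\bt^{\ell-1}_{\st})$ and similarly for the other factor. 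This exactly produces the denominator $\lambda_{\ell+1}(z)\,g(z,\bt^{\ell-1}_{\st})\,h(z,\bt^\ell)\,h(\bt^\ell,z)\,g(\bt^{\ell+1}_{\st},z)$ displayed in the corollary.

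Next I would unfold the two products in \eqref{A-X-f}. The product $\prod_{s=i}^{\ell-1}\Omr(\bt^s_{\so},\bt^s_{\st}|\bt^{s-1}_{\st},\bt^{s+1}_{\st})$ becomes, using the definition \eqref{Az7} of $\Omr$, the product $\prod_{s=i}^{\ell-1}\gamma(\bt^s_{\so},\bt^s_{\st})\,h(\bt^{s+1}_{\st},\bt^s_{\so})/g(\bt^s_{\so},\bt^{s-1}_{\st})$. Here one must be slightly careful that for $s=\ell-1$ the "upper neighbour" $\bt^{s+1}_{\st}=\bt^\ell_{\st}=\bt^\ell$ since $\bt^\ell$ is unsplit; this matches the corollary's convention that $\bt^{s+1}_{\st}$ at $s=\ell-1$ reads $\bt^\ell$. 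Similarly, the product $\prod_{s=k}^{j-1}\alpha_s(\bt^s_{\sth})\,\Oml(\bt^s_{\st},\bt^s_{\sth}|\bt^{s-1}_{\st},\bt^{s+1}_{\st})$ with $k=\ell+1$ becomes $\prod_{s=\ell+1}^{j-1}\alpha_s(\bt^s_{\sth})\,\gamma(\bt^s_{\st},\bt^s_{\sth})\,h(\bt^s_{\sth},\bt^{s-1}_{\st})/g(\bt^{s+1}_{\st},\bt^s_{\sth})$, again with the convention that at $s=\ell+1$ the term $\bt^{s-1}_{\st}=\bt^\ell_{\st}=\bt^\ell$. Plugging in the definition \eqref{Az7} of $\Oml$ and using $\gamma(u,v)=g(u,v)/h(v,u)$ if desired gives the two products exactly as written in the corollary.

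Finally I would record the argument of the Bethe vector on the right side. In Theorem~\ref{rec-pr}, the Bethe vector carries $\BB(\bt_{\st})$, meaning each color $s$ is replaced by $\bt^s_{\st}$, with cardinalities \eqref{A-car}: $|\bt^s_{\so}|=\Theta(s-i)$ for $s<\ell$ (so $\bt^s_{\st}=\bt^s$ for $s<i$ and $\bt^s_{\st}$ loses one element for $i\le s\le \ell-1$), the color $\ell$ is untouched, and $|\bt^s_{\sth}|=\Theta(j-s-1)$ for $s\ge k=\ell+1$ (so $\bt^s_{\st}$ loses one element for $\ell+1\le s\le j-1$ and $\bt^s_{\st}=\bt^s$ for $s\ge j$). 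This yields precisely $\BB(\{\bt^s\}_1^{i-1},\{\bt^s_{\st}\}_i^{\ell-1},\bt^\ell,\{\bt^s_{\st}\}_{\ell+1}^{j-1},\{\bt^s\}_j^{n-1})$. Assembling the denominator, the two $\gamma$-products, the action $T_{i,j}(z)$, and the Bethe vector, and summing over $1\le i\le \ell$, $\ell+1\le j\le n$ and the same partitions as in Theorem~\ref{rec-pr}, gives the stated formula. There is essentially no obstacle here: the only subtlety worth flagging explicitly is the bookkeeping at the boundary colors $s=\ell-1$ and $s=\ell+1$, where the "neighbouring" subsets refer to the unpartitioned set $\bt^\ell$ rather than to a $\bt_{\st}$-part, and the cancellation of the $g$-factors between $\Xi^{\ell,k}_{i,j}$ and $\mu^{\ell+1}_\ell$; both are immediate once one writes everything out using the product convention \eqref{eq:convention}.
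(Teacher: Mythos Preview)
Your proposal is correct and follows exactly the approach the paper takes: the corollary is stated in the paper as an immediate specialization of Theorem~\ref{rec-pr} to $k=\ell+1$, with no further argument given. Your careful bookkeeping of the cancellation between the $g$-factors in $\Xi^{\ell,\ell+1}_{i,j}$ and those in $\mu^{\ell+1}_\ell$, together with the unfolding of $\Omr$ and $\Oml$ via \eqref{Az7}, is precisely the elementary verification the paper leaves to the reader.
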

Such type of recurrence relations for arbitrary $\ell$ were written for the first time in \cite{LP22} for the case of integrable
models associated to $U_q(\mathfrak{gl}_n)$. In the case of $\mathfrak{gl}_n$-invariant integrable
models, these recurrence relations for the cases $\ell=1$, $k=2$ and $\ell=n-1$, $k=n$
were presented in~\cite{HLPRS-SP17}.
Analogous recurrence relations for the cases
$\ell=2$, $k=3$ and $\ell=n-2$, $k=n-1$ can be found in \cite{G23}.
Examples of this recurrence relation can also be found in \cite{BPRS13} for the case of $\mathfrak{gl}_3$.

\subsection[Embedding Y(gl\_a)otimes Y(gl\_n-a) Y(gl\_n)]{Embedding $\boldsymbol{Y(\mathfrak{gl}_{a})\otimes Y(\mathfrak{gl}_{n-a})\hookrightarrow Y(\mathfrak{gl}_{n})}$}\label{sect:red-gl.x.gl}

Suppose the Bethe vector does not involve any spectral parameter
of some given color $1\leq a\leq n-1$. In this case, the
 reduced
recurrence relations will imply the following proposition.

\begin{prop}\label{emb-gl}
The off-shell Bethe vectors \smash{$\BBg\bigl(\bigl\{\bt^s\bigr\}_1^{a-1},\varnothing,\bigl\{\bt^s\bigr\}_{a+1}^{n-1}\bigr)$}
has a representation as~the product of $\mathfrak{gl}_{a}$ pre-Bethe vectors
 \smash{$\cBg\bigl(\bigl\{\bt^s\bigr\}_1^{a-1}\bigr) \in Y(\mathfrak{gl}_{a})$} and $\mathfrak{gl}_{n-a}$ pre-Bethe vectors
\linebreak  \smash{$\cBg\bigl(\bigl\{\bt^s\bigr\}_{a+1}^{n-1}\bigr) \in Y(\mathfrak{gl}_{n-a})$} acting on the vacuum vector $\rvec$
 \begin{align}
 \BBg\bigl(\bigl\{\bt^s\bigr\}_1^{a-1},\varnothing,\bigl\{\bt^s\bigr\}_{a+1}^{n-1}\bigr)&=
 \cBg\bigl(\bigl\{\bt^s\bigr\}_1^{a-1}\bigr)\cdot \cBg\bigl(\bigl\{\bt^s\bigr\}_{a+1}^{n-1}\bigr) |0\rangle\nonumber \\
 &=\cBg\bigl(\bigl\{\bt^s\bigr\}_{a+1}^{n-1}\bigr)\cdot
 \cBg\bigl(\bigl\{\bt^s\bigr\}_1^{a-1}\bigr) |0\rangle .\label{emg1}
 \end{align}
\end{prop}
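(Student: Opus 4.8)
The plan is to deduce Proposition~\ref{emb-gl} from Theorem~\ref{rec-pr} by an inductive build-up of the Bethe vector, adding the Bethe parameters of colors $a+1,\dots,n-1$ one at a time to the ``small'' vector $\BBg\bigl(\bigl\{\bt^s\bigr\}_1^{a-1}\bigr)$, which we first interpret as a $\mathfrak{gl}_a$ Bethe vector via the obvious embedding $Y(\mathfrak{gl}_a)\hookrightarrow Y(\mathfrak{gl}_n)$ on the first $a$ indices. The key observation is that when $\bt^a=\varnothing$, the recurrence relation \r{Az19} with $\ell=a$, say for $k=a+1$ (Corollary~\ref{cor1}), drastically simplifies: since every subset $\bt^a_{\so},\bt^a_{\st},\bt^a_{\sth}$ is forced to be empty, the summation over $i=1,\dots,a$ collapses, and one checks that the cardinality constraints \r{A-car} together with the requirement that no empty set be partitioned leave \emph{only} the term $i=a$. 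Thus adding a spectral parameter of color $a+1,\dots,n-1$ to the vector only ever involves the monodromy entries $T_{i,j}(z)$ with $a+1\le i\le j\le n$, i.e.\ entries of the subalgebra $Y(\mathfrak{gl}_{n-a})$ sitting on indices $\{a+1,\dots,n\}$.

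First I would set up the induction on $r_{a+1}+\dots+r_{n-1}$, the total number of Bethe parameters of colors $\ge a+1$; the base case is $\BBg\bigl(\bigl\{\bt^s\bigr\}_1^{a-1},\varnothing,\dots,\varnothing\bigr)=\cBg\bigl(\bigl\{\bt^s\bigr\}_1^{a-1}\bigr)\rvec$, which manifestly lies in $Y(\mathfrak{gl}_a)$ acting on $\rvec$. For the inductive step, pick the largest color $s_0\ge a+1$ with $\bt^{s_0}\ne\varnothing$ and single out one parameter $z=t^{s_0}_{r_{s_0}}$ using $\Z^{s_0+1}_{s_0}$; apply the reduced form of \r{Az19} just described. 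Because the colors $1,\dots,a-1$ are never touched (no parameter of color $\le a-1$ is ever added in this process, and the $T_{i,j}(z)$ that appear have $i,j\ge a+1$ so they commute through the $Y(\mathfrak{gl}_a)$ part by the block structure of \r{rtt}), each term on the right is of the form $\bigl[\text{scalar}\bigr]\cdot\cBg\bigl(\bigl\{\bt^s\bigr\}_1^{a-1}\bigr)\cdot T_{i,j}(z)\cdot\BBg^{(n-a)}(\dots)\rvec$ with smaller total count of high-color parameters, where $\BBg^{(n-a)}$ denotes the $Y(\mathfrak{gl}_{n-a})$ Bethe vector on the shifted indices. By the induction hypothesis the inner vector factorises, and collecting all terms assembles precisely $\cBg\bigl(\bigl\{\bt^s\bigr\}_1^{a-1}\bigr)\cdot\cBg\bigl(\bigl\{\bt^s\bigr\}_{a+1}^{n-1}\bigr)\rvec$, since the sum over partitions and monodromy entries produced by the reduced recurrence is exactly the recurrence that defines the $\mathfrak{gl}_{n-a}$ Bethe vector with the extra parameter $z$ of color $s_0-a$.

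The two orderings in \r{emg1} then follow from the fact that $\bigl[\,\cBg\bigl(\bigl\{\bt^s\bigr\}_1^{a-1}\bigr),\ \cBg\bigl(\bigl\{\bt^s\bigr\}_{a+1}^{n-1}\bigr)\,\bigr]$ involves only commutators of $T_{i,j}(z)$ with $i,j\le a$ against $T_{k,l}(w)$ with $k,l\ge a+1$; every term in the right-hand side of \r{rtt} for such indices carries a Kronecker $\delta$ that vanishes, so the two pre-Bethe vectors commute as operators and either order applied to $\rvec$ gives the same vector. The main obstacle I anticipate is purely bookkeeping: verifying that the scalar factors $\Xi^{\ell,k}_{i,j}$ in \r{Az19}, once specialised to $\bt^a=\varnothing$ and restricted to the surviving terms, reorganise \emph{exactly} into the coefficients of the $\mathfrak{gl}_{n-a}$ recurrence relation (with the spectral-parameter shifts and the $\alpha_s$, $g$, $h$ factors matching after reindexing $s\mapsto s-a$). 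This is a finite but delicate identity among products of the functions $f,g,h$ and $\alpha_s$; I would verify it first for $\mathfrak{gl}_3$ with $a=1$ and $a=2$ to fix conventions, then in general. Everything else — the commutation of the two blocks, the collapse of the $i$-summation, the induction scaffolding — is routine given Theorem~\ref{rec-pr}.
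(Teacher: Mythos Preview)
Your overall plan—deduce the factorisation from Theorem~\ref{rec-pr} by analysing how the recurrence simplifies when $\bt^a=\varnothing$—is the paper's approach. The paper treats the two ranges $1\leq\ell<k\leq a$ and $a+1\leq\ell<k\leq n$ separately and shows that in each case the $\mathfrak{gl}_n$ recurrence collapses to the corresponding $\mathfrak{gl}_a$ or $\mathfrak{gl}_{n-a}$ one (the emptiness of $\bt^a$ forces, via \r{A-car}, the restriction $j\leq a$ in the first range and $i\geq a+1$ in the second). Your induction over high-color cardinalities implements the second of these ranges, but there are two real problems.

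First, your base case is not ``manifest''. The vector $\BBg\bigl(\bigl\{\bt^s\bigr\}_1^{a-1},\varnothing,\dots,\varnothing\bigr)$ is a $\mathfrak{gl}_n$ Bethe vector, a priori built from $T_{i,j}(z)$ with $1\leq i<j\leq n$; the claim that it equals the $\mathfrak{gl}_a$ pre-Bethe vector (built only from $T_{i,j}$ with $j\leq a$) applied to $\rvec$ is exactly half of what Proposition~\ref{emb-gl} asserts. You must run the same kind of argument on the range $1\leq\ell<k\leq a$: since $\big|\bt^a_{\sth}\big|=\Theta(j-a-1)$ has to vanish, the $j$-sum in \r{Az19} is cut to $j\leq a$, and the $\mathfrak{gl}_n$ recurrence becomes the $\mathfrak{gl}_a$ one. (Your first paragraph also has index slips: $\Z_a^{a+1}$ adds $z$ to color $a$, not to colors $\geq a+1$; and the surviving range is $i\geq a+1$, not the single term $i=a$.)

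Second, and more seriously, your argument for the second ordering in \r{emg1} is incorrect. For $\mathfrak{gl}_n$ the relation \r{rtt} contains \emph{no} Kronecker deltas—the second line only appears for $\mathfrak{o}_{2n+1}$. For $i,j\leq a$ and $k,l\geq a+1$ one has $[T_{i,j}(u),T_{k,l}(v)]=g(u,v)\bigl(T_{k,j}(v)T_{i,l}(u)-T_{k,j}(u)T_{i,l}(v)\bigr)\neq 0$, and the paper states explicitly in Section~\ref{sect:prelim} that these two naive block subalgebras of $Y(\mathfrak{gl}_n)$ do \emph{not} commute. Hence the pre-Bethe vectors are not commuting operators; the equality of the two orderings on $\rvec$ must be obtained by rerunning the recurrence argument with the roles reversed (build up colors $1,\dots,a-1$ on top of the $\mathfrak{gl}_{n-a}$ vector), which is what the paper means by ``analogously''.
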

\begin{proof}
We will prove only the first equality in \r{emg1}. The second equality can be proved analogously.
Let us consider the recurrence
relations \r{Az19} for two different ranges of the indices~$\ell$ and $k$:
$1\leq \ell<k\leq a$ and $a+1\leq \ell<k\leq n$.
These ranges for $k$ and $\ell$ ensure that the set~$\bt^a$ remains empty.
Since $\BBg(\varnothing)=\rvec$ (i.e., $\cBg(\varnothing)=1$), the recurrence relations will prove the statement of \eqref{emg1} by induction on the cardinalities.
\begin{itemize}\itemsep=0pt
\item $1\leq \ell<k\leq a$.
The cardinality of the subset $\bt^a_{\sth}$ given by \r{A-car}
is equal to $\Theta(j-a-1)$. Since
this subset is empty, this restricts the summation
over $j$ to $k\leq j\leq a$. This means that the recurrence relation
for $\Z^k_\ell\cdot
\BBg\bigl(\bt\bigr)|_{\bt^a=\varnothing}$ for $1\leq \ell<k\leq a$
implies only the monodromy entries $T_{i,j}(z)$ for $1\leq i\leq \ell$ and
$k\leq j\leq a$. The recurrence relation $\Z^a_{1}\cdot
\BBg\bigl(\bt\bigr)|_{\bt^a=\varnothing}$ will have only one term (there is no summation over
partitions) corresponding to the action of
monodromy entry $T_{1,a}(z)$ on the off-shell Bethe vector.
Then the action of the zero mode operators $\sT_{\ell+1,\ell}$ for
$\ell=1,\dots,a-1$ onto Bethe vector \r{emg1}
\begin{gather*}
\sT_{\ell+1,\ell}\cdot\BBg\bigl(\bigl\{\bt^s\bigr\}_1^{a-1},\varnothing,\bigl\{\bt^s\bigr\}_{a+1}^{n-1}\bigr)\\
\qquad=
\sum_{\rm part}\bigl(\vk_{\ell+1} \alpha_\ell\bigl(\bt^\ell_{\so}\bigr)
\Oml\bigl(\bt^\ell_{\st},\bt^\ell_{\so}|\bt^{\ell-1},\bt^{\ell+1}\bigr)\\
\phantom{\qquad=}{}-\vk_\ell \Omr\bigl(\bt^\ell_{\so},\bt^\ell_{\st}|\bt^{\ell-1},\bt^{\ell+1}\bigr)\bigr)
\BBg\bigl(\bigl\{\bt^s\bigr\}_1^{\ell-1},\bt^\ell_{\st},\bigl\{\bt^s\bigr\}_{\ell+1}^{a-1},
\varnothing,\bigl\{\bt^s\bigr\}_{a+1}^{n-1}\bigr)
\end{gather*}
does not affect the sets of Bethe parameters $\bigl\{\bt^s\bigr\}_{a+1}^{n-1}$
and so coincides with the action of the zero mode operators
onto the Bethe vectors in $\mathfrak{gl}_{a}$-invariant model.
This shows that the recurrence relations for $\Z^k_\ell\cdot
\BBg\bigl(\bt\bigr)|_{\bt^a=\varnothing}$ for $1\leq \ell< k\leq a$
 coincides with the $\mathfrak{gl}_{a}$ recurrence relation for the vector
\smash{$\BBg\bigl(\bigl\{\bt\bigr\}\raisebox{-1pt}{${\vphantom{\big|}}_1^{a-1}$}\bigr)$}.

\item $a+1\leq \ell<k\leq n$. In this case, the cardinality of the empty subset
$\bt^a_{\so}=\varnothing$, given by the equalities
\r{A-car}, yields the restriction for the index $i\geq a+1$.
The summation over $i$ and~$j$
in the right-hand side of the recurrence relation for $\Z^k_\ell\cdot
\BBg\bigl(\bt\bigr)|_{\bt^a=\varnothing}$ will be restricted
to the range $a+1\leq i\leq \ell<k\leq j\leq n$.
The Bethe vector $\Z^n_{a+1}\cdot \BBg\bigl(\bt\bigr)$ will be proportional
to the action of the monodromy matrix entry $T_{a+1,n}(z)\cdot\BBg\bigl(\bt\bigr)$
(there is no sum over partitions in this action) and the
action of the zero mode operators $\sT_{\ell+1,\ell}$ for~${\ell=a+1,\dots,n-1}$ onto the Bethe vector \r{emg1} coincides with
$\mathfrak{gl}_{n-a}$-type actions \r{Az6}.
The recurrence relation~\r{Az19} in this case
will not affect the sets of Bethe parameters \smash{$\bigl\{\bt^s\bigr\}_1^{a-1}$} and will
coincide with the $\mathfrak{gl}_{n-a}$-type
recurrence relation for the Bethe vector \smash{$\Z^k_\ell\cdot\BBg\bigl(\bigl\{\bt^s\bigr\}_{a+1}^{n-1}\bigr)$},
see~\eqref{Az19}.\looseness=-1\hfill \qed
\end{itemize}\renewcommand{\qed}{}
\end{proof}
\begin{rem}
Proposition~\ref{emb-gl} is a direct consequence
 of the projection method, when
 the off-shell Bethe vectors are expressed in terms of the Cartan--Weyl generators of the quantum affine algebra $U_q(\widehat{\mathfrak{gl}}_n)$ {\rm\cite{KhP-Kyoto}} or
 Yangian double $\mathcal{D}Y(\mathfrak{gl}_n)$ {\rm\cite{HLPRS17}}.\end{rem}

{\bf Splitting property of the Bethe vectors.}
Let us define the operation $\Z^{a+1}_a\bigl(t^a_{1}\bigr)$ as the operation which
adds the parameter $t^a_{1}$
of the color $a$
to the set \smash{$\bt=\bigl\{\bigl\{\bt^s\bigr\}^{a-1}_1,\varnothing,\bigl\{\bt^s\bigr\}_{a+1}^{n-1}\bigr\}$} of the Bethe vector \smash{$\BBg\bigl(\bigl\{\bt^s\bigr\}^{a-1}_1,\varnothing,\bigl\{\bt^s\bigr\}_{a+1}^{n-1}\bigr)$}
according to definition \r{Az3}. A direct consequence of
Proposition~\ref{emb-gl} and the rectangular recurrence relations~\r{Az19}
is the presentation of the Bethe vector
\smash{$\BBg\bigl(\bigl\{\bt^s\bigr\}^{a-1}_1,t^a_1,\bigl\{\bt^s\bigr\}_{a+1}^{n-1}\bigr)$} in the form
\begin{align}
\BBg\bigl(\bigl\{\bt^s\bigr\}^{a-1}_1,\{t^a_{1}\},\bigl\{\bt^s\bigr\}_{a+1}^{n-1}\bigr)={}&
\Z^{a+1}_a\bigl(t^a_{1}\bigr)\bigl( \cBg\bigl(\bigl\{\bt^s\bigr\}_1^{a-1}\bigr)\cdot \cBg\bigl(\bigl\{\bt^s\bigr\}_{a+1}^{n-1}\bigr)\bigr) |0\rangle\nonumber\\
={}&\frac{1}{\mu^{a+1}_a\bigl(t^a_1;\bt\bigr)}
\sum_{i=1}^a\sum_{j=a+1}^n \sum_{\rm part}
\Xi^{\ell,k}_{i,j}\bigl(t^a_{1};\bt_{\so},\bt_{\st},\bt_{\sth}\bigr)
T_{i,j}\bigl(t^a_{1}\bigr)\nonumber\\
&\times\bigl( \cBg\bigl(\bigl\{\bt_{\st}^s\bigr\}_1^{a-1}\bigr)\cdot
\cBg\bigl(\bigl\{\bt_{\st}^s\bigr\}_{a+1}^{n-1}\bigr)\bigr) |0\rangle .\label{emg2}
\end{align}
Iterating the relation \r{emg2} one gets the following presentation for
the off-shell Bethe vector in the generic $\mathfrak{gl}_n$-invariant
integrable model:
\begin{equation}\label{emg3}
\BBg\bigl(\bt\bigr)=\Z^{a+1}_a(t^a_1)\cdot \Z^{a+1}_a(t^a_2)\cdots
\Z^{a+1}_a(t^a_{r_a})
\bigl( \cBg\bigl(\bigl\{\bt^s\bigr\}_1^{a-1}\bigr)\cdot \cBg\bigl(\bigl\{\bt^s\bigr\}_{a+1}^{n-1}\bigr)\bigr) |0\rangle .
\end{equation}
The relation \r{emg3} was called in the papers \cite{KT1,KT2} the {\it splitting
property} of the off-shell Bethe vectors, see, e.g., \cite[Proposition 3.1]{KT2}.
One can also compare the relation \r{emg1} with the \cite[equation~(3.9)]{KT2}.
These formulas were proved in these papers
by a different approach using
the evaluation homomorphism for the Yangian
$Y(\mathfrak{gl}_n)$ and the trace formula for the off-shell Bethe vectors.

\begin{exam}\label{Exam31}
Let us consider an example of the recurrence relation \r{emg2} in the
case ${n=4}$ and~${a=2}$. This relation describes the off-shell Bethe vector
$\BBg\bigl(\bt^1,t^2,\bt^3\bigr)$ in the form of the actions of the monodromy
entries $T_{i,j}\bigl(t^2\bigr)$ with $i=1,2$ and $j=3,4$ to the Bethe vector $\BBg\bigl(\bt^1_{\st},\varnothing,\bt^3_{\st}\bigr)$. Let us stress that $t^2$ denotes a single Bethe parameter, not a set.
Using definitions~\r{Az2}
and~\r{A-X-f}, we can write the Bethe vector $\BBg\bigl(\bt^1,t^2,\bt^3\bigr)$
as follows:
\begin{align*}
\BBg\bigl(\bt^1,t^2,\bt^3\bigr)={}&\sum_{i=1}^2\sum_{j=3}^4\sum_{\rm part}
\frac{\alpha_3\bigl(\bt^3_{\sth}\bigr)}{\lambda_3\bigl(t^2\bigr)}
\frac{g\bigl(\bt^1_{\so},\bt^1_{\st}\bigr)}{g\bigl(t^2,\bt^1_{\st}\bigr) h\bigl(\bt^1_{\st},\bt^1_{\so}\bigr)}
\frac{g\bigl(\bt^3_{\st},\bt^3_{\sth}\bigr)}{g\bigl(\bt^3_{\st},t^2\bigr) h\bigl(\bt^3_{\sth},\bt^3_{\st}\bigr)}
 T_{i,j}\bigl(t^2\bigr)\BBg\bigl(\bt^1_{\st},\varnothing,\bt^3_{\st}\bigr) ,
\end{align*}
where sum goes over partitions $\bigl\{\bt^1_{\so},\bt^1_{\st}\bigr\}\vdash\bt^1$
with cardinality $\big|\bt^1_{\so}\big|=\delta_{i,1}$ and
$\bigl\{\bt^3_{\st},\bt^3_{\sth}\bigr\}\vdash\bt^3$
with cardinality $\big|\bt^3_{\sth}\big|=\delta_{j,4}$. In the case when the sets
$\bt^1$ and $\bt^3$ both have cardinality $1$, the Bethe vector
$\BBg\bigl(t^1,t^2,t^3\bigr)$ can be obtained from this recurrence relation and is equal to
\begin{align*}
\begin{split}
\BBg\bigl(t^1,t^2,t^3\bigr)={}&\frac{1}{\lambda_2\bigl(t^1\bigr) \lambda_3\bigl(t^2\bigr) \lambda_4\bigl(t^3\bigr)
g\bigl(t^2,t^1\bigr) g\bigl(t^3,t^2\bigr)}\\
&\times\bigl(T_{2,3}\bigl(t^2\bigr) T_{1,2}\bigl(t^1\bigr) T_{3,4}\bigl(t^3\bigr)+
g\bigl(t^2,t^1\bigr) T_{1,3}\bigl(t^2\bigr) T_{3,4}\bigl(t^3\bigr) T_{2,2}\bigl(t^1\bigr)+
g\bigl(t^3,t^2\bigr)\\
&\times T_{2,4}\bigl(t^2\bigr) T_{1,2}\bigl(t^1\bigr) T_{3,3}\bigl(t^3\bigr)+
g\bigl(t^2,t^1\bigr) g\bigl(t^3,t^2\bigr) T_{1,4}\bigl(t^2\bigr) T_{3,2}\bigl(t^3\bigr) T_{1,1}\bigl(t^1\bigr)\bigr)\rvec .
\end{split}
\end{align*}
Taking into account \r{rtt}, this Bethe vector coincides with the example given at the end of Section~3 in~\cite{KT1} up to normalization and the terms which are annihilated on the vacuum vector~$\rvec$.
\end{exam}

\section[Bethe vectors in o\_2n+1-invariant models]{Bethe vectors in $\boldsymbol{\mathfrak{o}_{2n+1}}$-invariant models}\label{sect:oN}

The off-shell Bethe vectors in $\mathfrak{o}_{2n+1}$-invariant models
depend on the sets of Bethe parameters~${\bt=\bigl(\bt^0,\bt^1,\dots,\bt^{n-1}\bigr)}$
with cardinalities $\big|\bt^s\big|=r_s$ for $s=0,1,\dots,n-1$.

We normalize the $\mathfrak{o}_{2n+1}$-invariant off-shell Bethe vectors in such a way
that the action of the monodromy matrix element $T_{-n,n}(z)$ onto
$\BB\bigl(\bt\bigr)$ is given by the equality
\begin{equation}\label{Bz1}
T_{-n,n}(z)\cdot\BB\bigl(\bt\bigr)= \mu_{-n}^n\bigl(z;\bt\bigr) \BB(\bw) ,
\end{equation}
where
$\bw=\bigl(\bw^0,\bw^1,\dots,\bw^{n-1}\bigr)$ is a collection of extended
sets of Bethe parameters such that
\begin{equation}\label{Bz2}
\bw^s=\bigl\{\bt^s,z,z_s\bigr\},\qquad z_s=z-c(s-1/2)
\end{equation}
and the normalization factor $\mu_{-n}^n\bigl(z;\bt\bigr)$ is given by the expression
\begin{equation}\label{Bz3}
\mu_{-n}^n\bigl(z;\bt\bigr)=-\kappa_n \lambda_n(z)
\frac{g\bigl(z_1,\bar t^0\bigr)}{h\bigl(z,\bar t^{0}\bigr)}
\frac{h\bigl(z,\bar t^{n-1}\bigr)}{g\bigl(z_n,\bar t^{n-1}\bigr)} .
\end{equation}

Besides the sets $\bw^s$ of cardinalities $r_s+2$ which are defined in \r{Bz2},
we will also use the sets~$\bu^s$ and $\bv^s$ of cardinalities $r_s+1$ given by
$
\bu^s=\bigl\{\bt^s,z\bigr\}$, $ \bv^s=\bigl\{\bt^s,z_s\bigr\}$, $ \bw^s=\bigl\{\bu^s,z_s\bigr\}=\bigl\{\bv^s,z\bigr\}$.
For $-n\leq \ell<k\leq n$, let $\Z_\ell^k$ be an operator which extend the sets of
 Bethe parameters depending on the values of $\ell$ and $k$ according to the
rules
\[
\Z_\ell^k\cdot\BB\bigl(\bt\bigr)=\begin{cases}
\BB\bigl(\bigl\{\bt^s\bigr\}_0^{\ell-1},\bigl\{\bu^s\bigr\}_\ell^{k-1},\bigl\{\bt^s\bigr\}_k^{n-1}\bigr),&  0\leq \ell<k\leq n,\\[1mm]
\BB\bigl(\bigl\{\bt^s\bigr\}_0^{-k-1},\bigl\{\bv^s\bigr\}_{-k}^{-\ell-1},\bigl\{\bt^s\bigr\}_{-\ell}^{n-1}\bigr),&  0\leq -k<-\ell\leq n,\\[1mm]
\BB\bigl(\bigl\{\bw^s\bigr\}_0^{-\ell-1},\bigl\{\bu^s\bigr\}_{-\ell}^{k-1},\bigl\{\bt^s\bigr\}_{k}^{n-1}\bigr),& 0\leq -\ell\leq k\leq n,\\[1mm]
\BB\bigl(\bigl\{\bw^s\bigr\}_0^{k-1},\bigl\{\bv^s\bigr\}_{k}^{-\ell-1},\bigl\{\bt^s\bigr\}_{-\ell}^{n-1}\bigr),&  0\leq k\leq- \ell\leq n.
\end{cases}
\]
The action \r{Bz1} of the monodromy matrix entry $T_{-n,n}(z)$
 can be presented as the action
of the operator $\Z_{-n}^n$ onto off-shell Bethe vector
\begin{equation}\label{Bz6}
\Z_{-n}^n\cdot \BB\bigl(\bt\bigr)=
\frac{1}{\mu_{-n}^n\bigl(z;\bt\bigr)}\ T_{-n,n}(z)\cdot\BB\bigl(\bt\bigr) .
\end{equation}

The commutation relations \r{zm1} between the zero mode operators $\sT_{\ell+1,\ell}$ and the
monodromy matrix elements take the form for $0\leq \ell\leq n-1$
\begin{equation}\label{Bz0}
[\sT_{\ell+1,\ell} ,T_{i,j}(z)]=\vk_{\ell+1}
(\delta_{\ell,j-1}-\delta_{\ell,-j}) T_{i,j-1}(z)
-\vk_\ell(\delta_{\ell,i}-\delta_{\ell,-i-1}) T_{i+1,j}(z) .
\end{equation}

The action of the monodromy entry $T_{-n,n}(z)$ \r{Bz1}
as well as the action of the zero mode operators $\sT_{\ell+1,\ell}$
onto off-shell Bethe vectors can be calculated in the framework
 of the projection method (see details in \cite[Section~4.1]{LP2}).
The action of the zero mode operators is{\samepage
\begin{align}
\sT_{\ell+1,\ell}\cdot\BB\bigl(\bt\bigr)={}&\sum_{\rm part}\bigl(\vk_{\ell+1}
\alpha_\ell\bigl(\bt^\ell_{\so}\bigr) \Oml_\ell\bigl(\bt^\ell_{\st},\bt^\ell_{\so}|
\bt^{\ell-1},\bt^{\ell+1}\bigr)\nonumber\\
&- \vk_\ell \Omr_\ell\bigl(\bt^\ell_{\so},\bt^\ell_{\st}|\bt^{\ell-1},\bt^{\ell+1}\bigr)\bigr)
\BB\bigl(\bigl\{\bt^s\bigr\}_1^{\ell-1},\bt^\ell_{\st},\bigl\{\bt^s\bigr\}_{\ell+1}^{n-1}\bigr) ,\label{Bz8}
\end{align}}%

\pagebreak

\noindent
where the sum is over partitions with cardinalities $\big|\bt^\ell_{\so}\big|=1$.
In \r{Bz8}, the rational functions
$\Oml_\ell$ and
$\Omr_\ell$ have
expressions similar to \r{Az7}
\begin{gather}
\Oml_\ell\bigl(\bt^\ell_{\st},\bt^\ell_{\so}|\bt^{\ell-1},\bt^{\ell+1}\bigr)=
\gamma_\ell\bigl(\bt^\ell_{\st},\bt^\ell_{\so}\bigr)
 \frac{h\bigl(\bt^\ell_{\so},\bt^{\ell-1}\bigr)}{g\bigl(\bt^{\ell+1},\bt^\ell_{\so}\bigr)},\nonumber\\
\Omr_\ell\bigl(\bt^\ell_{\so},\bt^\ell_{\st}|\bt^{\ell-1},\bt^{\ell+1}\bigr)=
\gamma_\ell\bigl(\bt^\ell_{\so},\bt^\ell_{\st}\bigr)
 \frac{h\bigl(\bt^{\ell+1},\bt^\ell_{\so}\bigr)}{g\bigl(\bt^\ell_{\so},\bt^{\ell-1}\bigr)} ,\label{Bz10}
\end{gather}
where
\[
\gamma_\ell(u,v)=
\begin{cases}
\fgo(u,v)=\dfrac{u-v+c/2}{u-v},&\ell=0,\\
\gamma(u,v)=\dfrac{g(u,v)}{h(v,u)}=\dfrac{c^2}{(u-v)(v-u+c)},
&\ell=1,\dots,n-1.
\end{cases}
\]

\subsection[Rectangular recurrence relations for o\_2n+1]{Rectangular recurrence relations for $\boldsymbol{\mathfrak{o}_{2n+1}}$}\label{sect:rec-oN}

The main result of the paper is formulated in Theorem~\ref{main-th} below.
The proof of this theorem is similar to the proof of Theorem~\ref{rec-pr} given in Appendix~\ref{ApA}. In Appendix \ref{ApB}, we will sketch its proof.

For the formulation of the statement of the theorem, we need following
notations:
\begin{itemize}\itemsep=0pt
\item the sign factor $\sigma_m$
\[
\sigma_m=2\Theta(m-1)-1=\begin{cases}
1,  &m>0,\\
-1,  &m\leq 0,
\end{cases}
\]
which satisfy the property
$
\sigma_{m+1}=-\sigma_{-m}$,
\item the functions $\psi_\ell\bigl(z;\bt\bigr)$, $\phi_k\bigl(z;\bt\bigr)$, $\mu^k_\ell\bigl(z;\bt\bigr)$
\begin{gather}
\psi_\ell\bigl(z;\bt\bigr)=\begin{cases}
g\bigl(z,\bt^{\ell-1}\bigr)\ h\bigl(\bt^\ell,z\bigr), &0<\ell<n,\\
g\bigl(z_0,\bt^0\bigr), &\ell=0,\\
\dfrac{g(\bt^{-\ell},z_{-\ell})}{g(z_{-\ell},\bt^{-\ell-1})}, &-n\leq \ell<0,
\end{cases}\label{psi}
\\
\phi_k\bigl(z;\bt\bigr)=\begin{cases}
h\bigl(z,\bt^{k-1}\bigr)\ g\bigl(\bt^k,z\bigr), &0<k\leq n,\\
g(z,\bt^0), &k=0,\\
\dfrac{g(z_{-k-1},\bt^{-k-1})}{g(\bt^{-k},z_{-k-1})}, &-n< k<0,\label{phi}
\end{cases}
\\
\mu^k_\ell\bigl(z;\bt\bigr)=\sigma_{-\ell-k}
(\kappa_k)^{\delta_{k,-\ell}} \lambda_k(z) \psi_\ell\bigl(z;\bt\bigr) \phi_k\bigl(z;\bt\bigr)
\left(\dfrac{g\bigl(z_1,\bt^0\bigr)}{h\bigl(z,\bt^0\bigr)}\right)^{\delta_{\ell<0, k>0}} ,\nonumber
\end{gather}
where $\kappa_k=k-1/2$ and $\delta_{\rm condition}$ is equal to 1 if
 "condition" is satisfied and to 0 otherwise.
\item the partitions
\begin{gather}
\big|\bt^s_{\so}\big|=\begin{cases}
\Theta(\ell)\bigl(\Theta(s-i)+\Theta(-i-s-1)\bigr),& s<|\ell|,\\
\Theta(-i-s-1),& s\geq |\ell|,\label{part-so}
\end{cases}
\\
\big|\bt^s_{\sth}\big|=\begin{cases}
\Theta(-k)(\Theta(j+s)+\Theta(j-s-1)),& s<|k|,\\
\Theta(j-s-1),& s\geq |k|,
\end{cases}\label{part-sth}
\\
\big|\bt^s_{\st}\big|=|\bt^s|-\big|\bt^s_{\so}\big|-\big|\bt^s_{\sth}\big| ,\label{part-st}
\end{gather}
\item the functions $\Gml_{a,b}\bigl(\bt_{\st},\bt_{\so}\bigr)$
and $\Gmr_{a,b}\bigl(\bt_{\so},\bt_{\st}\bigr)$
for $a,b=0,1,\dots,n$
\textit{which depend on the partitions}
\begin{gather*}
\Gml_{a,b}\bigl(\bt_{\st},\bt_{\so}\bigr)
=\prod_{s=a}^{b-1}\alpha_s\bigl(\bt^s_{\so}\bigr)\Oml_s\bigl(\bt^s_{\st},\bt^s_{\so}|\bt^{s-1}_{\st},\bt^{s+1}_{\st}\bigr),\\
\Gmr_{a,b}\bigl(\bt_{\so},\bt_{\st}\bigr)
=\prod_{s=a}^{b-1}\Omr_s\bigl(\bt^s_{\so},\bt^s_{\st}|\bt^{s-1}_{\st},\bt^{s+1}_{\st}\bigr),
\end{gather*}
and the functions $\Xi^{\ell,k}_{i,j}\bigl(z;\bt_{\so},\bt_{\st},\bt_{\sth}\bigr)$
\begin{align}
\Xi^{\ell,k}_{i,j}\bigl(z;\bt_{\so},\bt_{\st},\bt_{\sth}\bigr)&=
\psi_\ell\bigl(z,\bt_{\so}\bigr) \phi_k\bigl(z,\bt_{\sth}\bigr) \Gmr_{0,n}\bigl(\bt_{\so},\bt_{{\st},{\sth}}\bigr)
\Gml_{0,n}\bigl(\bt_{\st},\bt_{{\sth}}\bigr)\nonumber\\
& =
\psi_\ell\bigl(z,\bt_{\so}\bigr) \phi_k\bigl(z,\bt_{\sth}\bigr)
\Gmr_{0,n}\bigl(\bt_{\so},\bt_{{\st}}\bigr)
\Gml_{0,n}\bigl(\bt_{{\so},{\st}},\bt_{{\sth}}\bigr) ,\label{X-fun}
\end{align}
where the equality between the first and second lines in \r{X-fun}
directly follows from the definitions \r{Bz10},
with the notation $\bt_{\so,\st} = \bigl\{\bt_{\so}, \bt_{\st}\bigr\}$, and $\bt_{\st,\sth} = \bigl\{\bt_{\st}, \bt_{\sth}\bigr\}$.
The right hand side of \eqref{X-fun} depends on $i$ and $j$ through the cardinalities of the subsets $\bt_{\so}$, $\bt_{{\st}}$, $\bt_{{\sth}}$, see equations~\eqref{part-so} and \eqref{part-sth}.
Note that $\Gml_{a,b}\bigl(\bt_{\st},\bt_{\so}\bigr)=\Gmr_{a,b}\bigl(\bt_{\so},\bt_{\st}\bigr) = 1$
for $a\geq b$.
\end{itemize}

\begin{thm}\label{main-th}
For any $\ell$, $k$ such that $-n\leq \ell <k\leq n$, the off-shell Bethe vector
$\Z^k_\ell\cdot\BB\bigl(\bt\bigr)$ satisfies the rectangular recurrence relation
\begin{align}
\Z^k_\ell\cdot\BB\bigl(\bt\bigr)={}&\frac{1}{\mu^k_\ell\bigl(z;\bt\bigr)}\sum_{i=-n}^\ell
\sum_{j=k}^n\sum_{\rm part}
\Xi^{\ell,k}_{i,j}\bigl(z;\bt_{\so},\bt_{\st},\bt_{\sth}\bigr) (\sigma_{-i})^{\delta_{\ell\geq0}}
(\sigma_{j})^{\delta_{k\leq 0}} T_{i,j}(z)\cdot\BB\bigl(\bt_{\st}\bigr) ,\label{main-rr}
\end{align}
where the sum is over partitions
$\bigl\{\bt^s_{\so},\bt^s_{\st},\bt^s_{\sth}\bigr\}\vdash \bt^s$ with
cardinalities depending on the indices $-n\leq i\leq \ell<k\leq j\leq n$ and given by the equalities \r{part-so}, \r{part-sth} and \r{part-st}.

If the cardinality of the set $\bt^s$ is small and
according to \r{part-so} and \r{part-sth}, one may have $\big|\bt^s_{\so}\big|+\big|\bt^s_{\sth}\big|>|\bt^s|$
for some $i$ and $j$. Then the terms in
the right-hand side of \r{main-rr} for such values of the indices $i$ and $j$ have to be discarded.
\end{thm}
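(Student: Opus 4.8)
Following the proof of Theorem~\ref{rec-pr}, the plan is an induction whose base is the action formula \eqref{Bz6} — the case $(\ell,k)=(-n,n)$, for which the right-hand side of \eqref{main-rr} collapses to the single term $\frac{1}{\mu^n_{-n}(z;\bt)}T_{-n,n}(z)\cdot\BB(\bt)$ — and which then \emph{narrows} the rectangle one step at a time. Thus I induct on the width $k-\ell$, decreasing it by one: assuming \eqref{main-rr} for $\Z^k_\ell$, I deduce it for the narrower operations $\Z^k_{\ell+1}$ and $\Z^{k-1}_\ell$. Since every pair $(\ell,k)$ with $-n\le\ell<k\le n$ is reached from $(-n,n)$ this way, this is enough. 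The key point is that $\Z^k_\ell\cdot\BB(\bt)$ is itself an off-shell Bethe vector $\BB(\bt')$ one of whose colors — say color $m$, with $m=\ell$ if $\ell\ge0$ and $m=-\ell-1$ if $\ell<0$ — carries an extra shifted spectral parameter ($z$ when $\ell\ge0$, $z_m$ when $\ell<0$) that the narrower operation does not adjoin.

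\textbf{The narrowing step.} I act with the zero-mode operator $\sT_{m+1,m}$ on both sides of \eqref{main-rr} written for $\Z^k_\ell$. On the left I expand by the zero-mode action \eqref{Bz8}: the partitions removing precisely the extra shift from color $m$ reproduce $\Z^k_{\ell+1}\cdot\BB(\bt)$ and $\Z^{k-1}_\ell\cdot\BB(\bt)$ up to explicit rational factors, while the partitions removing an ordinary Bethe parameter of color $m$ yield vectors $\Z^k_\ell\cdot\BB(\bt')$ that still involve the wider operation $\Z^k_\ell$ and are therefore already covered by the inductive hypothesis. On the right I push $\sT_{m+1,m}$ past each $T_{i,j}(z)$ using \eqref{Bz0}; in contrast with the $\mathfrak{gl}_n$ situation, \eqref{Bz0} carries reflection terms, so a single commutation produces the monodromy entries needed for both narrowed relations at once, the leftover $T_{i,j}(z)\,\sT_{m+1,m}\cdot\BB(\bt_{\st})$ being re-expanded by \eqref{Bz8}. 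Equating the two expansions, separating the identity according to the monomials in the $\chi_i$ via the homogeneity of Remark~\ref{rem: chi homogeneity} (which disentangles the $\Z^k_{\ell+1}$ from the $\Z^{k-1}_\ell$ contribution and cancels the auxiliary terms), and solving, one obtains \eqref{main-rr} for the narrower operations — provided the rational prefactors assemble correctly, which reduces to recursions for $\mu^k_\ell$, $\psi_\ell$, $\phi_k$ and for the products $\Gml_{0,n}$, $\Gmr_{0,n}$ built from $\Oml_s$, $\Omr_s$, i.e., to elementary identities among $f$, $g$, $h$, $\fgo$, $\gamma$ and $\gamma_s$.

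\textbf{Main difficulty.} The principal obstacle is the combinatorial bookkeeping, noticeably heavier than for $\mathfrak{gl}_n$: the operation $\Z^k_\ell$ has four regimes according to the signs of $\ell$ and $k$, and in the regime $\ell<0\le k$ it adjoins \emph{two} shifted parameters $z,z_s$ to the colors $0,\dots,-\ell-1$, so each narrowing move must correctly decide which shift is peeled off and follow the change of regime as $i+1$ or $j-1$ crosses $0$. One must also verify that the sign factors $\sigma_m$, the factor $(\kappa_k)^{\delta_{k,-\ell}}$ in $\mu^k_\ell$, and the factors $(\sigma_{-i})^{\delta_{\ell\ge0}}(\sigma_j)^{\delta_{k\le0}}$ in \eqref{main-rr} transform coherently under these index shifts; that the distinguished color $0$ node ($\gamma_0=\fgo$, $\bt^{-1}=\varnothing$) is handled correctly at the endpoints of the products $\Gml_{0,n}$, $\Gmr_{0,n}$; and that the terms to be discarded when some $\bt^s$ is too small are consistently accounted for on both sides. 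The main leverage, as in Appendix~\ref{ApB}, is the $\chi$-homogeneity: once the $\chi_i$ are made explicit through \eqref{genser}, the identity \eqref{main-rr} must hold separately for each monomial in the $\chi_i$, which splits the verification into a family of smaller, independently checkable statements and pins down the combinatorial coefficients unambiguously.
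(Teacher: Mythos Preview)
Your overall plan — induct from the base case $\Z^n_{-n}$ via zero-mode actions, separating the resulting identity by $\chi$-homogeneity — is indeed what the paper does. But the specific narrowing step you describe contains a genuine gap: you claim that acting with a \emph{single} zero mode $\sT_{m+1,m}$, with $m$ determined by $\ell$ alone, yields on the left-hand side both $\Z^k_{\ell+1}\cdot\BB(\bt)$ and $\Z^{k-1}_\ell\cdot\BB(\bt)$. This is false in general. The action \eqref{Bz8} removes one parameter from color $m$; the extra parameter distinguishing $\Z^k_\ell$ from $\Z^k_{\ell+1}$ lives at color $m$ tied to $|\ell|$, whereas the one distinguishing $\Z^k_\ell$ from $\Z^{k-1}_\ell$ lives at a color tied to $|k|$. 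These coincide only when $-\ell-1=k-1$, which is precisely the base case $(\ell,k)=(-n,n)$. For a generic $(\ell,k)$, removing a parameter from color $m=-\ell-1$ (or $m=\ell$) produces $\Z^k_{\ell+1}$ and vectors of the form $\Z^k_\ell\cdot\BB(\bt')$, but never $\Z^{k-1}_\ell$; no amount of $\chi$-separation on the right-hand side can manufacture it on the left.

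The paper therefore does \emph{not} run a width induction. It narrows one index at a time, using a zero mode indexed by the moving endpoint: first $\Z^n_{-n}\to\Z^n_\ell$ (zero mode at color $-\ell-1$) and, separately, $\Z^n_{-n}\to\Z^k_{-n}$ (zero mode at color $k$); only afterwards does it move the remaining index, again with the zero mode chosen by that index. Moreover, the $(\ell,k)$-plane is split into three subdomains ($1\le-\ell,k\le n$; $0\le\ell<k\le n$; $-n\le\ell<k\le0$) because the steps crossing $\ell=0$ or $k=0$ change the form of $\psi_\ell$, $\phi_k$ and require the special identities \eqref{ApB0}--\eqref{ApB4} rather than just \eqref{simp} and \eqref{ApB1}. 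The $\chi$-homogeneity is used, but its role at a generic step is the same as in Appendix~\ref{ApA}: one $\chi$-coefficient cancels, the other gives the next recurrence — it does not disentangle two simultaneous narrowings.
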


Let us stress that, in opposition to the $\mathfrak{gl}_n$ case, for
$\mathfrak{o}_{2n+1}$-invariant models, depending on the color $s$,
the subsets $\bt^s_{\so}$ and $\bt^s_{\sth}$ satisfy $\big|\bt^s_{\so}\big|+\big|\bt^s_{\sth}\big| \le 2$.
Theorem~\ref{main-th} can be proved using an inductive
approach similar to the one used to prove Theorem~\ref{rec-pr} in Appendix~\ref{ApA}. Sketch of the proof is given in Appendix~\ref{ApB}.
This result is new, except for two partial cases presented in~\cite{LP2} when $\ell=n-1$.

\subsection{Special cases of recurrence relations}\label{sect:ex-rec}
We provide subcases of the recurrence relations
\r{main-rr} that are relevant for the study of integrable models.

{\bf Elementary recurrence relations.}
For explicit calculations, the most interesting (and simplest) cases of the recurrence
relations \r{main-rr} are the $2n$ cases when $k=\ell+1$ and $\ell=-n,\dots,n-1$.
These $2n$ cases are gathered in four classes of expressions, depending on the value of the index $\ell$.
\begin{itemize}\itemsep=0pt
\item When $0<\ell<n$, the recurrence relation reads
\begin{align*}
\begin{split}
\Z^{\ell+1}_{\ell}\cdot\BB\bigl(\bt\bigr)={}&
\BB\bigl(\bt^0,\dots,\bt^{\ell-1},\bigl\{\bt^\ell,z\bigr\},\bt^{\ell+1},\dots,\bt^{n-1}\bigr)\\
={}&\sum_{i=-n}^{\ell}\sum_{j=\ell+1}^n\sum_{\rm part}
\frac{g\bigl(z,\bt^{\ell-1}_{\so}\bigr) h\bigl(\bt^{\ell}_{\so},z\bigr) g\bigl(\bt^{\ell+1}_{\sth},z\bigr)}
{\lambda_{\ell+1}(z) g\bigl(z,\bt^{\ell-1}\bigr) h\bigl(\bt^{\ell},z\bigr)
h\bigl(z,\bt^{\ell}\bigr) g\bigl(\bt^{\ell+1},z\bigr)}\\
&\times \Gmr_{[i],\ell_i}\bigl(\bt_{\so},\bt_{{\st}}\bigr)
\Gml_{\ell+1,j}\bigl(\bt_{{\so},{\st}},\bt_{{\sth}}\bigr) \sigma_{i+1}
T_{i,j}(z)\cdot \BB\bigl(\bt_{\st}\bigr) ,
\end{split}
\end{align*}
where $\ell_i={\rm max}(\ell,|i|)$, $[i]=\frac12(i+|i|)$, and the cardinalities of the subsets
\[
\big|\bt^s_{\so}\big|=\begin{cases}
\Theta(s-i)+\Theta(-i-s-1), &s<\ell,\\
\Theta(-i-s-1), &s\geq \ell,
\end{cases}\qquad
\big|\bt^s_{\sth}\big|=\begin{cases}
0, &s<\ell+1,\\
\Theta(j-s-1), &s\geq \ell+1.
\end{cases}
\]
Remark that for $\ell=n-1$ this recurrence relation reduces to
\begin{align*}
\BB\bigl(\bigl\{\bt^s\bigr\}_{0}^{n-2},\bigl\{\bt^{n-1},z\bigr\}\bigr)={}&\frac{1}{\lambda_n(z) h\bigl(z,\bt^{n-1}\bigr)}\sum_{i=-n}^{n-1}\sum_{\rm part}
\frac{\sigma_{i+1} T_{i,n}(z)\cdot
\BB\bigl(\bt_{\st}\bigr)}{g\bigl(z,\bt^{n-2}_{\st}\bigr) h\bigl(\bt^{n-1}_{\st},z\bigr)}\\
&\times
\prod_{s=0}^{n-1}\gamma_s\bigl(\bt^s_{\so},\bt^s_{\st}\bigr)
\prod_{s=1}^{n-1}\frac{h\bigl(\bt^s_{\st},\bt^{s-1}_{\so}\bigr)}{g\bigl(\bt^s_{\so},\bt^{s-1}_{\st}\bigr)},
\end{align*}
which coincides with the
recurrence relation (3.16) from the paper \cite{LP2}.
\item The cases of $-n\leq \ell<-1$ corresponds to so called {\it shifted}
 recurrence
relation for the off-shell Bethe vectors.
To describe these cases, we set
$\ell=-l-1$ with $0<l<n$. They correspond to an extension of the set of Bethe
parameters $\bt^l$ by the shifted parameter~${z_l=z-c(l-1/2)}$
\begin{align}
\Z^{-l}_{-l-1}\cdot\BB\bigl(\bt\bigr)={}&
\BB\bigl(\bt^0,\dots,\bt^{l-1},\bigl\{\bt^{l},z_{l}\bigr\},\bt^{l+1},\dots,\bt^{n-1}\bigr)\nonumber\\
={}&\sum_{i=-n}^{-l-1}\sum_{j=-l}^n\sum_{\rm part}
\frac{(-1)^{\delta_{j\geq l+1}}
g\bigl(\bt^{l+1}_{\so},z_{l+1}\bigr) h\bigl(z_l,\bt^{l}_{\sth}\bigr) g\bigl(z_{l-1},\bt^{l-1}_{\sth}\bigr)}
{\lambda_{-l}(z) g\bigl(\bt^{l+1},z_{l+1}\bigr) h\bigl(\bt^{l},z_{l}\bigr) h\bigl(z_l,\bt^{l}\bigr)
g\bigl(z_{l-1},\bt^{l-1}\bigr)}\nonumber\\
&\times \Gmr_{l+1,-i}\bigl(\bt_{\so},\bt_{{\st},{\sth}}\bigr)
\Gml_{[-j],l_j}\bigl(\bt_{\st},\bt_{{\sth}}\bigr)\sigma_{j}
T_{i,j}(z)\cdot \BB\bigl(\bt_{\st}\bigr) ,\label{ex2}
\end{align}
where $l_j={\rm max}(l,|j|)$, $[j]=\frac12(j+|j|)$ as above, and the
 cardinalities of the subsets in the sum over partitions are
\begin{gather}
\big|\bt^s_{\so}\big|=\begin{cases}
0, &s<l+1,\\
\Theta(-i-s-1), &s\geq l+1,
\end{cases}\nonumber\\
\big|\bt^s_{\sth}\big|=\begin{cases}
\Theta(s+j)+\Theta(j-s-1), &s<l,\\
\Theta(j-s-1), &s\geq l.
\end{cases}\label{ex2a}
\end{gather}
The sign factor $(-1)^{\delta_{j\geq l+1}}$ in the second line of
\r{ex2} is due to the identity
 $g\bigl(\bt^l_{\sth},z_{l-1}\bigr)^{-1}=-h\bigl(z_{l},\bt^l_{\sth}\bigr)$, valid when the cardinality of the subset
$\bt^l_{\sth}$ is equal to one. According to~\r{ex2a}, this happens
when $j\geq l+1$.

For $l=n-1$, the recurrence relation \r{ex2} can be rewritten in the form
\begin{gather*}
 \BB\bigl(\bigl\{\bt^s\bigr\}_{0}^{n-2},\bigl\{\bt^{n-1},z_{n-1}\bigr\}\bigr)
 \\
\qquad = \frac{1}{\lambda_{-n+1}(z) h\bigl(\bt^{n-1},z_{n-1}\bigr)} \sum_{j=-n+1}^n\sum_{\rm part}
 \frac{(-1)^{\delta_{j,n}}}{h\bigl(z_{n-1},\bt^{n-1}_{\st}\bigr)}
 \frac{\sigma_{j} T_{-n,j}(z)\cdot\BB\bigl(\bt_{\st}\bigr)}{g\bigl(z_{n-2},\bt^{n-2}_{\st}\bigr)}\\
 \qquad\phantom{=}{}\times
 \prod_{s=0}^{n-1}\alpha_s\bigl(\bt^s_{\sth}\bigr)\gamma_s\bigl(\bt^s_{\st},\bt^s_{\sth}\bigr)
 \prod_{s=1}^{n-1}\frac{h\bigl(\bt^s_{\sth},\bt^{s-1}_{\st}\bigr)}{g\bigl(\bt^s_{\st},\bt^{s-1}_{\sth}\bigr)}
 \end{gather*}
and
coincides exactly with the
recurrence relation (3.19) from the paper \cite{LP2}. The sign factor $(-1)^{\delta_{j\geq l+1}}$
turns in this case to the sign factor $(-1)^{\delta_{j,n}}$.

\item When $\ell=0$, the recurrence relation takes the form
\begin{align}
\Z^{1}_{0}\cdot\BB\bigl(\bt\bigr)={}&\BB\bigl(\bigl\{\bt^0,z\bigr\},\bt^{1},\dots,\bt^{n-1}\bigr)\nonumber\\
={}&\sum_{i=-n}^{0}\sum_{j=1}^n\sum_{\rm part}
\frac{g\bigl(z_0,\bt^{0}_{\so}\bigr) g\bigl(\bt^{1}_{\sth},z\bigr) \sigma_{i+1}
T_{i,j}(z)\cdot \BB\bigl(\bt_{\st}\bigr)}
{\lambda_{1}(z) g\bigl(z_0,\bt^{0}\bigr) h\bigl(z,\bt^{0}\bigr)
g\bigl(\bt^{1},z\bigr)}\nonumber\\
&\times
\prod_{s=0}^{-i-1}\gamma_s\bigl(\bt^s_{\so},\bt^s_{\st}\bigr)
\frac{h\bigl(\bt^{s+1}_{\st},\bt^{s}_{\so}\bigr)}{g\bigl(\bt^s_{\so},\bt^{s-1}_{\st}\bigr)}
 \prod_{s=1}^{j-1}\alpha_s\bigl(\bt^s_{\sth}\bigr)\gamma_s\bigl(\bt^s_{{\so},{\st}},\bt^s_{\sth}\bigr)
 \frac{h\bigl(\bt^s_{\sth},\bt^{s-1}_{{\so},{\st}}\bigr)}
 {g\bigl(\bt^{s+1}_{{\so},{\st}},\bt^{s}_{\sth}\bigr)} ,\label{ex3}
\end{align}
where cardinalities of the subsets in the sum over partitions are
\[
\big|\bt^s_{\so}\big|=\Theta(-i-s-1)\quad \forall s,
\qquad
\big|\bt^s_{\sth}\big|=\begin{cases}
0, &s=0,\\
\Theta(j-s-1), &s\geq 1.
\end{cases}
\]
\item When $\ell=-1$, one gets the recurrence
relation
\begin{align}
\Z^{0}_{-1}\cdot\BB\bigl(\bt\bigr)={}&\BB\bigl(\bigl\{\bt^0,z_0\bigr\},\bt^{1},\dots,\bt^{n-1}\bigr)\nonumber\\
={}&\sum_{i=-n}^{-1}\sum_{j=0}^n\sum_{\rm part}
\frac{g\bigl(\bt^{1}_{\so},z_1\bigr) g\bigl(z,\bt^{0}_{\sth}\bigr) \sigma_j T_{i,j}(z)\cdot \BB\bigl(\bt_{\st}\bigr)}
{\lambda_{0}(z) g\bigl(\bt^{1},z_1\bigr) g\bigl(z_1,\bt^{0}\bigr)^{-1}
g\bigl(z,\bt^{0}\bigr)}\nonumber\\
&\times
\prod_{s=1}^{-i-1}\gamma_s\bigl(\bt^s_{\so},\bt^s_{{\st},{\sth}}\bigr)
\frac{h\bigl(\bt^{s+1}_{{\st},{\sth}},\bt^{s}_{\so}\bigr)}
{g\bigl(\bt^s_{\so},\bt^{s-1}_{{\st},{\sth}}\bigr)}
 \prod_{s=0}^{j-1}\alpha_s\bigl(\bt^s_{\sth}\bigr)\gamma_s\bigl(\bt^s_{{\st}},\bt^s_{\sth}\bigr)
 \frac{h\bigl(\bt^s_{\sth},\bt^{s-1}_{{\st}}\bigr)}{g\bigl(\bt^{s+1}_{{\st}},\bt^{s}_{\sth}\bigr)} ,\label{ex4}
\end{align}
where cardinalities of the subsets in the sum over partitions are
\[
\big|\bt^s_{\so}\big|=\begin{cases}
0,&s=0,\\
\Theta(-i-s-1), &s\geq 1,
\end{cases}
\qquad
\big|\bt^s_{\sth}\big|=
\Theta(j-s-1)\quad \forall s\geq 1.
\]
\end{itemize}

{\bf Case of $\boldsymbol{\mathfrak{o}_{3}}$-invariant models.}
The above recurrence relations were calculated for the algebras $\mathfrak{o}_{2n+1}$
with $n>1$. However, one can verify that they are also valid for
the orthogonal algebra $\mathfrak{o}_3$ when $n=1$.
This case was investigated in details in the paper \cite{LPRS19}.
In this case, there are only three cases of rectangular recurrence relations
for the Bethe vectors:
$\Z^{1}_{-1}\cdot\BB\bigl(\bt^0\bigr)$, $\Z^{1}_{0}\cdot\BB\bigl(\bt^0\bigr)$ and
$\Z^{0}_{-1}\cdot\BB\bigl(\bt^0\bigr)$, that we detail below.

For the Bethe vector $\Z^{1}_{-1}\cdot\BB\bigl(\bt^0\bigr)$, there is no summation over partitions
 and according to~\r{Bz1} and \r{Bz3}
it is equal to
\begin{equation}\label{B1-1}
\Z^{1}_{-1}\cdot\BB\bigl(\bt^0\bigr)=
\BB\bigl(\bigl\{\bt^0,z,z_0\bigr\}\bigr)= -\frac{2}{\lambda_1(z)} T_{-1,1}(z)\cdot \BB\bigl(\bt^0\bigr) .
\end{equation}
For the Bethe vector $\Z^{1}_{0}\cdot\BB\bigl(\bt^0\bigr)$, the recurrence
relation \r{ex3} becomes
\begin{align}
\Z^{1}_{0}\cdot\BB\bigl(\bt^0\bigr)&=\BB\bigl(\bigl\{\bt^0,z\bigr\}\bigr)\nonumber\\
 &=\frac{1}{\lambda_1(z)\fgo\bigl(z_0,\bt^0\bigr)}\!
\sk{T_{0,1}(z)\cdot\BB\bigl(\bt^0\bigr)-\!\sum_{\rm part}
g\bigl(z_0,\bt^0_{\so}\bigr)\fgo\bigl(\bt^0_{\so},\bt^0_{\st}\bigr) T_{-1,1}(z)\cdot\BB\bigl(\bt^0_{\st}\bigr)} ,\label{B1-2}
\end{align}
while for the Bethe vector $\Z^{0}_{-1}\cdot\BB\bigl(\bt^0\bigr)$
the recurrence relation \r{ex4} becomes
\begin{align}
 \Z^{0}_{-1}\cdot\BB\bigl(\bt^0\bigr)={}&\BB\bigl(\bigl\{\bt^0,z_0\bigr\}\bigr)=\frac{1}{\lambda_0(z)\fgo\bigl(\bt^0,z\bigr)}\nonumber\\
 &\times\!
\sk{-T_{-1,0}(z)\cdot\BB\bigl(\bt^0\bigr)+\sum_{\rm part}
g\bigl(z,\bt^0_{\sth}\bigr) \alpha_0\bigl(\bt^0_{\sth}\bigr) \fgo\bigl(\bt^0_{\st},\bt^0_{\sth}\bigr)
T_{-1,1}(z)\cdot\BB\bigl(\bt^0_{\st}\bigr)\!} .\label{B1-3}
\end{align}
In \r{B1-2} and \r{B1-3}, we have $\big|\bt^0_{\so}\big|=|\bt^0_{\sth}|=1$.
Formulas \r{B1-1}, \r{B1-2} and \r{B1-3} coincide with
\cite[formulas (4.15), (4,19) and (4.20)]{LPRS19}. This proves that
Theorem~\ref{main-th} is also valid for $\mathfrak{o}_3$-invariant integrable models.

\subsection[Reduction to gl\_n]{Reduction to $\boldsymbol{\mathfrak{gl}_n}$}\label{sect:red-gln}
Still in the framework of the $\mathfrak{o}_{2n+1}$-type monodromy matrix,
we consider the particular case of Bethe vectors with $\bt^0=\varnothing$. In that case,
since the color 0 is absent, one should recover the construction done for the
$\mathfrak{gl}_{n}$-type monodromy matrix. In this subsection, we prove it by showing that the recurrence relations
\r{main-rr}, coming from the $\mathfrak{o}_{2n+1}$ set-up, provide the recurrence relations \eqref{Az19},
obtained in the
$\mathfrak{gl}_{n}$ case. Obviously, to stay within the $\mathfrak{gl}_{n}$ part, we need to keep
$\bt^0=\varnothing$. Thus, we only consider the recurrence relations
\r{main-rr} with either $-n\leq \ell< k<0$ or $0<\ell< k\leq n$.

{\bf The case $\boldsymbol{0<\ell<k\leq n}$ and $\boldsymbol{\bt^0=\varnothing}$.} According to the first line in
\r{part-so}, it signifies that~${\big|\bt^0_{\so}\big|=0=\Theta(-i)+\Theta(-i-1)}$
which is possible only for $i>0$. Since for these values of the index $i$
the step function
$\Theta(-i-s-1)$ vanishes for any $s$, the cardinalities of the partitions
\r{part-so} and \r{part-sth} become the cardinalities \r{A-car}. Moreover, for $i>0$,
 the sign $\delta_{-\ell-k}$ compensate the sign factor
$\sigma_{-i}=-1$ and $[i]=i$ and $\ell_i=\ell$, so that
the function $\Xi^{\ell,k}_{i,j}\bigl(z;\bt_{\so},\bt_{\st},\bt_{\sth}\bigr)$ in \r{X-fun}
becomes the function
\r{A-X-f}.
The recurrence relation \r{main-rr} takes the form
\begin{gather}
\BB\bigl(\varnothing,\bigl\{\bt^s\bigr\}_1^{\ell-1},\bigl\{\bt^{s},z\bigr\}_{\ell}^{k-1},\bigl\{\bt^s\bigr\}_k^{n-1}\bigr)\nonumber\\
\qquad=\sum_{i=1}^{\ell}\sum_{j=k}^n \sum_{\rm part}
\frac{\TT_{i,j}(z)\cdot\BB\bigl(\varnothing,\bigl\{\bt^s\bigr\}_{1}^{i-1},\bigl\{\bt^s_{\st}\bigr\}_{i}^{\ell-1},
\bigl\{\bt^s\bigr\}_{\ell}^{k-1},\bigl\{\bt^s_{\st}\bigr\}_{k}^{j-1},\bigl\{\bt^s\bigr\}_{j}^{n-1}\bigr)}
{\lambda_k(z) g\bigl(z,\bt^{\ell-1}_{\st}\bigr) h\bigl(\bt^{\ell},z\bigr) h\bigl(z,\bt^{k-1}\bigr)
g\bigl(\bt^k_{\st},z\bigr)}\nonumber\\
\phantom{\qquad=}{}\times
\prod_{s=i}^{\ell-1}\Omr\bigl(\bt^s_{\so},\bt^s_{\st}|\bt^{s-1}_{\st},\bt^{s+1}_{\st}\bigr)
\prod_{s=k}^{j-1}\alpha_s\bigl(\bt^s_{\so}\bigr)
\Oml\bigl(\bt^s_{\st},\bt^s_{\so}|\bt^{s-1}_{\st},\bt^{s+1}_{\st}\bigr) ,\label{gl-red1}
\end{gather}
where the cardinalities of the subsets in the sum over partitions
are given by \r{A-car} and $\TT_{i,j}(z):= T_{i,j}(z)$ for
$1\leq i,j\leq n$ as it was described for the first type of
embedding of $Y(\mathfrak{gl}_n)$ in $Y(\mathfrak{o}_{2n+1})$, see~\r{Tgl}. So we recover the
$\mathfrak{gl}_n$-type recurrence relations \r{Az19}.

{
\bf The case $\boldsymbol{-n\leq \ell<k<0}$ and $\boldsymbol{\bt^0=\varnothing}$.} This case is more subtle
because it is related to the off-shell Bethe vector
$\hBB\bigl(\bt\bigr)$, connected to $\BB\bigl(\bt\bigr)$
in the sense of the paper \cite{LPRS-New}.
If
the off-shell Bethe vector $\BB\bigl(\bt\bigr)$ is built from the entries
$\TT_{i,j}(z)=T_{i,j}(z)$ for $1\leq i<j\leq n$ of $\mathfrak{gl}_n$-type
according to the embedding \r{Tgl},
the off-shell Bethe vector \small{$\hBB\bigl(\bt\bigr)$} is built from the
$\mathfrak{gl}_n$-type matrix entries~$\hat{\TT}_{i,j}(z)$ described
by the embedding \r{em-sh}.

Let us consider the recurrence
relations \r{main-rr} at $-n\leq \ell<k<0$ and empty set $\bt^0=\varnothing$ in
more details.
If $\bt^0=\varnothing$, then according to the
first line in \r{part-sth} $\big|\bt^0_{\sth}\big|=0=\Theta(j)+\Theta(j-1)$ which is possible
only for strictly negative values of the index $j$. So the recurrence
relation \r{main-rr} becomes
\begin{align}
\Z^k_\ell\cdot\BB\bigl(\bt\bigr)\big|_{\bt^0=\varnothing}&=
\BB\bigl(\varnothing,\bigl\{\bt^s\bigr\}_1^{-k-1},\bigl\{\bt^s,z_s\bigr\}_{-k}^{-\ell-1},\bigl\{\bt^s\bigr\}_{-\ell}^{n-1}\bigr)\nonumber\\
&= - \mu^k_\ell\bigl(z;\bt\bigr)^{-1} \sum_{i=-n}^\ell
\sum_{j=k}^{-1}\sum_{\rm part}
\Xi^{\ell,k}_{i,j}\bigl(z;\bt_{\so},\bt_{\st},\bt_{\sth}\bigr)
T_{i,j}(z)\cdot\BB\bigl(\varnothing,\bigl\{\bt^s_{\st}\bigr\}_1^{n-1}\bigr) ,\label{gl1}
\end{align}
where $\mu^k_\ell\bigl(z;\bt\bigr)=\lambda_k(z) \psi_\ell\bigl(z;\bt\bigr) \phi_k\bigl(z;\bt\bigr)$ and
\begin{align}
\Xi^{\ell,k}_{i,j}\bigl(z;\bt_{\so},\bt_{\st},\bt_{\sth}\bigr)={}&
\psi_\ell\bigl(z;\bt_{\so}\bigr)\ \phi_k\bigl(z;\bt_{\sth}\bigr)\nonumber\\
&\times
\prod_{s=-\ell}^{-i-1}\Omr\bigl(\bt^s_{\so},\bt^s_{{\st}}|\bt^{s-1}_{\st},\bt^{s+1}_{\st}\bigr)
\prod_{s=-j}^{-k-1}\alpha_s\bigl(\bt^s_{\sth}\bigr)
\Oml\bigl(\bt_{\st},\bt_{{\sth}}|\bt^{s-1}_{\sth},\bt^{s+1}_{\st}\bigr)\label{X-red}
\end{align}
since $\bt^s_{\sth}=\varnothing$ for $-\ell\leq s\leq -i-1$.
 In \r{X-red}, we dropped the index $s$ of the functions
$\Oml_s$ and $\Omr_s$ because for positive $s$ these functions
does not depend on $s$ and coincide with the functions \r{Az7}.

The functions
$\psi_\ell\bigl(z;\bt\bigr)$ and $\phi_k\bigl(z;\bt\bigr)$ are
\[
\psi_\ell\bigl(z;\bt\bigr)=\frac{g\bigl(\bt^{-\ell},z_{-\ell}\bigr)}{g\bigl(z_{-\ell},\bt^{-\ell-1}\bigr)},\qquad
\phi_k\bigl(z;\bt\bigr)=\frac{g\bigl(z_{-k-1},\bt^{-k-1}\bigr)}{g\bigl(\bt^{-k},z_{-k-1}\bigr)}
\]
and the cardinalities of the partitions in \r{gl1} are
\begin{equation}\label{gl3}
\big|\bt^s_{\so}\big|=\begin{cases}
0, &s<-\ell,\\
\Theta(-i-s-1), &s\geq -\ell,
\end{cases}\qquad
\big|\bt^s_{\sth}\big|=\begin{cases}
\Theta(j+s), &s<-k,\\
0, &s\geq -k.
\end{cases}
\end{equation}
Note again that there are no summations in the right-hand side of \r{gl1}
in case when $\ell=-n$ and $k=-1$.

Let us perform the following transformation of the recurrence relation
\r{gl1}. First we replace the strictly negative indices $-n\leq \ell<k<0$
by the strictly positive indices $0<\lp<\kp\leq n$ defined as~${
\lp=n+1+\ell}$, $ 1\leq\lp\leq n-1$, $ \kp=n+1+k$, $ 2\leq\kp\leq n$.
Analogously, we replace the summation over strictly negative
indices $-n\leq i\leq \ell$ and $k\leq j\leq -1$ in~\r{gl1} by the summation
over strictly positive indices $\ip$ and $\jp$ such that
$
\ip=i+n+1$, $ 1\leq \ip\leq \lp$, $ \jp=j+n+1$, $ \kp\leq\jp\leq n $.
Moreover, instead of the sets of Bethe parameters $\bt^s$ we consider
the shifted sets of Bethe parameters
\begin{equation}\label{gl10}
\btt^s=\bt^{ n-s}-c s=
\bigl\{t^{ n-s}_1-c s,\dots,t^{ n-s}_{r_{n-s}}-c s\bigr\},\qquad s=1,\dots,n-1 .
\end{equation}
These sets will be partitioned in the sum of the partitions
$\{\btt^s_{\so},\btt^s_{\st},\btt^s_{\sth}\}\vdash \btt^s$ with cardinalities
according to \r{gl3} equal to
\[
|\btt^s_{\so}|=\begin{cases}
\Theta(s-\ip), &s<\lp,\\
0, &s\geq \lp,
\end{cases}\qquad
|\btt^s_{\sth}|=\begin{cases}
0, &s<\kp,\\
\Theta(\jp-s-1), &s\geq \kp.
\end{cases}
\]

For $2\leq\kp\leq n$, we introduce the functions $\hlam_\kp(u)$
and $\halp_{\kp}(u)$ by the equations
\begin{gather*}
\hlam_\kp(u)=\frac{1}{\lambda_{n+1-\kp}(u+c(\kp-1))}
\prod_{s=1}^{\kp-1}\frac{\lambda_{n+1-s}(u+cs)}{\lambda_{n+1-s}(u+c(s-1))},\\
\halp_{\kp}(u)=\frac{\hlam_\kp(u)}{\hlam_{\kp+1}(u)}=
\frac{\lambda_{n-\kp}(u+c\kp)}{\lambda_{n+1-\kp}(u+c\kp)}
 ={\alpha_{n-\kp}(u+c\kp)} .
\end{gather*}
Then according to the relation \r{lam} which connects the values of the functions
$\lambda_k(u)$ for negative and positive values of the index $k$
and definition \r{gl10} one gets
\begin{equation}\label{gl13}
\lambda_k(z)=\hlam_\kp(z_n)\qquad \mbox{and}\qquad
\halp_s(\btt^s)=\alpha_{n-s}\bigl(\bt^{n-s}\bigr) .
\end{equation}
Using all these definitions, one can calculate
\begin{gather*}
\frac{1}{\psi_\ell\bigl(z;\bt_{\st}\bigr)}
\prod_{s=-\ell}^{-i-1}\Omr\bigl(\bt^s_{\so},\bt^s_{{\st}}|\bt^{s-1}_{\st},\bt^{s+1}_{\st}\bigr)
=\frac{(-1)^{|\btt^{\ip-1}|-|\btt^{\ip}_{\st}|}}
{g\bigl(z_n,\btt^{\lp-1}_{\st}\bigr) h\bigl(\btt^{\lp},z_n\bigr)}
\prod_{s=\ip}^{\lp-1}\Omr\bigl(\btt^s_{\so},\btt^s_{{\st}}|\btt^{s-1}_{\st},\btt^{s+1}_{\st}\bigr) ,
\\
\frac{1}{\phi_k\bigl(z;\bt_{\st}\bigr)}
\prod_{s=-j}^{-k-1}\alpha_s\bigl(\bt^s_{\sth}\bigr)
\Oml\bigl(\bt^s_{\so},\bt^s_{{\st}}|\bt^{s-1}_{\st},\bt^{s+1}_{\st}\bigr)
\\
\qquad= \frac{(-1)^{|\btt^{\jp-1}_{\st}|-|\btt^{\jp}|}}
{h\bigl(z_n,\btt^{\kp-1}\bigr) g\bigl(\btt^{\kp}_{\st},z_n\bigr)}
\prod_{s=\kp}^{\jp-1}\halp_s\bigl(\btt^s_{\sth}\bigr)
\Oml\bigl(\btt^s_{\st},\btt^s_{{\sth}}|\btt^{s-1}_{\st},\btt^{s+1}_{\st}\bigr),
\end{gather*}
and the recurrence relations \r{gl1}
for the indices $\ell$ and $k$ such that
$-n\leq \ell<k<0$ becomes
\begin{gather}
\BB'\bigl(\varnothing,\bigl\{\bt^s\bigr\}_1^{-k-1},\bigl\{\bt^s,z_s\bigr\}_{-k}^{-\ell-1},\bigl\{\bt^s\bigr\}_{-\ell}^{n-1}\bigr)\nonumber\\
\qquad= \sum_{\ip=1}^\lp\sum_{\jp=\kp}^n \sum_{\rm part}
\frac{T_{i,j}(z)
\cdot\BB'\bigl(\varnothing,\bigl\{\bt^s\bigr\}_{1}^{-j-1},
\bigl\{\bt^s_{\st}\bigr\}_{-j}^{-k-1},
\bigl\{\bt^s\bigr\}_{-k}^{-\ell-1},\bigl\{\bt^s_{\st}\bigr\}_{-\ell}^{-i-1},\bigl\{\bt^s\bigr\}_{-i}^{n-1}\bigr)}
{\hlam_\kp(z_n) g\bigl(z_n,\btt^{\lp-1}_{\st}\bigr) h\bigl(\btt^\lp,z_n\bigr)
 h\bigl(z_n,\btt^{\kp-1}\bigr) g\bigl(\btt^\kp_{\st},z_n\bigr)}\nonumber\\
\phantom{\qquad=}{}\times
\prod_{s=\ip}^{\lp-1}\Omr\bigl(\btt^s_{\so},\btt^s_{\st}|\btt^{s-1}_{\st},\btt^{s+1}_{\st}\bigr)
\prod_{s=\kp}^{\jp-1}\halp_s\bigl(\btt^s_{\sth}\bigr)\
\Oml\bigl(\btt^s_{\st},\btt^s_{\sth}|\btt^{s-1}_{\st},\btt^{s+1}_{\st}\bigr),\label{gl-recpp}
\end{gather}
where
\smash{$
 \BB'\bigl(\bt\bigr) = (-1)^{\sum_{s=1}^{n-2} |\bt^{s}| |\bt^{s+1}| + \sum_{s=1}^{n-1} |\bt^{s}|} \BB\bigl(\bt\bigr)$}.

Let us define the monodromy matrix entries
$\hat{\TT}_{\ip,\jp}(z)$ for all $\ip,\jp=1,\dots,n$ defined by embedding~\r{em-sh} together with a shift of the spectral parameter as
\begin{equation}\label{TT mon}
\hat{\TT}_{\ip,\jp}(z_n)= T_{i,j}(z) \qquad\text{with}\quad \begin{cases} i=\ip-n-1 , \\ j=\jp-n-1 . \end{cases}
\end{equation}
The corresponding eigenvalues of $\hat{\TT}_{k'k'}(z_n)$ on the vacuum vector $\rvac$ coincide with $\hat{\lambda}_{k'}(z_n)$ from \r{gl13}.

Note that the coefficients in the recursion relation \r{gl-recpp} have exactly the same form as \r{gl-red1}, if one replaces $z_n\to z$, $\btt^s\to\bt^s$, $\hlam_{k'}(z_n)\to
\lambda_{k'}(z)$, $\hat{\TT}_{i',j'}(z_n)\to \TT_{i',j'}(z)$.

Thus, the Bethe vector $\BB\bigl(\bt\bigr)$ relates to Bethe vectors $\hBB\bigl(\bt\bigr)$ constructed from the monodromy matrix~\smash{$\hat{\TT}(z)$} \r{TT mon} by relation
\[
\hBB\bigl(\varnothing,\bigl\{\bt^{ n-s}-cs-c\kappa_n\bigr\}_1^{n-1}\bigr)=
(-1)^{\sum_{s=1}^{n-2} |\bt^{s}| |\bt^{s+1}| + \sum_{s=1}^{n-1} |\bt^{s}|} \BB\bigl(\varnothing,\bigl\{\bt^{s}\bigr\}_1^{n-1}\bigr) .
\]

Such relation was studied in the paper \cite{LPRS-New}, where it was shown that the vector Bethe $\hBB\bigl(\bt\bigr)$ is \mbox{constructed} from an inverted and transposed $\mathfrak{gl}_n$-type monodromy
matrix, i.e., \smash{$\hat{\TT}(z_n) = \bigl(\TT(z)^{-1}\bigr)^{\rm t}$}.

\subsection[Embedding Y(o\_2a+1)otimes
 Y(gl\_n-a)-> Y(o\_2n+1)]{Embedding $ \boldsymbol{Y(\mathfrak{o}_{2a+1})\otimes
 Y(\mathfrak{gl}_{n-a})\hookrightarrow Y(\mathfrak{o}_{2n+1})}$
}\label{sect:red-o.x.gl}

In this subsection, we study the recurrence relations \r{main-rr} in the case
when the set $\bt^a=\varnothing$ for some $a>0$.
\begin{prop}\label{emb-pr}
The off-shell Bethe vectors
\smash{$\BBo\bigl(\bigl\{\bt^s\bigr\}_0^{a-1},\varnothing,\bigl\{\bt^s\bigr\}_{a+1}^{n-1}\bigr)$} factorizes
into the product of $\mathfrak{o}_{2a+1}$-type pre-Bethe vector
\smash{$\cBo\bigl(\bigl\{\bt^s\bigr\}_0^{a-1}\bigr)$} and $\mathfrak{gl}_{n-a}$-type
pre-Bethe vectors $\cBg\bigl(\bigl\{\bt^s\bigr\}_{a+1}^{n-1}\bigr)$
\begin{equation}\label{em1}
\BBo\bigl(\bigl\{\bt^s\bigr\}_0^{a-1},\varnothing,\bigl\{\bt^s\bigr\}_{a+1}^{n-1}\bigr)=\cBo\bigl(\bigl\{\bt^s\bigr\}_0^{a-1}\bigr)
\cdot \cBg\bigl(\bigl\{\bt^s\bigr\}_{a+1}^{n-1}\bigr) |0\rangle .
\end{equation}
\end{prop}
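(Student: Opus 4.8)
The plan is to follow the argument used for Proposition~\ref{emb-gl}: we apply the rectangular recurrence relations \eqref{main-rr} only for pairs $(\ell,k)$ that leave the distinguished set $\bt^a$ empty, and we proceed by induction on the total cardinality $\sum_{s\neq a}|\bt^s|$, the base case being $\BBo(\varnothing)=\rvec$ together with $\cBo(\varnothing)=\cBg(\varnothing)=1$. Two families of recurrences are admissible. The \emph{lower} family $-a\le\ell<k\le a$ extends (possibly with a shift $z\mapsto z_s$) only the colors $0,1,\dots,a-1$ and will rebuild the factor $\cBo(\{\bt^s\}_0^{a-1})$; the \emph{upper} family $a+1\le\ell<k\le n$ extends only the colors $a+1,\dots,n-1$ and will rebuild the factor $\cBg(\{\bt^s\}_{a+1}^{n-1})$.

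For the lower family, imposing $\bt^a=\varnothing$ forces $|\bt^a_{\so}|=|\bt^a_{\sth}|=0$ in \eqref{part-so}--\eqref{part-st}; since $|\ell|,|k|\le a$ this in turn annihilates the step functions $\Theta(-i-s-1)$ and $\Theta(j-s-1)$ for every $s\ge a$, so the double sum in \eqref{main-rr} collapses to $-a\le i\le\ell$, $k\le j\le a$, the colors $a,\dots,n-1$ remain untouched, and only entries $T_{i,j}(z)$ with $-a\le i<j\le a$ survive. Inspecting the coefficients, one sees that \eqref{main-rr} restricted to these ranges has exactly the form of the $\mathfrak{o}_{2a+1}$-type recurrence relation: the functions $\psi_\ell,\phi_k$ of \eqref{psi}--\eqref{phi} are $n$-independent here, the factor $(\kappa_k)^{\delta_{k,-\ell}}$ only carries $\kappa_k=k-1/2$, the sign factors $\sigma$ do not depend on $n$, and for $s\ge1$ the functions $\gamma_s,\Oml_s,\Omr_s$ are already the $\mathfrak{gl}$-type ones. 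The zero-mode actions \eqref{Bz8} for $0\le\ell\le a-1$ likewise do not affect the colors $>a$. Hence, applied to the inductive hypothesis $\cBo(\{\bt^s_{\st}\}_0^{a-1})\cdot\cBg(\{\bt^s\}_{a+1}^{n-1})\rvec$, the lower family rebuilds $\cBo(\{\bt^s\}_0^{a-1})$ while $\cBg(\{\bt^s\}_{a+1}^{n-1})$ stays inert.

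For the upper family, $\bt^a=\varnothing$ forces, again through \eqref{part-so}--\eqref{part-st}, the restriction $a+1\le i\le\ell$, $k\le j\le n$; only entries $T_{i,j}(z)$ with $a+1\le i<j\le n$ occur and the colors $0,\dots,a$ stay fixed. Since all relevant indices are strictly positive, the orthogonal-specific ingredients trivialize — $\psi_\ell,\phi_k$ have no color-$0$ part, $\kappa$ drops out, the sign $\sigma_{-i}$ cancels the $\sigma_{-\ell-k}$ sitting inside $\mu^k_\ell$, and $\gamma_s,\Oml_s,\Omr_s$ coincide with $\gamma,\Oml,\Omr$ — so that the identity becomes literally the $\mathfrak{gl}_{n-a}$-type recurrence \eqref{Az19} in the colors $a+1,\dots,n-1$; the verification is the same as in the second part of the proof of Proposition~\ref{emb-gl}, with the $\mathfrak{gl}_n$ monodromy of that proof replaced here by the entries $T_{i,j}(z)$, $a+1\le i,j\le n$, which realize the $Y(\mathfrak{gl}_{n-a})$ factor. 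The zero modes $\sT_{\ell+1,\ell}$, $a+1\le\ell\le n-1$, reduce correspondingly to the $\mathfrak{gl}_{n-a}$ zero modes \eqref{Az6}, so the upper family rebuilds $\cBg(\{\bt^s\}_{a+1}^{n-1})$ without touching $\cBo(\{\bt^s\}_0^{a-1})$.

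Finally, to splice the two reconstructions into the single product \eqref{em1} one must commute the entries $T_{i,j}(z)$, $-a\le i<j\le a$, from the lower family past the factor $\cBg(\{\bt^s\}_{a+1}^{n-1})$, which is permitted by $[\widehat T_{i,j}(z),T_{k+a,l+a}(z)]=0$, see \cite[Theorem~3.7 and Corollary~3.10]{JLM18}. This last point is the main obstacle, because the $\mathfrak{o}_{2a+1}$ subalgebra is generated not by the raw entries $T_{i,j}(z)$ with $|i|,|j|\le a$ but by the quasi-determinants $\widehat T_{i,j}(z)$, whose diagonal part acts on $\rvec$ with eigenvalues differing from the $\lambda_i(z)$ by a factor built from the colors $>a$ (in the same way the functions $\hat\lambda_{k'}$ appear in Section~\ref{sect:red-gln}). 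One therefore has to verify that, on vectors of the form $\cBg(\{\bt^s\}_{a+1}^{n-1})\rvec$, the raw entries $T_{i,j}(z)$, $-a\le i<j\le a$, reproduce the action of the $\widehat T_{i,j}(z)$, so that the lower-family recurrence is indeed the $\mathfrak{o}_{2a+1}$ one (with those modified eigenvalues). Once this is established, the two inductions combine to yield $\BBo(\{\bt^s\}_0^{a-1},\varnothing,\{\bt^s\}_{a+1}^{n-1})=\cBo(\{\bt^s\}_0^{a-1})\cdot\cBg(\{\bt^s\}_{a+1}^{n-1})\rvec$, which is \eqref{em1}.
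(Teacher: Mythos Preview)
Your first three paragraphs track the paper's proof closely: restrict \eqref{main-rr} to pairs $(\ell,k)$ that keep $\bt^a=\varnothing$, observe that the summation ranges and the coefficient functions then collapse to those of the smaller algebras, and induct on cardinalities. The paper additionally treats a third range $-n\le\ell<k\le -a-1$, showing it reproduces the $\mathfrak{gl}_{n-a}$-type recurrence for the companion vector $\hat\BB(\bt)$ of \cite{LPRS-New}; this range is not needed for the induction itself, so its omission is harmless.

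The genuine problem is your final paragraph: you manufacture an obstacle that is not there. No commutation of the lower-family $T_{i,j}(z)$ past $\cBg(\{\bt^s\}_{a+1}^{n-1})$ is needed, and quasi-determinants play no role in the argument. Order the induction: first run the upper family until all colors $a+1,\dots,n-1$ are filled, obtaining $\BBo(\varnothing,\dots,\varnothing,\{\bt^s\}_{a+1}^{n-1})=\cBg(\{\bt^s\}_{a+1}^{n-1})|0\rangle$; then run the lower family. In that second phase, each instance of \eqref{main-rr} with $-a\le\ell<k\le a$ and $\bt^a=\varnothing$ multiplies the current vector \emph{from the left} by a linear combination of $T_{i,j}(z)$, $|i|,|j|\le a$, whose coefficients involve only $z$, the $\lambda$'s, and Bethe parameters of colors $0,\dots,a-1$ (once $\bt^a=\varnothing$, check that $\psi_\ell,\phi_k,\Xi^{\ell,k}_{i,j}$ in \eqref{psi}--\eqref{X-fun} contain no $\bt^s$ with $s\ge a$). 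Iterating gives $\cBo(\{\bt^s\}_0^{a-1})\cdot\cBg(\{\bt^s\}_{a+1}^{n-1})|0\rangle$ directly, with $\cBo$ \emph{defined} as this composite left-multiplication; the phrase ``$\mathfrak{o}_{2a+1}$-type'' in the proposition refers to the fact that this operator is built by a recurrence of the shape \eqref{main-rr} with $n$ replaced by $a$, not to membership in the quasi-determinant subalgebra. So the verification you call ``the main obstacle'' is unnecessary, and the proof closes without it.
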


\begin{proof}
The proof of this proposition is analogous to the proof of Proposition~\ref{emb-gl}.
We consider the recurrence
relations \r{main-rr} for three different ranges of the indices $\ell$ and $k$:
$-a\leq \ell<k\leq a$, $a+1\leq \ell<k\leq n$ and $-n\leq \ell<k\leq -a-1$.
These ranges for $k$ and $\ell$ ensure that the set $\bt^a$ remains empty.
Since
$\cBo(\varnothing) = \cBg(\varnothing) = 1$, the recurrence relations will prove the statement of the proposition by induction on the cardinalities.
\begin{itemize}\itemsep=0pt
\item $-a\leq \ell<k\leq a$. According to \r{part-so}, the cardinality of the empty subset $\bt^a_{\so}$
is given by the step function
$\Theta(-i-a-1)$, which must vanish. It immediately results that the summation
in the recurrence relation \eqref{main-rr} is restricted to the range $-a\leq i\leq \ell$.
Analogously, the cardinality of the empty subset $\bt^a_{\sth}$ given by \r{part-sth}
is equal to $\Theta(j-a-1)$ which restricts the range in the summation
over $j$ to $k\leq j\leq a$. This means that the recurrence relation
for \smash{$\Z^k_\ell\cdot
\BB\bigl(\bt\bigr)|_{\bt^a=\varnothing}$}
implies only the monodromy entries $T_{i,j}(z)$ for $-a\leq i\leq \ell$ and
$k\leq j\leq a$. The recurrence relation $\Z^a_{-a}\cdot
\BB\bigl(\bt\bigr)|_{\bt^a=\varnothing}$ will have only one term corresponding to the action of
monodromy entry $T_{-a,a}(z)$ on the off-shell Bethe vector.

Moreover, the action of the zero mode operators $\sT_{\ell+1,\ell}$ for
$\ell=0,1,\dots,a-1$ onto the Bethe vector~\r{em1}
\begin{gather*}
\sT_{\ell+1,\ell}\cdot\BBo\bigl(\bigl\{\bt^s\bigr\}_0^{a-1},\varnothing,\bigl\{\bt^s\bigr\}_{a+1}^{n-1}\bigr)\nonumber\\
\qquad=
\sum_{\rm part}\bigl(\vk_{\ell+1} \alpha_\ell\bigl(\bt^\ell_{\so}\bigr)
\Oml_\ell\bigl(\bt^\ell_{\st},\bt^\ell_{\so}|\bt^{\ell-1},\bt^{\ell+1}\bigr)
-\vk_\ell \Omr_\ell\bigl(\bt^\ell_{\so},\bt^\ell_{\st}|\bt^{\ell-1},\bt^{\ell+1}\bigr)\bigr)\nonumber\\
\phantom{\qquad=}{}\times
\BBo\bigl(\bigl\{\bt^s\bigr\}_0^{\ell-1},\bt^\ell_{\st},\bigl\{\bt^s\bigr\}_{\ell+1}^{a-1},
\varnothing,\bigl\{\bt^s\bigr\}_{a+1}^{n-1}\bigr)\label{zmac-red}
\end{gather*}
does not affect the sets of Bethe parameters \smash{$\bigl\{\bt^s\bigr\}_{a+1}^{n-1}$}
and so coincides with the action of the zero mode operators
onto the Bethe vectors in $\mathfrak{o}_{2a+1}$-invariant model.
This shows that the recurrence relations for $\Z^k_\ell\cdot
\BB\bigl(\bt\bigr)|_{\bt^a=\varnothing}$ for $-a\leq \ell< k\leq a$
 coincides with the $\mathfrak{o}_{2a+1}$ recurrence relation for the vector
\smash{$\BBo\bigl(\bigl\{\bt\bigr\}_0^{a-1}\bigr)$}.
\item $a+1\leq \ell<k\leq n$. In this case, the cardinalities of the empty subsets
$\bt^a_{\so}=\bt^a_{\sth}=\varnothing$ given by the equalities
\r{part-so} and \r{part-sth} yield a restriction only for the index $i\geq a+1$.
The summation over $i$ and $j$
in the right-hand side of the recurrence relation for \smash{$\Z^k_\ell\cdot
\BB\bigl(\bt\bigr)|_{\bt^a=\varnothing}$} in this case will be restricted
to the range $a+1\leq i\leq \ell<k\leq j\leq n$.
The Bethe vector $\Z^n_{a+1}\cdot \BB\bigl(\bt\bigr)$ will be proportional
to the action of the monodromy matrix entry \smash{$T_{a+1,n}(z)\cdot\BB\bigl(\bt\bigr)$} and the
action of the zero mode operators~$\sT_{\ell+1,\ell}$ for
$\ell=a+1,\dots,n-1$ onto Bethe vectors \r{em1} coincide~with $\mathfrak{gl}_{n-a}$-type actions~\r{Az6}.
The~recurrence relation~\r{main-rr} in this case
will not affect the sets of~Bethe parameters \smash{$\bigl\{\bt^s\bigr\}_0^{a-1}$} and will
coincides~with the~$\mathfrak{gl}_{n-a}$-type
recurrence relation for the Bethe vector~\smash{$\Z^k_\ell\cdot\BBg\bigl(\bigl\{\bt^s\bigr\}_{a+1}^{n-1}\bigr)$},
see~\eqref{gl-red1}.
\item $-n\leq \ell<k\leq -a-1$. Using again \r{part-so} and \r{part-sth}, we can
observe that the summation over $i$ and $j$ in the right-hand side of the recurrence relation
for the Bethe vector $\Z^k_\ell\cdot
\BB\bigl(\bt\bigr)|_{\bt^a=\varnothing}$ will be restricted to the range
$-n\leq i\leq \ell<k\leq j\leq -a-1$. Using arguments of Section~\ref{sect:red-gln},
one can show that the corresponding
recurrence relation for $-n\leq \ell< k\leq -a-1$ will be equivalent to
the $\mathfrak{gl}_{n-a}$-type recurrence relation for
$\hBB\bigl(\bt\bigr)$, where $\hat\BB\bigl(\bt\bigr)$ is the Bethe vector symmetric
to $\BB\bigl(\bt\bigr)$
in the sense of the symmetry introduced in the paper \cite{LPRS-New}.\hfill \qed
\end{itemize}\renewcommand{\qed}{}
\end{proof}
\begin{rem}
When $a=n-1$, the recurrence relations described in the second and third
items are absent and the recurrence relations described in the first item
yield the reduction over rank $n\to n-1$ of the recurrence relations for the
off-shell Bethe vectors in $\mathfrak{o}_{2n-1}$-invariant integrable models.
\end{rem}

\begin{rem}[splitting property]
Using the statement of Proposition~\ref{emb-pr} and
the rectangular recurrence relations \r{main-rr} for the
off-shell Bethe vectors in $\mathfrak{o}_{2n+1}$-invariant
integrable models, one can prove a~splitting
property for these vectors, similar to~\r{emg3}. Again, the splitting property
is a direct consequence of the presentation of the Bethe vectors within
the projection method~\cite{LP2}.
\end{rem}

\begin{exam}\label{Exam41}
Proposition~{\rm\ref{emb-pr}} for the case when $a=0$ allows
to get Bethe vectors in $\mathfrak{o}_{2n+1}$-invariant integrable model
from the Bethe vectors of the $\mathfrak{gl}_{n}$-invariant model.
Below we illustrate this in the case of $\mathfrak{o}_5$-invariant
model.
According to the general recurrence relation \r{ex3} for the Bethe vector
$\Z_0^1\cdot\BB\bigl(\bt\bigr)$ in the case when $n=2$ and $\bt^0=\varnothing$,
we obtain for the Bethe vector $\BBo\bigl(z,\bt^1\bigr)$ the presentation
\begin{equation*}
\BBo\bigl(z,\bt^1\bigr)=\frac{1}{\lambda_1(z) g\bigl(\bt^1,z\bigr)}
\left(
T_{0,1}(z)\cdot\BBg\bigl(\varnothing,\bt^1\bigr)+
\sum_{a=1}^{r_1}g\bigl(t^1_a,z\bigr) \alpha_1\bigl(t^1_a\bigr)
\frac{g\bigl(\bt^1_a,t^1_a\bigr)}{h\bigl(t^1_a,\bt^1_a\bigr)} T_{0,2}(z)\cdot\BBg\bigl(\varnothing,\bt^1_a\bigr)
\right),
\end{equation*}
where set $\bt^1_a=\bt^1\setminus\bigl\{t^1_a\bigr\}$ and
$\BB\bigl(\varnothing,\bt^1\bigr)$ are Bethe vectors in
$\mathfrak{gl}_2$-integrable model.
 Here we restrict the summation over the index $i$ to the case when cardinality of the set $\bt^0$
is equal to $0$. When cardinality $\big|\bt^1\big|=1$ and $\bt^1=\bigl\{t^1\bigr\}$ this formula
yields a presentation of the Bethe vector $\BB\bigl(t^0,t^1\bigr)$
\begin{equation}\label{o5ex}
\BBo\bigl(t^0,t^1\bigr)=\frac{1}{\lambda_1\bigl(t^0\bigr) \lambda_2\bigl(t^1\bigr) g\bigl(t^1,t^0\bigr)}
\bigl(T_{0,1}(t^0) T_{1,2}\bigl(t^1\bigr)+g\bigl(t^1,t^0\bigr) T_{0,2}\bigl(t^0\bigr) T_{1,1}\bigl(t^1\bigr)\bigr)\rvec ,
\end{equation}
 where we rename $z\to t^0$. This formula has the same form as for the analogous Bethe vector $\BBg\bigl(t^1,t^2\bigr)$.

We can apply the recurrence relation \r{ex3} to the Bethe vector \r{o5ex}
once again to obtain the presentation for the Bethe vector $\BBo\bigl(\bigl\{t^0_1,t^0_2\bigr\},t^1\bigr)$
in the form
\begin{align*}
\BBo\bigl(\bigl\{t^0_1,t^0_2\bigr\},t^1\bigr)={}&
\bigl({\lambda_1\bigl(t^0_1\bigr) \lambda_1\bigl(t^0_2\bigr) \lambda_2\bigl(t^1\bigr)
\fgo\bigl(t^0_1+c/2,t^0_2\bigr) g\bigl(t^1,t^0_1\bigr) g\bigl(t^1,t^0_2\bigr)}\bigr)^{-1}\Bigl(T_{0,1}\bigl(t^0_1\bigr) T_{0,1}\bigl(t^0_2\bigr) T_{1,2}\bigl(t^1\bigr)\\
&+
g\bigl(t^1,t^0_2\bigr)\Bigl(T_{0,1}\bigl(t^0_1\bigr) T_{0,2}\bigl(t^0_2\bigr) T_{1,1}\bigl(t^1\bigr)+ g\bigl(t^0_2,t^0_1+c/2\bigr)\Bigl(T_{-2,1}\bigl(t^0_1\bigr) T_{1,1}\bigl(t^0_2\bigr) T_{2,2}\bigl(t^1\bigr)\\
&+
h\bigl(t^1,t^0_2\bigr) T_{-1,1}\bigl(t^0_1\bigr) T_{1,2}\bigl(t^1\bigr) T_{1,1}\bigl(t^0_2\bigr)
\Bigr)+g\bigl(t^1,t^0_1\bigr) h\bigl(t^1,t^0_2\bigr)\Bigl(T_{0,2}\bigl(t^0_1\bigr) \\
& \phantom{+}{}\times T_{0,1}\bigl(t^0_2\bigr) T_{1,1}\bigl(t^1\bigr)+
g\bigl(t^0_2,t^0_1+c/2\bigr)
T_{-1,2}\bigl(t^0_1\bigr) T_{1,1}\bigl(t^0_2\bigr) T_{1,1}\bigl(t^1\bigr)\Bigr)\Bigr)\Bigr)\rvec .
\end{align*}

This Bethe vector already differs greatly from the corresponding example for the
Bethe vector in the case of $\mathfrak{gl}_3$. Indeed, due to the
recurrence relations given in the Corollary~{\rm\ref{cor1}}, the latter can be written
in the following form
\begin{align*}
\BBg\bigl(\bigl\{t^1_1,t^1_2\bigr\},t^2\bigr)={}&\bigl({\lambda_2\bigl(t^1_1\bigr) \lambda_2\bigl(t^1_2\bigr)
\lambda_3\bigl(t^2\bigr)}
{h\bigl(t^1_1,t^1_2\bigr) h\bigl(t^1_2,t^1_1\bigr)} {g\bigl(t^2,t^1_1\bigr) g\bigl(t^2,t^1_2\bigr)}\bigr)^{-1}\\
&\times\bigl(T_{1,2}\bigl(t^1_2\bigr) T_{1,2}\bigl(t^1_1\bigr) T_{2,3}\bigl(t^2\bigr) +\lambda_2\bigl(t^2\bigr) g\bigl(t^2,t^1_1\bigr) f\bigl(t^1_1,t^1_2\bigr) T_{1,3}\bigl(t^1_1\bigr)
T_{1,2}\bigl(t^1_2\bigr)\\
&\phantom{\times}{}+\lambda_2\bigl(t^2\bigr) g\bigl(t^2,t^1_2\bigr) f\bigl(t^1_2,t^1_1\bigr) T_{1,3}\bigl(t^1_2\bigr) T_{1,2}\bigl(t^1_1\bigr)
\bigr) |0\rangle .
\end{align*}
\end{exam}

\section{Conclusion}\label{conclu}

Using the zero modes method, we describe in this paper the rectangular recurrence
relations for the off-shell Bethe vectors in $\mathfrak{gl}_n$- and
$\mathfrak{o}_{2n+1}$-invariant integrable models. These relations
are presented as sums of the actions of the monodromy entries
$T_{i,j}(z)$ with $i<j$ on the off-shell Bethe vectors $\BB\bigl(\bt\bigr)$ and sums
over partitions of the sets of Bethe parameters. When one of the
set $\bt^s$ becomes empty, the recurrence relations given by
Theorem~\ref{main-th} describe a factorization of the Bethe vectors
and relates to the symmetry of $\mathfrak{gl}_n$-type Bethe vectors found in the
paper \cite{LPRS-New}.

Now that the recurrence relations are obtained, one can deduce recurrence relations for scalar products, and for their building block, the so-called highest coefficients. One can also compute the norm of the on-shell Bethe vectors and express it as a determinant. Calculations are under review, and the results will appear soon.

Finally, the outcome obtained in this paper can be generalized for the Bethe vectors
in $\mathfrak{sp}_{2n}$- and $\mathfrak{so}_{2n}$-invariant integrable models. The procedure is similar to the one presented here and is
currently under investigation.
The issue will be published elsewhere.

\appendix

\section[Proof of the Theorem 3.1]{Proof of Theorem~\ref{rec-pr}}\label{ApA}

The proof of this theorem is based on the following lemma.
\begin{lem}\label{l-pr}
We recall that $\bar u^s=\bigl\{\bar t^s,z\bigr\}$.
The off-shell Bethe vector
\smash{$\BB\bigl(\bigl\{\bt^s\bigr\}_1^{\ell-1},\bigl\{\bu^s\bigr\}_\ell^{n-1}\bigr)$} for $1\leq \ell< n$
can be presented in the form
\begin{align}
\Z_{\ell}^n\cdot \BB\bigl(\bt\bigr)={}&
\frac{1}{\mu_\ell^n\bigl(z;\bt\bigr)}
\sum_{i=1}^\ell \sum_{\rm part} g\bigl(z,\bt^{\ell-1}_{\so}\bigr)
\prod_{p=i}^{\ell-1}\Omr\bigl(\bt^p_{\so},\bt^p_{\st}|\bt^{p-1}_{\st},\bt^{p+1}_{\st}\bigr)\nonumber
\\
&\times T_{i,n}(z)\cdot\BB\bigl(\bigl\{\bt^s\bigr\}_1^{i-1},\bigl\{\bt^s_{\st}\bigr\}_i^{\ell-1},
\bigl\{\bt^s\bigr\}_\ell^{n-1}\bigr) ,\label{Az8}
\end{align}
where the sum goes over partitions $\bigl\{\bt^s_{\so},\bt^s_{\st}\bigr\}\vdash\bt^s$
with cardinalities $\big|\bt^s_{\so}\big|=1$ for all $s=i,\dots,\ell-1$, $\bt^\ell$ is not
 partitioned with $\bt^\ell_{\st}=\bt^\ell$, $\bt^{i-1}_{\st} = \bt^{i-1}$, $\bt^{i-1}_{\so} = \varnothing$, and $\bt^0_{\so} = \varnothing$.
\end{lem}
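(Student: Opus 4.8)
The plan is to prove Lemma~\ref{l-pr} by induction on $\ell$, with base case $\ell=1$ and inductive step $\ell-1\to\ell$ (needed only when $n\geq 3$). For $\ell=1$ the identity \eqref{Az8} degenerates: the summation index can only be $i=1$, the range of partitioned colours $i,\dots,\ell-1$ is empty, $\bt^{\ell-1}_{\so}=\bt^0_{\so}=\varnothing$ so $g\bigl(z,\bt^{\ell-1}_{\so}\bigr)=1$, the $\Omr$-product is empty, and the Bethe vector on the right is $\BB(\bt)$; thus \eqref{Az8} reduces to the normalisation \eqref{Az4}.

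For the inductive step I assume \eqref{Az8} holds for $\Z^n_{\ell-1}$ applied to an arbitrary off-shell Bethe vector, and I act with the zero mode $\sT_{\ell,\ell-1}$ on $\Z^n_{\ell-1}\cdot\BB(\bt)=\BB\bigl(\bigl\{\bt^s\bigr\}_1^{\ell-2},\bigl\{\bu^s\bigr\}_{\ell-1}^{n-1}\bigr)$, computing the result in two ways. First I use the zero-mode action \eqref{Az6} with index $\ell-1$: it partitions the colour-$(\ell-1)$ set $\bu^{\ell-1}=\bigl\{\bt^{\ell-1},z\bigr\}$ into $\bigl\{\bu^{\ell-1}_{\so},\bu^{\ell-1}_{\st}\bigr\}$ with one element in $\bu^{\ell-1}_{\so}$. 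The summand with $\bu^{\ell-1}_{\so}=\{z\}$ is $\Z^n_\ell\cdot\BB(\bt)$ times a scalar $C_z$; using $g(z,z)=\infty$ and $h(z,z)=1$ one sees that the $\Oml$-piece of $C_z$ vanishes, leaving $C_z=-\vk_{\ell-1}\,\mu^n_\ell(z;\bt)/\mu^n_{\ell-1}(z;\bt)$. The $r_{\ell-1}$ summands with $\bu^{\ell-1}_{\so}=\bigl\{t^{\ell-1}_a\bigr\}$ equal $\Z^n_{\ell-1}$ applied to $\BB$ with $t^{\ell-1}_a$ removed from colour $\ell-1$, hence are explicit by the induction hypothesis.

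The second computation substitutes the induction hypothesis for $\Z^n_{\ell-1}\cdot\BB(\bt)$ and pushes $\sT_{\ell,\ell-1}$ through each $T_{i,n}(z)$ by \eqref{Az5}: since $\ell<n$ the commutator is $-\vk_{\ell-1}\,\delta_{\ell-1,i}\,T_{\ell,n}(z)$, contributing a lone $T_{\ell,n}(z)\cdot\BB(\bt)$ term, while the remaining part is $T_{i,n}(z)$ acting on $\sT_{\ell,\ell-1}\cdot\BB(\dots)$, which I expand again with \eqref{Az6}, now partitioning the full colour-$(\ell-1)$ set with neighbours $\bt^{\ell-2}_{\st}$ and $\bt^{\ell}$. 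Equating the two computations, dividing by $C_z$ and inserting the induction hypothesis for the removed-parameter terms, I obtain $\Z^n_\ell\cdot\BB(\bt)$ as a sum over $i=1,\dots,\ell$ of $T_{i,n}(z)$ acting on off-shell Bethe vectors whose colours $i,\dots,\ell-1$ are each partitioned into two subsets — the shape of \eqref{Az8}.

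The main obstacle is the final matching of scalar coefficients against $\bigl(\mu^n_\ell(z;\bt)\bigr)^{-1}\,g\bigl(z,\bt^{\ell-1}_{\so}\bigr)\prod_{p=i}^{\ell-1}\Omr\bigl(\bt^p_{\so},\bt^p_{\st}\,|\,\bt^{p-1}_{\st},\bt^{p+1}_{\st}\bigr)$. This breaks into: checking the $i=\ell$ term (which comes only from the commutator and is controlled by the value of $C_z$ above); verifying that the commutator term with $i=\ell-1$ combined with the $\sT_{\ell,\ell-1}$-action builds the new top factor $\Omr\bigl(\bt^{\ell-1}_{\so},\bt^{\ell-1}_{\st}\,|\,\bt^{\ell-2}_{\st},\bt^{\ell}\bigr)$; and verifying that the removed-parameter corrections cancel all spurious terms. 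The last point reduces to rational-function identities relating $\Oml$, $\Omr$ and the factors $\mu^n_{\bullet}$ evaluated on the $z$-extended sets $\bu^s$ to the same objects without $z$ (in particular the telescoping ratio $\mu^n_\ell/\mu^n_{\ell-1}$ read off from \eqref{Az2}), together with the cancellation of every twist parameter $\vk_i$, in accordance with Remark~\ref{rem: chi homogeneity}. Once Lemma~\ref{l-pr} holds, it furnishes the $k=n$ base case for a second induction over $k$ that yields Theorem~\ref{rec-pr}.
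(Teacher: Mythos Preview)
Your proposal is correct and follows essentially the same route as the paper: induction on $\ell$ with base case \eqref{Az4}, inductive step obtained by acting with the zero mode $\sT_{\ell,\ell-1}$ (the paper writes $\sT_{\ell+1,\ell}$, going $\ell\to\ell+1$, which is the same up to reindexing) on the induction hypothesis, computing the action in two ways via \eqref{Az6} on the Bethe vector and via the commutator \eqref{Az5} with $T_{i,n}(z)$, and then matching. The one point worth sharpening is that the paper uses the $\vk$-homogeneity (Remark~\ref{rem: chi homogeneity}) not merely as a final consistency check but as the actual computational device: the terms proportional to $\vk_\ell$ on both sides are shown to coincide directly (using $h\bigl(\bt^\ell_{\so},\bt^{\ell-1}_{\so}\bigr)$-factors that appear symmetrically), so they cancel, and the equality of the $\vk_{\ell-1}$-coefficients is then equivalent to \eqref{Az8} at level $\ell$ once one invokes the single elementary identity
\[
g\bigl(z,\bt^{\ell-2}_{\so}\bigr)\Bigl(h\bigl(\bt^{\ell-1}_{\so},\bt^{\ell-2}_{\so}\bigr)-\frac{f\bigl(\bt^{\ell-1}_{\so},z\bigr)}{g\bigl(\bt^{\ell-1}_{\so},\bt^{\ell-2}_{\so}\bigr)}\Bigr)=g\bigl(z,\bt^{\ell-1}_{\so}\bigr),
\]
which is the concrete content of your ``rational-function identities'' and ``removed-parameter corrections''.
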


\begin{proof}
 We prove relation \r{Az8} through an induction over $\ell$.
Equality
\r{Az4} can be considered as the base of this induction since for $\ell=1$
there are no partitions in the right-hand side of \r{Az8} and it becomes identical to
\r{Az4}.

Assuming that equality \r{Az8} is valid for
some fixed $\ell< n-1$, we multiply it by the
normalization factor $\mu^n_\ell\bigl(z;\bt\bigr)$ and apply to both sides the
zero mode operator $\sT_{\ell+1,\ell}$.
Using the action of the zero modes~\r{Az6}, we get from the left-hand side of \r{Az8}
\begin{gather}
-\vk_{\ell} \mu^n_{\ell+1}\bigl(z;\bt\bigr) \Z_{\ell+1}^n\cdot \BB\bigl(\bt\bigr) +\sum_{\rm part}\bigl(\vk_{\ell+1} \alpha_{\ell}\bigl(\bt^\ell_{\so}\bigr)
\Oml\bigl(\bt^\ell_{\st},\bt^\ell_{\so}|\bt^{\ell-1},\bt^{\ell+1}\bigr)-
\vk_\ell f\bigl(\bt^\ell_{\so},z\bigr)
\Omr\bigl(\bt^\ell_{\so},\bt^\ell_{\st}|\bt^{\ell-1},\bt^{\ell+1}\bigr)\bigr)\nonumber\\
\qquad \times \mu^n_\ell\bigl(z;\bt^{\ell-1},\bt^\ell_{\st},\bt^{n-1}\bigr)
\Z_{\ell}^n\cdot \BB\bigl(\bigl\{\bt^s\bigr\}_1^{\ell-1},\bt^\ell_{\st},\bigl\{\bt^s\bigr\}_{\ell+1}^{n-1}\bigr) .\label{Az15}
\end{gather}
Here the sum goes over partitions $\bigl\{\bt^\ell_{\so},\bt^\ell_{\st}\bigr\}\vdash
\bt^\ell$ with cardinality $\big|\bt^\ell_{\so}\big|=1$. In the last line of \eqref{Az15}, we wrote explicitly the arguments
of the function $\mu^n_\ell\bigl(z;\bt^{\ell-1},\bt^\ell_{\st},\bt^{n-1}\bigr)$ for clarification.
Using the induction assumption, relation \r{Az15} rewrites
\begin{gather}
-\vk_{\ell} \mu^n_{\ell+1}\bigl(z;\bt\bigr) \Z_{\ell+1}^n\cdot \BB\bigl(\bt\bigr) +\sum_{\rm part}
\bigl(\vk_{\ell+1} \alpha_{\ell}\bigl(\bt^\ell_{\so}\bigr)
\Oml\bigl(\bt^\ell_{\st},\bt^\ell_{\so}|\bt^{\ell-1},\bt^{\ell+1}\bigr)-
\vk_\ell f\bigl(\bt^\ell_{\so},z\bigr)
\Omr\bigl(\bt^\ell_{\so},\bt^\ell_{\st}|\bt^{\ell-1},\bt^{\ell+1}\bigr)\bigr)\nonumber\\
\qquad \times
\sum_{i=1}^\ell g\bigl(z,\bt^{\ell-1}_{\so}\bigr)
\prod_{p=i}^{\ell-1}\Omr\bigl(\bt^p_{\so},\bt^p_{\st}|\bt^{p-1}_{\st},\bt^{p+1}_{\st}\bigr)
 T_{i,n}(z)\cdot\BB\bigl(\bigl\{\bt^s\bigr\}_1^{i-1},\bigl\{\bt^s_{\st}\bigr\}_i^{\ell},
\bigl\{\bt^s\bigr\}_{\ell+1}^{n-1}\bigr) .\label{Az15a}
\end{gather}
Here the sum goes over partitions $\bigl\{\bt^s_{\so},\bt^s_{\st}\bigr\}\vdash\bt^s$
with cardinalities $\big|\bt^s_{\so}\big|=1$ for all $s=i,\dots,\ell$, $\bt^{i-1}_{\st} = \bt^{i-1}$, and $\bt^{i-1}_{\so} = \varnothing$.

To get \r{Az15} and \r{Az15a}, we have used the equalities
\begin{gather*}
\Oml\bigl(\bt^\ell,z|\bt^{\ell-1},\bu^{\ell+1}\bigr)=0 ,\qquad
\Omr\bigl(z,\bt^\ell |\bt^{\ell-1},\bu^{\ell+1}\bigr)=\frac{g\bigl(z,\bt^\ell\bigr)}{h\bigl(\bt^\ell,z\bigr)}
\frac{h\bigl(\bt^{\ell+1},z\bigr)}{g\bigl(z,\bt^{\ell-1}\bigr)} ,\\
\Oml\bigl(\bigl\{\bt^\ell_{\st},z\bigr\},\bt^\ell_{\so}|\bt^{\ell-1},\bu^{\ell+1}\bigr)=
\frac{1}{h\bigl(\bt^\ell_{\so},z\bigr)} \Oml\bigl(\bt^\ell_{\st},\bt^\ell_{\so}|\bt^{\ell-1},\bt^{\ell+1}\bigr) ,\\
\Omr\bigl(\bt^\ell_{\so},\bigl\{z,\bt^\ell_{\st}\bigr\}|\bt^{\ell-1},\bu^{\ell+1}\bigr)
=g\bigl(\bt^\ell_{\so},z\bigr) \Omr\bigl(\bt^\ell_{\so},\bt^\ell_{\st}|\bt^{\ell-1},\bt^{\ell+1}\bigr) ,\\
\mu^n_\ell\bigl(z;\bt\bigr) \frac{g\bigl(z,\bt^\ell\bigr)}{h\bigl(\bt^\ell,z\bigr)}
\frac{h\bigl(\bt^{\ell+1},z\bigr)}{g\bigl(z,\bt^{\ell-1}\bigr)} = \mu^n_{\ell+1}\bigl(z;\bt\bigr) ,\qquad
\mu^n_\ell\bigl(z;\bt\bigr)=h\bigl(\bt^\ell_{\so},z\bigr) \mu^n_\ell\bigl(z;\bt^{\ell-1},\bt^\ell_{\st},\bt^{n-1}\bigr) .
\end{gather*}

Using the commutation relations
$
[\sT_{\ell+1,\ell},T_{i,n}(z)]=-\vk_{\ell}\ \delta_{\ell,i}\ T_{\ell+1,n}(z)
$,
the action of the zero mode $\sT_{\ell+1,\ell}$ on the right-hand side of the induction assumption \r{Az8}
reads
\begin{gather}
-\vk_{\ell} T_{\ell+1,n}(z)\cdot\BB\bigl(\bt\bigr)+
\sum_{i=1}^{\ell}\sum_{\rm part}g\bigl(z,\bt^{\ell-1}_{\so}\bigr)\prod_{p=i}^{\ell-2}
\Omr\bigl(\bt^p_{\so},\bt^p_{\st}|\bt^{p-1}_{\st},\bt^{p+1}_{\st}\bigr)
\Omr\bigl(\bt^{\ell-1}_{\so},\bt^{\ell-1}_{\st}|\bt^{\ell-2}_{\st},\bigl\{\bt^\ell_{\so},\bt^\ell_{\st}\bigr\}\bigr)\nonumber\\
\qquad\times
\bigl(\vk_{\ell+1} \alpha_{\ell}\bigl(\bt^\ell_{\so}\bigr)
\Oml\bigl(\bt^\ell_{\st},\bt^\ell_{\so}|\bt^{\ell-1}_{\st},\bt^{\ell+1}\bigr)-
\vk_\ell
\Omr\bigl(\bt^\ell_{\so},\bt^\ell_{\st}|\bt^{\ell-1}_{\st},\bt^{\ell+1}\bigr)\bigr)\nonumber\\
\qquad\times T_{i,n}(z)
\cdot \BB\bigl(\bigl\{\bt^s\bigr\}_1^{i-1},\bigl\{\bt^s_{\st}\bigr\}_i^{\ell},\bigl\{\bt^s\bigr\}_{\ell+1}^{n-1}\bigr)\bigr) .\label{Az16}
\end{gather}
Here the sum over partitions is the same as in \r{Az15a}.

Let us compare the coefficients of the twisting parameters $\vk_{\ell+1}$
and $\vk_\ell$ in \r{Az15a} and \r{Az16}.
Due to the relations
\begin{gather*}
\Oml\bigl(\bt^\ell_{\st},\bt^\ell_{\so}|\bigl\{\bt^{\ell-1}_{\so},\bt^{\ell-1}_{\st}\bigr\},\bt^{\ell+1}\bigr)=
h\bigl(\bt^\ell_{\so},\bt^{\ell-1}_{\so}\bigr)
\Oml\bigl(\bt^\ell_{\st},\bt^\ell_{\so}|\bt^{\ell-1}_{\st},\bt^{\ell+1}\bigr),\\
 \Omr\bigl(\bt^{\ell-1}_{\so},\bt^{\ell-1}_{\st}|\bt^{\ell-2}_{\st},\bigl\{\bt^{\ell}_{\so},\bt^\ell_{\st}\bigr\}\bigr)=
h(\bt^\ell_{\so},\bt^{\ell-1}_{\so})
\Omr\bigl(\bt^{\ell-1}_{\so},\bt^{\ell-1}_{\st}|\bt^{\ell-2}_{\st},\bt^{\ell}_{\st}\bigr),
\end{gather*}
the terms proportional to $\vk_{\ell+1}$
in \r{Az15a} and in \r{Az16} are equal, and cancel each other when we equate~\r{Az15a} and \r{Az16}. Then $\chi_\ell$ factorizes globally, and we get a relation with no explicit dependence of the $\chi_i$ parameters:
this is a manifestation of the
principle described in Remark~\ref{rem: chi homogeneity}.

On the other hand, using in \r{Az15a} the equality
\begin{equation*}
\Omr\bigl(\bt^\ell_{\so},\bt^\ell_{\st}|\bigl\{\bt^{\ell-1}_{\so},\bt^{\ell-1}_{\st}\bigr\},\bt^{\ell+1}\bigr)=
\frac{1}{g\bigl(\bt^\ell_{\so},\bt^{\ell-1}_{\so}\bigr)}
\Omr\bigl(\bt^\ell_{\so},\bt^\ell_{\st}|\bt^{\ell-1}_{\st},\bt^{\ell+1}\bigr)
\end{equation*}
we conclude that the equality between \r{Az15a} and \r{Az16} is
equivalent to the relation \r{Az8} at $\ell\to \ell+1$ due to the identities
\begin{gather}
g\bigl(z,\bt^{\ell-1}_{\so}\bigr)\left(h\bigl(\bt^\ell_{\so},\bt^{\ell-1}_{\so}\bigr)-
\frac{f\bigl(\bt^\ell_{\so},z\bigr)}{g\bigl(\bt^\ell_{\so},\bt^{\ell-1}_{\so}\bigr)}\right)\nonumber\\
\qquad=
\frac{g\bigl(z,\bt^{\ell-1}_{\so}\bigr)}
{g\bigl(\bt^\ell_{\so},\bt^{\ell-1}_{\so}\bigr)}\bigl(f\bigl(\bt^\ell_{\so},\bt^{\ell-1}_{\so}\bigr)-
f\bigl(\bt^\ell_{\so},z\bigr)\bigr)\nonumber
\\
\qquad=\frac{g\bigl(z,\bt^{\ell-1}_{\so}\bigr)}
{g\bigl(\bt^\ell_{\so},\bt^{\ell-1}_{\so}\bigr)}\bigl(g\bigl(\bt^\ell_{\so},\bt^{\ell-1}_{\so}\bigr)-
g\bigl(\bt^\ell_{\so},z\bigr)\bigr)=
\frac{g\bigl(z,\bt^{\ell-1}_{\so}\bigr) g\bigl(\bt^\ell_{\so},\bt^{\ell-1}_{\so}\bigr)
g\bigl(\bt^\ell_{\so},z\bigr)}
{g\bigl(\bt^\ell_{\so},\bt^{\ell-1}_{\so}\bigr) g\bigl(\bt^{\ell-1}_{\so},z\bigr)}=
g\bigl(z,\bt^{\ell}_{\so}\bigr)\label{simp}
\end{gather}
 for $i<\ell$ and
$ 1 - f(\bt^{i}_{\so}, z) = g(z, \bt^{i}_{\so})$
for $i=\ell$.
This ends the inductive proof of relation~\eqref{Az8}. \end{proof}

{\bf End of theorem's proof.}
To finish the proof of Theorem~\ref{rec-pr}
one has to perform an inductive proof over $k$ for the recurrence
relation \r{Az19}. We will consider the induction step $k+1\to k$ taking
as induction base the just proved equality \r{Az8}.

Let us assume that equality \r{Az19} is valid for $\Z_{\ell}^{k+1}$ for some $k<n$.
The induction proof means
 that this assumption should lead to the equality \r{Az19} for $\Z_{\ell}^{k}$.
To perform the induction step, we multiply both sides of \r{Az19} at $k+1$ by the function~$\mu^{k+1}_\ell\bigl(z;\bt\bigr)$ and act on this relation by the zero mode operator~$\sT_{k+1,k}$.
Using the equalities
\begin{gather*}
\Oml\bigl(\bt^k,z|\bu^{k-1},\bt^{k+1}\bigr)=\frac{g\bigl(\bt^k,z\bigr)}{h\bigl(z,\bt^k\bigr)}
\frac{h\bigl(z,\bt^{k-1}\bigr)}{g\bigl(\bt^{k+1},z\bigr)} ,\qquad
\Omr\bigl(z,\bt^k|\bu^{k-1},\bt^{k+1}\bigr)=0 ,\\
\Oml\bigl(\bigl\{\bt^k_{\st},z\bigr\},\bt^k_{\sth}|\bu^{k-1},\bt^{k+1}\bigr)=
g\bigl(z,\bt^k_{\sth}\bigr) \Oml\bigl(\bt^k_{\st},\bt^k_{\sth}|\bt^{k-1},\bt^{k+1}\bigr) ,\\
\Omr\bigl(\bt^k_{\sth},\bigl\{\bt^k_{\st},z\bigr\}|\bu^{k-1},\bt^{k+1}\bigr)=
\frac{1}{h\bigl(z,\bt^k_{\sth}\bigr)} \Omr\bigl(\bt^k_{\sth},\bt^k_{\st}|\bt^{k-1},\bt^{k+1}\bigr) ,\\
\alpha_k(z) \mu^{k+1}_\ell\bigl(z;\bt\bigr) \frac{g\bigl(\bt^k,z\bigr)}{h\bigl(z,\bt^k\bigr)}
\frac{h\bigl(z,\bt^{k-1}\bigr)}{g\bigl(\bt^{k+1},z\bigr)}=\mu^k_\ell\bigl(z;\bt\bigr) ,\qquad
\mu^{k+1}_\ell\bigl(z;\bt\bigr)=h\bigl(z,\bt^k_{\sth}\bigr) \mu^{k+1}_\ell\bigl(z;\bt^{l-1},\bt^\ell,\bt^k_{\st},\bt^{k+1}\bigr),
\end{gather*}
one can write the left-hand side of the resulting relation as follows:
\begin{gather}
\mu^{k+1}_\ell\bigl(z;\bt\bigr) \sT_{k+1,k}\cdot
\BB\bigl(\bigl\{\bt^s\bigr\}_{1}^{\ell-1},\bigl\{\bu^s\bigr\}_{\ell}^{k},\bigl\{\bt^s\bigr\}_{k+1}^{n-1}\bigr)\nonumber\\
\qquad
=\vk_{k+1} \mu^k_\ell\bigl(z;\bt\bigr) \Z^k_\ell\cdot\BB\bigl(\bt\bigr)+ \sum_{i=1}^\ell\sum_{j=k+1}^n\sum_{\rm part}
\bigl(\vk_{k+1} \alpha_k\bigl(\bt^k_{\sth}\bigr) \Oml\bigl(\bt^k_{\st},\bt^k_{\sth}|\bt^{k-1},\bt^{k+1}\bigr)
f\bigl(z,\bt^k_{\sth}\bigr)\nonumber\\
\phantom{\qquad
=}{}-\vk_k \Omr\bigl(\bt^k_{\sth},\bt^k_{\st}|\bt^{k-1},\bt^{k+1}\bigr)\bigr) \Xi^{\ell,k+1}_{i,j}\bigl(z;\bigl\{\bt^s\bigr\}_{1}^{k-1},
\bt^k_{\st},\bigl\{\bt^s\bigr\}_{k+1}^{n-1}\bigr)\nonumber\\
\phantom{\qquad
=-}{}\times
T_{i,j}(z)\cdot\BB\bigl(\bigl\{\bt^s\bigr\}_{1}^{i-1},\bigl\{\bt^s_{\st}\bigr\}_{i}^{\ell-1},\bigl\{\bt^s\bigr\}_{\ell}^{k-1},
\bigl\{\bt^s_{\st}\bigr\}_{k}^{j-1},\bigl\{\bt^s\bigr\}_{j}^{n-1}\bigr) .\label{Az20}
\end{gather}
Here for further convenience, the partition resulting from the action of the zero mode operator
$\sT_{k+1,k}$ is noted $\bigl\{\bt^k_{\st},\bt^k_{\sth}\bigr\}\vdash\bt^k$
with cardinality $\big|\bt^k_{\sth}\big|=1$.

On the other hand, the right-hand side of the same equality takes the form
\begin{gather}
\vk_{k+1} \sum_{i=1}^\ell\sum_{\rm part}\Xi^{\ell,k+1}_{i,k+1}(z,\bt) T_{i,k}(z)\cdot \BB\bigl(\bt_{\st}\bigr)+
\sum_{i=1}^\ell\sum_{j=k+1}^n\sum_{\rm part} \Xi^{\ell,k+1}_{i,j}\bigl(z;\bt\bigr)
\big|_{\{\bt^k_{\st},\bt^k_{\sth}\}\vdash\bt^k}\nonumber\\
\qquad\times
\bigl(\vk_{k+1} \alpha_k(\bt^k_{\sth})
\Oml(\bt^k_{\st},\bt^k_{\sth}|\bt^{k-1},\bt^{k+1}_{\st})
-\vk_k \Omr(\bt^k_{\sth},\bt^k_{\st}|\bt^{k-1},\bt^{k+1}_{\st})\bigr)\nonumber\\
\qquad\times
T_{i,j}(z)\cdot\BB\bigl(\bigl\{\bt^s\bigr\}_{1}^{i-1},\bigl\{\bt^s_{\st}\bigr\}_{i}^{\ell-1},\bigl\{\bt^s\bigr\}_{\ell}^{k-1},
\bigl\{\bt^s_{\st}\bigr\}_{k}^{j-1},\bigl\{\bt^s\bigr\}_{j}^{n-1}\bigr) ,\label{Az21}
\end{gather}
where we used the commutation relation \r{Az5} and took into account that
the summation index $i$ in \r{Az21} satisfies the inequalities $1\leq i\leq \ell< k$.
Due to the formulas
\begin{gather*}
\Xi^{\ell,k+1}_{i,k+1}\bigl(z;\bt\bigr)=\Xi^{\ell,k}_{i,k}\bigl(z;\bt\bigr)=g\bigl(z,\bt^{\ell-1}_{\so}\bigr)
\prod_{p=i}^{\ell-1}\Omr\bigl(\bt^{p}_{\so},\bt^{p}_{\st}|\bt^{p-1}_{\st},\bt^{p+1}_{\st}\bigr) ,\\
\Xi^{\ell,k+1}_{i,j}\bigl(z;\bt\bigr)
\big|_{\{\bt^k_{\st},\bt^k_{\sth}\}\vdash\bt^k}=
h\bigl(\bt^{k+1}_{\sth},\bt^k_{\sth}\bigr)
\Xi^{\ell,k+1}_{i,j}\bigl(z;\bigl\{\bt^s\bigr\}_{1}^{k-1},
\bt^k_{\st},\bigl\{\bt^s\bigr\}_{k+1}^{n-1}\bigr),
\\
\Omr\bigl(\bt^{k}_{\sth},\bt^{k}_{\st}\big|\bt^{k-1},\bigl\{\bt^{k+1}_{\st},\bt^{k+1}_{\sth}\bigr\}\bigr) =
h\bigl(\bt^{k+1}_{\sth},\bt^k_{\sth}\bigr)
\Omr\bigl(\bt^{k}_{\sth},\bt^{k}_{\st}\big|\bt^{k-1},\bt^{k+1}_{\st}\bigr) ,
\\
\Oml\bigl(\bt^{k}_{\st},\bt^{k}_{\sth}\big|\bt^{k-1},\bigl\{\bt^{k+1}_{\st},\bt^{k+1}_{\sth}\bigr\}\bigr) =
\frac{1}{g\bigl(\bt^{k+1}_{\sth},\bt^k_{\sth}\bigr)}
\Oml\bigl(\bt^{k}_{\st},\bt^{k}_{\sth}\big|\bt^{k-1},\bt^{k+1}_{\st}\bigr)
\end{gather*}
and the identity for the rational functions
\begin{equation*}
g\bigl(\bt^{k+1}_{\sth},z\bigr)\left(h\bigl(\bt^{k+1}_{\sth},\bt^k_{\sth}\bigr)-
\frac{f\bigl(z,\bt^k_{\sth}\bigr)}{g\bigl(\bt^{k+1}_{\sth},\bt^k_{\sth}\bigr)}\right)=g\bigl(\bt^k_{\sth},z\bigr),
\end{equation*}
in the equality between \r{Az20} and \r{Az21} coefficients at $\vk_k$ cancel each other and coefficients at $\vk_{k+1}$ yield~\r{Az19}.
This finishes the inductive proof of Theorem~\ref{rec-pr}.\qed

{\bf Alternative proof.}
Theorem~\ref{rec-pr} can also be proven using an induction $\ell\to \ell+1$ and
taking as induction base the relation \eqref{Az18}, coming from following alternative lemma.
\begin{lem}\label{r-pr}
The off-shell Bethe vector $\Z_{1}^{k}\cdot \BB\bigl(\bt\bigr)=\BB\bigl(\bigl\{\bu^s\bigr\}_1^{k-1},\bigl\{\bt^{s}\bigr\}_k^{n-1}\bigr)$ for $1<k\leq n$
can be presented in the form
\begin{align}
\Z_{1}^{k}\cdot \BB\bigl(\bt\bigr)={}&
\frac1{\mu_1^k\bigl(z;\bt\bigr)}
\sum_{j=k}^n \sum_{\rm part} g\bigl(\bt^{k}_{\sth},z\bigr)
\prod_{p=k}^{j-1}\alpha_p\bigl(\bt^p_{\sth}\bigr)
\Oml\bigl(\bt^p_{\st},\bt^p_{\sth}|\bt^{p-1}_{\st},\bt^{p+1}_{\st}\bigr)\nonumber\\
&\times T_{1,j}(z)\cdot\BB\bigl(\bigl\{\bt^s\bigr\}_1^{k-1},\bigl\{\bt^s_{\st}\bigr\}_{k}^{j-1},
\bigl\{\bt^s\bigr\}_j^{n-1}\bigr),\label{Az18}
\end{align}
where the sum goes over partitions $ \bigl\{\bt^s_{\st},\bt^s_{\sth}\bigr\}\vdash\bt^s$
with cardinalities $\big|\bt^s_{\sth}\big|=1$ for all $s=k,\dots,j-1$, the set $\bt^{k-1}$ is not partitioned, $\bt^{k-1}_{\st}=\bt^{k-1}$, and $\bt^{n}_{\sth} = \varnothing$.
\end{lem}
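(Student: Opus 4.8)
The plan is to establish \eqref{Az18} by a descending induction on $k$, from $k=n$ down to $k=2$, following the pattern of the proof of Lemma~\ref{l-pr} in Appendix~\ref{ApA} with the roles of the ``left'' and ``right'' data exchanged: the family $\Z_\ell^n$, the zero mode $\sT_{\ell+1,\ell}$ and the entries $T_{i,n}(z)$ used there are replaced respectively by $\Z_1^k$, $\sT_{k+1,k}$ and $T_{1,j}(z)$, and the induction runs in the opposite direction. The base case $k=n$ is immediate: in \eqref{Az18} the sum over $j$ collapses to $j=n$, the product over $p$ is empty, and $\bt^n_{\sth}=\varnothing$ forces $g(\bt^n_{\sth},z)=1$, so \eqref{Az18} degenerates to $\Z_1^n\cdot\BB(\bt)=\mu_1^n(z;\bt)^{-1}\,T_{1,n}(z)\cdot\BB(\bt)$, which is exactly \eqref{Az4}.

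For the step $k+1\to k$ with $k\geq 2$, I would multiply \eqref{Az18} at level $k+1$ by $\mu_1^{k+1}(z;\bt)$ and apply $\sT_{k+1,k}$ to both sides. On the left-hand side, the action formula \eqref{Az6} (with $\ell\to k$) applies to $\mu_1^{k+1}(z;\bt)\,\BB\bigl(\{\bu^s\}_1^k,\{\bt^s\}_{k+1}^{n-1}\bigr)$ and partitions the color-$k$ set $\bu^k=\{\bt^k,z\}$, whose neighbours in this vector are $\bu^{k-1}$ and $\bt^{k+1}$. The partition with $\bt^k_{\so}=\{z\}$ gives the target: since $h(z,z)=1$ and $g(z,z)=\infty$, one has $\Omr\bigl(z,\bt^k|\bu^{k-1},\bt^{k+1}\bigr)=0$ and $\Oml\bigl(\bt^k,z|\bu^{k-1},\bt^{k+1}\bigr)=g(\bt^k,z)\,h(z,\bt^{k-1})/\bigl(h(z,\bt^k)\,g(\bt^{k+1},z)\bigr)$, and combined with $\alpha_k(z)\,\mu_1^{k+1}(z;\bt)\,g(\bt^k,z)\,h(z,\bt^{k-1})/\bigl(h(z,\bt^k)\,g(\bt^{k+1},z)\bigr)=\mu_1^k(z;\bt)$ this produces $\vk_{k+1}\,\mu_1^k(z;\bt)\,\Z_1^k\cdot\BB(\bt)$. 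The partitions with $\bt^k_{\so}\subset\bt^k$ produce terms proportional to $\Z_1^{k+1}\cdot\BB\bigl(\{\bt^s\}_1^{k-1},\bt^k\setminus\bt^k_{\so},\{\bt^s\}_{k+1}^{n-1}\bigr)$, which are rewritten by the induction hypothesis at level $k+1$; this uses the elementary rescalings of $\Oml$, $\Omr$ and $\mu_1^{k+1}$ caused by inserting or deleting the parameter $z$ in a color-$k$ set, of the same type as the auxiliary identities in the proof of Lemma~\ref{l-pr}.

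On the right-hand side, I would commute $\sT_{k+1,k}$ past each $T_{1,j}(z)$ via \eqref{Az5}; since $k\geq 2$ the term proportional to $\delta_{k,1}$ drops and $[\sT_{k+1,k},T_{1,j}(z)]=\vk_{k+1}\delta_{j,k+1}\,T_{1,k}(z)$, so a single extra entry $T_{1,k}(z)$ is generated at $j=k+1$, while the residual action of $\sT_{k+1,k}$ lands on the inner Bethe vectors (whose color-$k$ set is the full $\bt^k$) and is evaluated once more by \eqref{Az6}. Equating the two expressions and separating the coefficients of $\vk_{k+1}$ and $\vk_k$ — legitimate by the $\chi$-homogeneity principle of Remark~\ref{rem: chi homogeneity} — one finds that the coefficients of $\vk_k$ cancel by a rational-function identity of the type \eqref{simp} (now built from $g(\bt^{k+1}_{\sth},z)$, $h(\bt^{k+1}_{\sth},\bt^k_{\sth})$ and $f(z,\bt^k_{\sth})$), while the coefficients of $\vk_{k+1}$ rearrange into exactly \eqref{Az18} at level $k$, after using the extension identities for $\Oml$ and $\Omr$ and the factorization $\mu_1^{k+1}(z;\bt)=h(z,\bt^k_{\sth})\,\mu_1^{k+1}\bigl(z;\{\bt^s\}_1^{k-1},\bt^k_{\st},\{\bt^s\}_{k+1}^{n-1}\bigr)$.

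I expect the principal difficulty to be combinatorial bookkeeping rather than anything conceptual: in each intermediate Bethe vector one must keep precise track of which color sets carry the extra parameter $z$, and one must identify the exact rational-function identity forcing the $\vk_{k+1}$-coefficients on the two sides to agree. As in Lemma~\ref{l-pr}, the clean cancellation of the $\vk_k$-terms serves as the built-in consistency check, and once the rescaling identities for $\Oml$, $\Omr$ and $\mu_1^k$ are collected, the final matching is a finite verification.
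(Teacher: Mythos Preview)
Your approach is correct and is exactly the one the paper has in mind: it explicitly says that the proof of Lemma~\ref{r-pr} is ``similar as above'' (i.e., to Lemma~\ref{l-pr}), and the induction step $k+1\to k$ you outline is precisely the one carried out in the ``End of theorem's proof'' paragraph (there with general $\ell$, here with $\ell=1$). One small correction of bookkeeping: when you equate \eqref{Az20}-type and \eqref{Az21}-type expressions, it is the $\vk_k$-coefficients that cancel \emph{directly} via the extension identities for $\Omr$ and $\Xi$, while the $\vk_{k+1}$-coefficients are the ones that require the mirror of \eqref{simp}, namely $g(\bt^{k+1}_{\sth},z)\bigl(h(\bt^{k+1}_{\sth},\bt^k_{\sth})-f(z,\bt^k_{\sth})/g(\bt^{k+1}_{\sth},\bt^k_{\sth})\bigr)=g(\bt^k_{\sth},z)$, to reorganize into \eqref{Az18} at level $k$; you have these roles interchanged, but this does not affect the validity of the argument.
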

The proofs of Lemma \ref{r-pr} and the end of recursion for Theorem~\ref{rec-pr} are similar as above, and we do not reproduce them in the present paper.

\section[Sketch of the proof of Theorem 4.1]{Sketch of the proof of Theorem~\ref{main-th}}\label{ApB}

The proof of this theorem follows the method
described in Appendix~\ref{ApA}. The starting point is the simple
recurrence relation \r{Bz6} for the Bethe vector \smash{$\BB\bigl(\bigl\{\bw^s\bigr\}_0^{n-1}\bigr)$}.

The recurrence relations \r{main-rr} become the relations \r{B1-1}, \r{B1-2}, and
\r{B1-3} for $n=1$. They were proved in \cite{LPRS19}. To prove Theorem~\ref{main-th}, it is sufficient to consider the cases when $n>1$.

Applying the zero mode operator $\sT_{n,n-1}$ to \r{Bz6}
and using the commutation relations between
the zero modes and the monodromy entries \r{Bz0} as well as the action of the
zero mode operators on the off-shell Bethe vectors~\r{Bz8}, we get
a relation which involves terms proportional to $\chi_n$ and terms proportional to $\chi_{n-1}$. Since the relation involves only Bethe vectors, monodromy matrix entries $T_{ij}(z)$ and eigenvalues $\lambda_i(z)$, due
 to Remark~\ref{rem: chi homogeneity}, the coefficients of these two independent twisting parameters yield
 two recurrence relations, one for the Bethe vectors
\smash{$\BB\bigl(\bigl\{\bw^s\bigr\}_0^{n-2},\bu^{n-1}\bigr)=\Z^n_{-n+1}\cdot\BB\bigl(\bt\bigr)$} and one for the Bethe vector
\smash{$\BB\bigl(\bigl\{\bw^s\bigr\}_0^{n-2},\bv^{n-1}\bigr)=\Z^{n-1}_{-n}\cdot\BB\bigl(\bt\bigr)$}.
These relations are
\begin{gather}
\Z_{-n+1}^n\cdot\BB\bigl(\bt\bigr)=\frac{1}
{\mu_{-n+1}^n\bigl(z;\bt\bigr)} \sum_{i=-n}^{-n+1}\sum_{\rm part}
g\bigl(\bt^{n-1}_{\so},z_{n-1}\bigr) \Omr\bigl(\bt^{n-1}_{\so},\bt^{n-1}_{\st}|\bt^{n-2},\varnothing\bigr)\nonumber
\\
\phantom{\Z_{-n+1}^n\cdot\BB\bigl(\bt\bigr)=}{}\times
T_{i,n}(z)\cdot\BB\bigl(\bigl\{\bt^s\bigr\}_0^{n-2},\bt^{n-1}_{\st}\bigr) ,\label{Bz16}
\\
\Z_{-n}^{n-1}\cdot\BB\bigl(\bt\bigr)=\frac{1}
{\mu_{-n}^{n-1}\bigl(z;\bt\bigr)} \sum_{j=n-1}^n\sum_{\rm part}
g\bigl(\bt^{n-1}_{\sth},z\bigr) \alpha_{n-1}\bigl(\bt^{n-1}_{\sth}\bigr)
\Oml\bigl(\bt^{n-1}_{\st},\bt^{n-1}_{\sth}|\bt^{n-2},\varnothing\bigr)\nonumber\\
\phantom{\Z_{-n+1}^n\cdot\BB\bigl(\bt\bigr)=}{}\times
T_{-n,j}(z)\cdot\BB\bigl(\bigl\{\bt^s\bigr\}_0^{n-2},\bt^{n-1}_{\st}\bigr) ,\label{Bz17}
\end{gather}
where $\big|\bt^{n-1}_{\so}\big|=\Theta(-i-n)$ and $\big|\bt^{n-1}_{\sth}\big|=\Theta(j-n)$.

These recurrence relations
allows to get an inductive proof of the formula \r{main-rr} similarly to the one presented in Appendix \ref{ApA}.
The only differences for the different steps of this inductive proof are in the different ranges for the indices $\ell$ and $k$ that one has to consider,
and for which different identities are needed.

Some of these identities will be different from those used in the proof of
the recurrence relations for~$\mathfrak{gl}_n$ Bethe vectors because of the following
mechanism. To consider the inductive step $k+1\to k$ or~${\ell\to \ell+1}$
in the recurrence relation \r{main-rr} we will apply the zero mode operator
$\sT_{k+1,k}$ or $\sT_{\ell+1,\ell}$ to the inductive assumption
recurrence relations which correspond to the indices $k+1$
and $\ell$ respectively. Since the action of the zero mode operator
$\sT_{k+1,k}$ also parts the set of Bethe parameters
$\bigl\{\bt^k_{\sso},\bt^k_{\sst}\bigr\}\vdash\bt^k$, the partition of $\bt^k$ first by the induction assumption and then by the zero mode action, or vice-versa,
 may lead to different splittings in the left and right hand sides of the resulting recurrence
relation. In the left-hand side the set $\bt^k$ first parts into subsets
$\bigl\{\bt^k_{\sso},\bt^k_{\sst}\bigr\} $ through the action of the zero mode, and then the subset $\bt^k_{\sst}$ is partitioned
into subsets $\bt^k_{\sst}\vdash\bigl\{\bt^k_{\so},\bt^k_{\st}\bigr\}$ according to
the induction assumption. On the other hand, in the right-hand side the set
$\bt^k$ first parts into subsets
\smash{$\bigl\{\bt^k_{\so},\bt^k_{\st}\bigr\} $} through the induction assumption, and then the subset $\bt^k_{\st}$ is partitioned
into subsets $\bigl\{\bt^k_{\sso},\bt^k_{\sst}\bigr\}\vdash\bt^k_{\st}$
by the action of the zero mode.
If the value of the index $j$ is such that according to \r{part-sth}
the subset $\bt^k_{\so}$ is not empty, then the resulting equality after
action of $\sT_{k+1,k}$ should be symmetrized over the subsets
$\bt^k_{\sso}$ and $\bt^k_{\so}$ both having the cardinality 1.
It will make appear the subset $\bt^k_{\sso}\cup\bt^k_{\so}$, which may have cardinality 2, hence
 the cardinality 2 subset
$\bt^k_{\so}$ in the sum over partitions in the recurrence relation \r{main-rr}.
This phenomena does not happen for $\mathfrak{gl}_n$ Bethe vectors
but is present for the inductive proof of the recurrence relations
for the Bethe vectors of other algebra series.

Referring to the calculations presented in Appendix~\ref{ApA} for more details,
we describe below the different ranges of the indices $\ell$, $k$ and the
corresponding identities which should be used at each of the inductive step.
Let us divide the whole domain $-n\leq \ell< k\leq n$
of the values of the indices $\ell$ and $k$
 into three subdomains:
$1\leq-\ell,k\leq n$, $\ 0\leq \ell< k\leq n$, and
$-n\leq \ell< k\leq 0$.

For the first subdomain $1\leq-\ell, k\leq n$, the calculations
can be performed as a sequence of the
following steps
\begin{itemize}\itemsep=0pt
\item $\Z^n_{-n}\cdot\BB\bigl(\bt\bigr)\to \Z^n_{-n+1}\cdot\BB\bigl(\bt\bigr)$
and $\Z^n_{-n}\cdot\BB\bigl(\bt\bigr)\to \Z^{n-1}_{-n}\cdot\BB\bigl(\bt\bigr)$.
The recurrence relations for the Bethe~vectors
\smash{$\BB\bigl(\bigl\{\bw^s\bigr\}_0^{n-2},\bu^{n-1}\bigr)=\Z^n_{-n+1}\cdot\BB\bigl(\bt\bigr)$}
and
\smash{$\BB\bigl(\bigl\{\bw^s\bigr\}_0^{n-2},\bv^{n-1}\bigr)=\Z^{n-1}_{-n}\cdot\BB\bigl(\bt\bigr)$}
are given by the formulas \r{Bz16} and \r{Bz17} above.

\item $\Z^n_{-n+1}\cdot\BB\bigl(\bt\bigr)\to \Z^n_\ell\cdot\BB\bigl(\bt\bigr)$, $1\leq -\ell\leq n-1$.
The recurrence relations \r{main-rr} is proved in these cases
by induction over $\ell$ starting from the recurrence relation \r{Bz16}.
In this case, we will not need to perform the symmetrization described above
and the only identity which will be necessary to consider these cases is the
simple identity \r{simp}.

\item $\Z^n_{\ell}\cdot\BB\bigl(\bt\bigr)\to \Z^k_\ell\cdot\BB\bigl(\bt\bigr)$, $-\ell\leq k\leq n-1$.
These cases are proved by induction over $k$ starting from the
recurrence relation for the Bethe vector $\Z^n_{\ell}\cdot\BB\bigl(\bt\bigr)$
which is already proved at the previous step. This proof will
require to use the simple identity \r{simp} and a more complicated
identity
\begin{equation}\label{ApB1}
\mathop{\operatorname{ Sym}}_{y_1,y_2}
\sk{\frac{h(q,y_1)}{g(y_1,x)} \frac{g(y_1,y_2)}{h(y_2,y_1)}-
\frac{h(y_1,x)}{g(q,y_1)} \frac{g(y_2,y_1)}{h(y_1,y_2)}}=0,
\end{equation}
where for any expression $E(y_1,y_2)$ we define
\[
\mathop{\operatorname{ Sym}}_{y_1,y_2} E(y_1,y_2)
=E(y_1,y_2)+E(y_2,y_1) .
\]
\item $\Z^{n-1}_{-n}\cdot\BB\bigl(\bt\bigr)\to \Z^k_{-n}\cdot\BB\bigl(\bt\bigr)$, $1\leq k\leq n-1$.
The induction proof of the recurrence
relations will require only the simple identity \r{simp}.

\item $\Z^k_{-n}\cdot\BB\bigl(\bt\bigr)\to \Z^k_{\ell}\cdot\BB\bigl(\bt\bigr)$, $k\leq -\ell \leq n-1$.
 For these cases, the starting point will be the recurrence relation for
 the Bethe vector $\Z^k_{-n}\cdot\BB\bigl(\bt\bigr)$ proved at the
 previous step. We will need
 for these cases the identities \r{simp} and \r{ApB1}.
\end{itemize}

For the second domain $0\leq l< k\leq n$, the
 most simple way to prove the recurrence relations~\r{main-rr}
can be depicted as the sequence
of the following steps.

\begin{itemize}\itemsep=0pt
\item The first step of the induction $\Z^n_{-1}\cdot\BB\bigl(\bt\bigr)\to \Z^n_0\cdot\BB\bigl(\bt\bigr)$
is simple. It will not require any complicated rational functions identities,
but fixes the function $\psi_0\bigl(z;\bt\bigr)$ in \r{psi}.
 \item The next step $\Z^n_{0}\cdot\BB\bigl(\bt\bigr)\to \Z^n_1\cdot\BB\bigl(\bt\bigr)$ is also particular.
It is the first step for which partitions with~${|\bar t^0_{\so}|=2}$ appear.
 To get the recurrence relation, we use the identities
\begin{equation}\label{ApB0}
 f(t,z)-\frac{f(z,t)}{f(z_1,t)}=-g(z_0,t)
 \end{equation}
 and
\begin{equation}\label{ApB4}
 \mathop{\operatorname{ Sym}}_{y_1,y_2}\left(\frac{g(y_2,z_0)}{g(x,y_1)}
 \left(
 \fgo(y_1,y_2) f(x,y_1) \frac{f(z,y_1)}{f(z_1,y_1)} - \fgo(y_2,y_1)
 \right)\right)= g(z,\bar y) h(x,z) ,
 \end{equation}
 where $\bar y=\{y_1,y_2\}$.\item Then, to prove $\Z^n_\ell\cdot\BB\bigl(\bt\bigr)$, $\ell>1$, starting from $\Z^n_1\cdot\BB\bigl(\bt\bigr)$,
 we use the identities
\begin{equation}\label{ApB2}
g(z,\bar x)\sk{h(t,\bar x)-\frac{f(t,z)}
{g(t,\bar x)}-h(t,z)}=g(z,t)
\end{equation}
and
\begin{equation}\label{ApB3}
\mathop{\operatorname{ Sym}}_{y_1,y_2}\sk{h(y_2,z)
\sk{\frac{h(y_1,\bar x)}{g(q,y_1)} \frac{g(y_2,y_1)}{h(y_1,y_2)}
- f(y_1,z)
\frac{h(q,y_1)}{g(y_1,\bar x)} \frac{g(y_1,y_2)}{h(y_2,y_1)}}}=
\frac{g(z,\bar y)h(q,z)}{g(z,\bar x)} ,
\end{equation}
where $\bar x=\{x_1,x_2\}$ and $\bar y=\{y_1,y_2\}$.
\item For the steps $\Z^n_{\ell}\cdot\BB\bigl(\bt\bigr)\to \Z^k_\ell\cdot\BB\bigl(\bt\bigr)$, $0\leq \ell<k\leq n$,
besides the simple identity \r{simp}, the identity~\r{ApB1} should be used.
\end{itemize}

Finally, for the third domain $-n\leq l< k\leq 0$, the recurrence relations \r{main-rr}
 is proved in several steps, involving the two particular cases corresponding to
$k=0$ and $k=-1$.
Among the recurrence relations corresponding to this domain there are the
 so called shifted recurrence relations, when
the sets of Bethe parameters $\bt^s$ are extended by the shifted parameter
$z_s=z-c(s-1/2)$.

\begin{itemize}\itemsep=0pt
\item $\Z^1_{-n}\cdot\BB\bigl(\bt\bigr)\to \Z^0_{-n}\cdot\BB\bigl(\bt\bigr)$.
This step is simple. No complicated identities should be used, but it fixes the
function $\phi_0\bigl(z;\bt\bigr)$ in \r{phi}.
\item $\Z^0_{-n}\cdot\BB\bigl(\bt\bigr)\to \Z^{-1}_{-n}\cdot\BB\bigl(\bt\bigr)$.
Here the proof relies on the identities
\r{ApB0} and an identity equivalent to \r{ApB4}.
 \item For $\Z^{-1}_{-n}\cdot\BB\bigl(\bt\bigr)\to \Z^k_{-n}\cdot\BB\bigl(\bt\bigr)$, $-n<k< -1$,
 we need identities which appear to be equivalent to~\r{ApB2} and \r{ApB3}.
\item $\Z^k_{-n}\cdot\BB\bigl(\bt\bigr)\to \Z^k_{\ell}\cdot\BB\bigl(\bt\bigr)$, $-n\leq \ell< k\leq 0$.
This final step will require identities \r{ApB1}.
\end{itemize}

\subsection*{Acknowledgements}
We are grateful to Alexander Molev for fruitful discussions on embeddings in Yangian algebras.
We would like to acknowledge the anonymous referees for their numerous relevant remarks, which contributed to improving the paper.

S.P.~acknowledges the support of the PAUSE Programme and hospitality at LAPTh where
this work was done.
The research of A.L.\ was supported by Beijing Natural Science Foundation (IS24006) and Beijing Talent Program.
A.L.\ is also grateful to the CNRS PHYSIQUE for support during his visit to
Annecy in the course of this investigation.



\pdfbookmark[1]{References}{ref}
\LastPageEnding


\begin{thebibliography}{99}
\footnotesize\itemsep=0pt

\bibitem{BPRS12}
Belliard S., Pakuliak S., Ragoucy E., Slavnov N.A., The algebraic {B}ethe
 ansatz for scalar products in {${\rm SU}(3)$}-invariant integrable models,
 \href{https://doi.org/10.1088/1742-5468/2012/10/p10017}{\textit{J.~Stat.
 Mech. Theory Exp.}} \textbf{2012} (2012), P10017, 25~pages,
 \href{http://arxiv.org/abs/1207.0956}{arXiv:1207.0956}.

\bibitem{BPRS13}
Belliard S., Pakuliak S., Ragoucy E., Slavnov N.A., Bethe vectors of {${\rm
 GL}(3)$}-invariant integrable models,
 \href{https://doi.org/10.1088/1742-5468/2013/02/p02020}{\textit{J.~Stat.
 Mech. Theory Exp.}} \textbf{2013} (2013), P02020, 24~pages,
 \href{http://arxiv.org/abs/1210.0768}{arXiv:1210.0768}.

\bibitem{Drinfeld}
Drinfeld V.G., Quantum groups,
 \href{https://doi.org/10.1007/BF01247086}{\textit{J.~Sov. Math.}} \textbf{41}
 (1988), 898--915.

\bibitem{Fad96}
Faddeev L.D., How the algebraic {B}ethe ansatz works for integrable models, in
 Sym\'{e}tries {Q}uantiques ({L}es {H}ouches, 1995), North-Holland, Amsterdam,
 1998, 149--219,
 \href{http://arxiv.org/abs/hep-th/9605187}{arXiv:hep-th/9605187}.

\bibitem{GelfandRetakh}
Gelfand I., Retakh V., Quasideterminants.~{I},
 \href{https://doi.org/10.1007/s000290050019}{\textit{Selecta Math. (N.S.)}}
 \textbf{3} (1997), 517--546.

\bibitem{G23}
Gombor T., Exact overlaps for all integrable two-site boundary states
 of~{$\mathfrak{gl}(N)$} symmetric spin chains,
 \href{https://doi.org/10.1007/jhep05(2024)194}{\textit{J.~High Energy Phys.}}
 \textbf{2024} (2024), no.~5, 194, 98~pages,
 \href{http://arxiv.org/abs/2311.04870}{arXiv:2311.04870}.

\bibitem{GLMR20}
Gromov N., Levkovich-Maslyuk F., Ryan P., Determinant form of correlators in
 high rank integrable spin chains via separation of variables,
 \href{https://doi.org/10.1007/jhep05(2021)169}{\textit{J.~High Energy Phys.}}
 \textbf{2021} (2021), no.~5, 169, 79~pages,
 \href{http://arxiv.org/abs/2011.08229}{arXiv:2011.08229}.

\bibitem{GLMS17}
Gromov N., Levkovich-Maslyuk F., Sizov G., New construction of eigenstates and
 separation of variables for~{${\rm SU}(N)$} quantum spin chains,
 \href{https://doi.org/10.1007/jhep09(2017)111}{\textit{J.~High Energy Phys.}}
 \textbf{2017} (2017), no.~9, 111, 39~pages,
 \href{http://arxiv.org/abs/1610.08032}{arXiv:1610.08032}.

\bibitem{HLPRS17}
Hutsalyuk A., Liashyk A., Pakuliak S.Z., Ragoucy E., Slavnov N.A., Current
 presentation for the double super-Yangian~${\rm DY}(\mathfrak{gl}(m|n))$ and Bethe
 vectors, \href{https://doi.org/10.1070/RM9754}{\textit{Russian Math.
 Surveys}} \textbf{72} (2017), 33--99,
 \href{http://arxiv.org/abs/1611.09620}{arXiv:1611.09620}.

\bibitem{HLPRS16}
Hutsalyuk A., Liashyk A., Pakuliak S.Z., Ragoucy E., Slavnov N.A., Scalar
 products of {B}ethe vectors in models with~{$\mathfrak{gl}(2|1)$} symmetry 2.
 {D}eterminant representation,
 \href{https://doi.org/10.1088/1751-8121/50/3/034004}{\textit{J.~Phys.~A}}
 \textbf{50} (2017), 034004, 22~pages,
 \href{http://arxiv.org/abs/1606.03573}{arXiv:1606.03573}.

\bibitem{HLPRS-SP17}
Hutsalyuk A., Liashyk A., Pakuliak S.Z., Ragoucy E., Slavnov N.A., Scalar
 products of {B}ethe vectors in the models with~$\mathfrak{gl}(m|n)$ symmetry,
 \href{https://doi.org/10.1016/j.nuclphysb.2017.07.020}{\textit{Nuclear
 Phys.~B}} \textbf{923} (2017), 277--311,
 \href{http://arxiv.org/abs/1704.08173}{arXiv:1704.08173}.

\bibitem{HLPRS-SP18}
Hutsalyuk A., Liashyk A., Pakuliak S.Z., Ragoucy E., Slavnov N.A., Norm of
 {B}ethe vectors in models with~{$\mathfrak{gl}(m|n)$} symmetry,
 \href{https://doi.org/10.1016/j.nuclphysb.2017.11.006}{\textit{Nuclear
 Phys.~B}} \textbf{926} (2018), 256--278,
 \href{http://arxiv.org/abs/1705.09219}{arXiv:1705.09219}.

\bibitem{HLPRS17b}
Hutsalyuk A., Liashyk A., Pakuliak S.Z., Ragoucy E., Slavnov N.A., Actions of
 the monodromy matrix elements onto~{$\mathfrak{gl}(m|n)$}-invariant {B}ethe
 vectors, \href{https://doi.org/10.1088/1742-5468/abacb2}{\textit{J.~Stat.
 Mech. Theory Exp.}} \textbf{2020} (2020), 093104, 31~pages,
 \href{http://arxiv.org/abs/2005.09249}{arXiv:2005.09249}.

\bibitem{IzeK84}
Izergin A.G., Korepin V.E., The quantum inverse scattering method approach to
 correlation functions,
 \href{https://doi.org/10.1007/BF01212350}{\textit{Comm. Math. Phys.}}
 \textbf{94} (1984), 67--92.

\bibitem{JLM18}
Jing N., Liu M., Molev A., Isomorphism between the {$R$}-matrix and {D}rinfeld
 presentations of {Y}angian in types~{$B$}, {$C$} and~{$D$},
 \href{https://doi.org/10.1007/s00220-018-3185-x}{\textit{Comm. Math. Phys.}}
 \textbf{361} (2018), 827--872,
 \href{http://arxiv.org/abs/1705.08155}{arXiv:1705.08155}.

\bibitem{KhP-Kyoto}
Khoroshkin S., Pakuliak S., A computation of universal weight function for
 quantum affine algebra~{$U_q(\widehat{\mathfrak {gl}}_N)$},
 \href{https://doi.org/10.1215/kjm/1250271413}{\textit{J.~Math. Kyoto Univ.}}
 \textbf{48} (2008), 277--321,
 \href{http://arxiv.org/abs/0711.2819}{arXiv:0711.2819}.

\bibitem{Kor82}
Korepin V.E., Calculation of norms of {B}ethe wave functions,
 \href{https://doi.org/10.1007/BF01212176}{\textit{Comm. Math. Phys.}}
 \textbf{86} (1982), 391--418.

\bibitem{KT1}
Kosmakov M., Tarasov V., New combinatorial formulae for nested {B}ethe vectors,
 \href{https://doi.org/10.3842/SIGMA.2025.060}{\textit{SIGMA}} \textbf{21}
 (2025), 060, 28~pages,
 \href{http://arxiv.org/abs/2312.00980}{arXiv:2312.00980}.

\bibitem{KT2}
Kosmakov M., Tarasov V., New combinatorial formulae for nested {B}ethe
 vectors~{II}, \href{https://doi.org/10.1007/s11005-025-01896-2}{\textit{Lett.
 Math. Phys.}} \textbf{115} (2025), 12, 20~pages,
 \href{http://arxiv.org/abs/2402.15717}{arXiv:2402.15717}.

\bibitem{KR1}
Kulish P.P., Reshetikhin N.Yu., On {${\rm GL}\sb{3}$}-invariant solutions of the
 {Y}ang--{B}axter equation and associated quantum systems,
 \href{https://doi.org/10.1007/BF01095104}{\textit{J.~Sov. Math.}} \textbf{34}
 (1982), 1948--1971.

\bibitem{KR2}
Kulish P.P., Reshetikhin N.Yu., Diagonalisation of~{${\rm GL}(N)$} invariant
 transfer matrices and quantum {$N$}-wave system ({L}ee model),
 \href{https://doi.org/10.1088/0305-4470/16/16/001}{\textit{J.~Phys.~A}}
 \textbf{16} (1983), L591--L596.

\bibitem{LP1}
Liashyk A., Pakuliak S., Gauss coordinates vs currents for the {Y}angian
 doubles of the classical types,
 \href{https://doi.org/10.3842/SIGMA.2020.120}{\textit{SIGMA}} \textbf{16}
 (2020), 120, 23~pages,
 \href{http://arxiv.org/abs/2006.01579}{arXiv:2006.01579}.

\bibitem{LP2}
Liashyk A., Pakuliak S., Algebraic {B}ethe ansatz for {$\mathfrak
 o_{2n+1}$}-invariant integrable models,
 \href{https://doi.org/10.1134/S0040577921010025}{\textit{Theoret. and Math.
 Phys.}} \textbf{206} (2021), 19--39,
 \href{http://arxiv.org/abs/2008.03664}{arXiv:2008.03664}.

\bibitem{LP22}
Liashyk A., Pakuliak S., Recurrence relations for off-shell {B}ethe vectors in
 trigonometric integrable models,
 \href{https://doi.org/10.1088/1751-8121/ac491b}{\textit{J.~Phys.~A}}
 \textbf{55} (2022), 075201, 23~pages,
 \href{http://arxiv.org/abs/2109.07528}{arXiv:2109.07528}.

\bibitem{LPR25}
Liashyk A., Pakuliak S., Ragoucy E., Scalar products and norm of {B}ethe
 vectors in~{$\mathfrak{o}_{2n+1}$} invariant integrable models,
 \href{https://doi.org/10.21468/SciPostPhys.19.1.023}{\textit{SciPost Phys.}}
 \textbf{19} (2025), 023, 38~pages,
 \href{http://arxiv.org/abs/2503.01578}{arXiv:2503.01578}.

\bibitem{LPRS19}
Liashyk A., Pakuliak S., Ragoucy E., Slavnov N.A., Bethe vectors for orthogonal
 integrable models,
 \href{https://doi.org/10.1134/S0040577919110023}{\textit{Theoret. and Math.
 Phys.}} \textbf{201} (2019), 1545--1564,
 \href{http://arxiv.org/abs/1906.03202}{arXiv:1906.03202}.

\bibitem{LPRS-New}
Liashyk A., Pakuliak S., Ragoucy E., Slavnov N.A., New symmetries of
 {$\mathfrak{gl}(N)$}-invariant {B}ethe vectors,
 \href{https://doi.org/10.1088/1742-5468/ab02f0}{\textit{J.~Stat. Mech. Theory
 Exp.}} \textbf{2019} (2019), 044001, 24~pages,
 \href{http://arxiv.org/abs/1810.00364}{arXiv:1810.00364}.

\bibitem{MN18}
Maillet J.M., Niccoli G., On quantum separation of variables,
 \href{https://doi.org/10.1063/1.5050989}{\textit{J.~Math. Phys.}} \textbf{59}
 (2018), 091417, 47~pages,
 \href{http://arxiv.org/abs/1807.11572}{arXiv:1807.11572}.

\bibitem{MNV20}
Maillet J.M., Niccoli G., Vignoli L., On scalar products in higher rank quantum
 separation of variables,
 \href{https://doi.org/10.21468/SciPostPhys.9.6.086}{\textit{SciPost Phys.}}
 \textbf{9} (2020), 086, 64~pages,
 \href{http://arxiv.org/abs/2003.04281}{arXiv:2003.04281}.

\bibitem{Molev}
Molev A., Yangians and classical {L}ie algebras, \textit{Math. Surveys
 Monogr.}, Vol.~143, \href{https://doi.org/10.1090/surv/143}{American
 Mathematical Society}, Providence, RI, 2007.

\bibitem{Res86}
Reshetikhin N.Yu., Calculation of the norm of {B}ethe vectors in models
 with~{${\rm SU}(3)$} symmetry,
 \href{https://doi.org/10.1007/BF01099200}{\textit{J.~Sov. Math.}} \textbf{46}
 (1989), 1694--1706.

\bibitem{Res91}
Reshetikhin N.Yu., Algebraic {B}ethe ansatz for {${\rm SO}(N)$}-invariant
 transfer matrices, \href{https://doi.org/10.1007/BF01101125}{\textit{J.~Sov.
 Math.}} \textbf{54} (1991), 940--951.

\bibitem{FRT}
Reshetikhin N.Yu., Takhtadzhyan L.A., Faddeev L.D., Quantization of {L}ie groups
 and {L}ie algebras, \textit{Leningrad Math.~J.} \textbf{1} (1990), 193--225.

\bibitem{RV21}
Ryan P., Volin D., Separation of variables for rational~{$\mathfrak{gl}(n)$}
 spin chains in any compact representation, via fusion, embedding morphism and
 {B}\"acklund flow,
 \href{https://doi.org/10.1007/s00220-021-03990-7}{\textit{Comm. Math. Phys.}}
 \textbf{383} (2021), 311--343,
 \href{http://arxiv.org/abs/2002.12341}{arXiv:2002.12341}.

\bibitem{Sk90}
Sklyanin E.K., Functional {B}ethe ansatz, in Integrable and {S}uperintegrable
 {S}ystems, \href{https://doi.org/10.1142/9789812797179_0002}{World Scientific
 Publishing}, Teaneck, NJ, 1990, 8--33.

\bibitem{Sk95}
Sklyanin E.K., Separation of variables~-- new trends,
 \href{https://doi.org/10.1143/PTPS.118.35}{\textit{Progr. Theoret. Phys.
 Suppl.}} \textbf{118} (1995), 35--60,
 \href{http://arxiv.org/abs/solv-int/9504001}{arXiv:solv-int/9504001}.

\bibitem{S-Det89}
Slavnov N.A., Calculation of scalar products of wave functions and form-factors
 in the framework of the algebraic {B}ethe ansatz,
 \href{https://doi.org/10.1007/BF01016531}{\textit{Theoret. and Math. Phys.}}
 \textbf{79} (1989), 502--508.

\bibitem{Yang}
Yang C.N., Some exact results for the many-body problem in one dimension with
 repulsive delta-function interaction,
 \href{https://doi.org/10.1103/PhysRevLett.19.1312}{\textit{Phys. Rev. Lett.}}
 \textbf{19} (1967), 1312--1315.

\bibitem{ZZ79}
Zamolodchikov A.B., Zamolodchikov A.B., Factorized {$S$}-matrices in two
 dimensions as the exact solutions of certain relativistic quantum field
 theory models,
 \href{https://doi.org/10.1016/0003-4916(79)90391-9}{\textit{Ann. Physics}}
 \textbf{120} (1979), 253--291.

\end{thebibliography}
\end{document}